\documentclass[reqno, 11pt]{amsart}
\usepackage{amsmath, amsfonts, amsthm, amssymb, stmaryrd, color, cite, mathrsfs}

\usepackage{hyperref}

\newtheorem{theorem}{Theorem}[section]
\newtheorem{lemma}{Lemma}[section]
\newtheorem{proposition}{Proposition}[section]
\newtheorem{corollary}{Corollary}[section]

\theoremstyle{definition}
\newtheorem{definition}{Definition}[section]

\theoremstyle{remark}

\numberwithin{equation}{section}
\allowdisplaybreaks

\newcommand{\R}{{\mathbb R}}

\def\f{\frac}

\def\hf1{^\f{1}{1-\xi^2}}

\def\be{\begin{equation}}
\def\ee{\end{equation}}
\def\bs{\begin{split}}
\def\es{\end{split}}
\def\ba{\begin{align}}
\def\ea{\end{align}}

\title[stochastic Landau-Lifshitz-Bloch equation]
{
stability of invariant measures of the
stochastic 
 Landau-Lifshitz-Bloch equation 
 with vanishing noise}

\author[Z. Qiu]{Zhaoyang Qiu}
\address{School of Applied Mathematics, Nanjing University of Finance and Economics, Nanjing, 210046, China.}
\email{zhqmath@163.com}

\author[D. Huang]{Daiwen Huang}
\address{Institute of Applied Physics and Computational Mathematics, Beijing 100088,  China}
\email{huang\_daiwen@iapcm.ac.cn}

\author[B. Wang]{Bixiang Wang}
\address{Department of Mathematics, New Mexico Institute of Mining and Technology, Socorro, NM 87801, USA.}
\email{bwang@nmt.edu.}

\keywords{stochastic Landau-Lifshitz-Bloch equation, 2D unbounded domains, well-posedness, invariant measure, limiting behavior, tail-ends estimates, artificial viscosity}
\subjclass[2020]{35Q35, 76D05, 35R60, 60F10}

\date{\today}

\begin{document}
\baselineskip=1.2\baselineskip
\begin{abstract}
In this paper, 
we investigate the limiting dynamics of
invariant measures
of
the  
stochastic 
Landau-Lifshitz-Bloch equation 
driven by  the Stratonovich noise
defined on the entire space $\R^2$.
We first prove the set of all invariant measures
of the stochastic equation for small noise is tight
in $H^1(\R^2)$,  and  then prove every limit
of a sequence of invariant measures
of the stochastic equation
must be an invariant measure of the  
limiting system
as  the noise intensity approaches zero.
The main difficulty of the paper is to establish
the tightness of solutions which is caused by the
low regularity of solutions and the non-compactness
of Sobolev embeddings  on   unbounded domains.
To solve the problem, we  first consider a
family of  higher-order
perturbed  viscous  systems  and then use the regularity
as well as the uniform  tail-ends estimates
of  the perturbed solutions to
establish the tightness of solutions
of the original equation by a  limiting
process.
 \end{abstract}

\maketitle
 \section{Introduction}

{\subsection{Physical Backgrounds} In ferromagnetic materials, the Landau-Lifshitz-Gilbert (LLG) equation and the Landau-Lifshitz-Bloch (LLB) equation are two fundamental models describing the dynamics of magnetization. The LLG equation governs the precessional motion and damping of a unit magnetization vector under an effective magnetic field, assuming that the modulus of the magnetization remains constant throughout the evolution, see  \cite{B5, gok}. It is widely used in micromagnetic simulations at low temperatures. However, this assumption becomes invalid at elevated temperatures, especially near or above the Curie temperature, where thermal fluctuations induce variations in the magnetization amplitude.
In order to describe the time evolution of magnetic dynamics near or above the Curie temperature  when the thermal fluctuations significantly affect the spin alignment,  the 
 LLB equation 
 is used to take
  place of the LLG equation. Originally derived by Garanin \cite{gar} from a classical Fokker-Planck formalism for spin ensembles, the LLB equation serves as a thermodynamically consistent extension of the well-known LLG equation. Unlike the LLG model, which preserves the modulus of the magnetization and is valid only at low temperatures, the LLB equation incorporates both transverse and longitudinal relaxation mechanisms, thereby capturing the temperature-induced changes in the magnitude of the magnetization vector near or above the Curie temperature,  see  also \cite{A1,A2,una,ga} for more physical backgrounds.} This equation reads as
$$\frac{\partial \mathbf{u}}{\partial t}
=\gamma \mathbf{u}\times \mathrm{H}_{e\!f\!f}
- \frac{\gamma\alpha_1}{|\mathbf{u}|^2}\mathbf{u}\times (\mathbf{u}\times \mathrm{H}_{e\!f\!f})
+ \frac{\gamma\alpha_2}{|\mathbf{u}|^2}(\mathbf{u}\cdot\mathrm{H}_{e\!f\!f})\mathbf{u},$$
where  $\mathbf{u}=({u}_1, {u}_2, {u}_3)$ is the average spin polarization, $\gamma>0$ represents the gyromagnetic ratio. The notation $\times$ denotes the vector cross product,
and $|\cdot|$ is the Euclidean norm of $\mathbb{R}^3$. $\alpha_1, \alpha_2>0$ are damping parameters which
govern transverse damping (precession-like behavior) and longitudinal relaxation (magnetization length changes) respectively. $\mathrm{H}_{e\!f\!f}$  is  the effective field with the form
$$\mathrm{H}_{e\!f\!f}=\Delta\mathbf{u}-\frac{1}{\alpha_2}(1+\kappa|\mathbf{u}|^2)\mathbf{u},$$
where 
$\kappa>0$ is a constant related to the Curie temperature, which includes contributions from the external field, exchange field, anisotropy field, and thermal fluctuations.
Without loss of generality,
 we will assume $\kappa=\gamma=\alpha_1=\alpha_2=1$.
Then the LLB equation can be written more precisely as
\begin{align}\label{e3}\frac{\partial \mathbf{u}}{\partial t}= \mathbf{u}\times \mathrm{H}_{e\!f\!f}+ \mathrm{H}_{e\!f\!f},\end{align}
see \cite{B1, W1} for detailed calculations.  At high temperatures, the study of
the dispersion of individual trajectories is important.
 For example, when the magnetization is quenched it should describe
the loss of magnetization correlations in different sites of the
sample. In the laser-induced dynamic this is responsible for
the slowing down of the magnetization recovery at high laser
fluency as the system temperature decreases. Therefore,
under these circumstances an application of the stochastic version is significant, which was studied originally by N\'{e}el \cite{N1}.
Since the effective field $\mathrm{H}_{e\!f\!f}$ includes the external force,
  the random fluctuation\
  is introduced
   into the equation by perturbing the effective field  $\mathrm{H}_{e\!f\!f}$ using the Gaussian noise, see \cite{eva,ga1}. Thus, $\mathrm{H}_{e\!f\!f}+
   \frac{\circ dW}{dt}$ takes place of $\mathrm{H}_{e\!f\!f}$
 with $W =    \varepsilon
 \sum_{k=1}^\infty \mathbf{f}_kd W_k$,\
where $    \varepsilon>0$ is the noise intensity,
 $\{W_k\}_{k\geq 1}$ is a sequence  of independent
 standard  real-valued Wiener processes defined in a  complete
   filtered probability space $(\Omega,\mathcal{F}, \{\mathcal{F}_t\}_{t\geq 0}, \mathrm{P})$,
   and 
   $\{\mathbf{f}_k\}_{k= 1}^\infty$ is a sequence of functions satisfying
\begin{align}\label{1.1}
\sum_{k =1}^\infty
\|\mathbf{f}_k\| _{W^{1,\infty}(\mathbb{R}^2, ~\mathbb{R}^3
)\cap H^1(\mathbb{R}^2, ~\mathbb{R}^3)} <\infty.
\end{align}
The symbol 
$\circ dW$ means that  the noise 
is given in the sense of
the Stratonovich differential.
  Then  the stochastic version of 
  the LLB equation   on  $\mathbb{R}^2$ 
  is given by
\begin{equation}\label{1.3}
d\mathbf{u}=\Delta \mathbf{u} dt+ \mathbf{u}\times \Delta \mathbf{u} dt-(1+|\mathbf{u}|^{2})\mathbf{u} dt+\varepsilon\sum_{k=1}^\infty(\mathbf{u} \times \mathbf{f}_k+\mathbf{f}_k)\circ d W_k,
\end{equation}
with initial data $\mathbf{u}_0$.
Note that the  Stratonovich
stochastic equation \eqref{1.3}
is equivalent to the following It\^{o}
stochastic equation  on  $\mathbb{R}^2$:
\begin{equation}\label{wang1}
d\mathbf{u}=\Delta \mathbf{u} dt+ \mathbf{u}\times \Delta \mathbf{u} dt-(1+|\mathbf{u}|^{2})\mathbf{u} dt
+
\frac{\varepsilon^2}{2}
\sum_{k=1}^\infty(\mathbf{u} \times \mathbf{f}_k)\times \mathbf{f}_kdt
+\varepsilon\sum_{k=1}^\infty(\mathbf{u} \times \mathbf{f}_k+\mathbf{f}_k) d W_k.
\end{equation}

 In this paper, we 
 will investigate
 the stability of the dynamics of the
 stochastic LLB equation  with vanishing 
 noise; that is,   
  the limiting behavior of invariant measures of
  \eqref{1.3}  (or \eqref{wang1})   as $\varepsilon
  \to 0$. 
  Without loss of generality, 
  we 
  assume $\varepsilon
  \in [0,1]$ from now on. 
  For the exisence and
  the  limiting behavior
  of invariant measures  of SPDEs, the reader is referred to \cite{bmo, car2, car3, bx3, cb, bx2,kuk1,k1,k3, JH,F1,m1,b, car} and the references therein.

\subsection{Main result}
In this subsection, we present the main
result of the paper on the limiting behavior
of invariant measures of \eqref{1.3}.
To that end, we first recall the
existence and uniqueness of solutions
of  \eqref{1.3} from \cite{Q1}:  for every
    $\mathbf{u}_0\in H^1(\mathbb{R}^2)$
    and  $\varepsilon\in [0,1]$,
    the stochastic  equation \eqref{1.3} admits a unique     solution $\mathbf{u}^\varepsilon$ 
    which is a  
weakly continuous,   $H^1(\mathbb{R}^2)$-valued,
  $\mathcal{F}_t$-adapted process such that
for all  $T>0$ and $p\ge 1$,
\begin{align}\label{wang2}
\mathbf{u}^\varepsilon
\in L^p(\Omega; L^\infty(0,T; H^1(\mathbb{R}^2))\cap L^2(0,T;H^2(\mathbb{R}^2) )), 
    \end{align}
    and
    \be\label{wang3}
    \mathbf{u}^\varepsilon \times
    \Delta \mathbf{u}^\varepsilon
    \in
L^{p}(\Omega; L^r(0,T; L^2
(\mathbb{R}^2
))),
    \quad
    \forall   \ r\in[1,2).
    \ee
Furthermore, it follows from
  \cite{Q1} that
  for every $\varepsilon>0$,
  the stochastic equation \eqref{1.3}
  has at least one 
  invariant measure $\mu^\varepsilon$
  on $H^1
  (\R^2)$.
  In the sequel, 
 we denote by   $\mathcal{I}^\varepsilon$
 the set  of all   invariant measures of \eqref{1.3}
 in $H^1
  (\R^2)$
 corresponding to  $\varepsilon$.  
 
 The main result of this paper is presented  below,
   which 
   is concerned with  the asymptotic stability of invariant measures of  \eqref{1.3}
  with respect to  $\varepsilon$.
   
 \begin{theorem}\label{th1}
If \eqref{1.1} holds, then:

{\em (i)}.  The set  $\bigcup_{\varepsilon
\in [0,1]} \mathcal{I}^\varepsilon$
is tight in $H^1(\mathbb{R}^2)$. 

{\rm (ii)}.  Every limit of  a sequence of invariant measures
of \eqref{1.3} must be an invariant measure
of the limiting system; that is,
if  $\mu$ is a probability
measure on $H^1
(\R^2)$,
$\varepsilon_n \to \varepsilon_0\in [0,1]$
 and
$\mu^{\varepsilon_n}
\in \mathcal{I}^{\varepsilon_n}$ such that
$
\mu^{\varepsilon_{n}}
\to
\mu $ weakly, then  
$\mu \in  \mathcal{I}^{\varepsilon_0}$.
 \end{theorem}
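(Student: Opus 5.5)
The plan follows the standard scheme for limits of invariant measures: uniform moment bounds, uniform tail-ends estimates, and a compactness argument that produces tightness. This is followed by continuity of the transition semigroups in $\varepsilon$ and in the initial data, which passes invariance to the limit. The key difficulty is that $H^1$ solutions have no uniform $H^2$-type bound in $L^\infty_t$, so we cannot obtain tightness in $H^1(\R^2)$ from the equation directly. As announced in the abstract, we work instead with the perturbed viscous systems
\[
d\mathbf{u}^{\varepsilon,\delta}=-\delta\Delta^2\mathbf{u}^{\varepsilon,\delta}dt+\Delta\mathbf{u}^{\varepsilon,\delta}dt+\mathbf{u}^{\varepsilon,\delta}\times\Delta\mathbf{u}^{\varepsilon,\delta}dt-(1+|\mathbf{u}^{\varepsilon,\delta}|^2)\mathbf{u}^{\varepsilon,\delta}dt+\varepsilon\sum_{k}(\mathbf{u}^{\varepsilon,\delta}\times\mathbf{f}_k+\mathbf{f}_k)\circ dW_k ,
\]
which have more regularity.

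For part (i), the steps are as follows.
\begin{enumerate}
\item Apply It\^o's formula to $\|\mathbf{u}\|^2$ and $\|\nabla\mathbf{u}\|^2$, using $(\mathbf{u}\times\Delta\mathbf{u})\cdot\mathbf{u}=0$ and $(\mathbf{u}\times\Delta\mathbf{u},\Delta\mathbf{u})=0$. The damping $-(1+|\mathbf{u}|^2)\mathbf{u}$ and \eqref{1.1} then give
\[
\mathbb{E}\|\mathbf{u}(t)\|_{H^1}^2\le e^{-ct}\|\mathbf{u}_0\|_{H^1}^2+C ,
\]
uniformly in $\varepsilon\in[0,1]$ and $\delta\in(0,1]$. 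The Stratonovich correction term is bounded by $\varepsilon^2\|\mathbf{u}\|^2$ times a constant, which is absorbed.
\item Derive uniform tail estimates. Multiply by $\rho(|x|^2/m^2)\mathbf{u}$ and by $-\mathrm{div}(\rho\nabla\mathbf{u})$ with a cutoff $\rho$, and show that $\mathbb{E}\int_{|x|\ge m}(|\mathbf{u}|^2+|\nabla\mathbf{u}|^2)dx$ is small for large $t$ and large $m$, uniformly in $\varepsilon,\delta$. This uses that the tails of $\mathbf{f}_k$ in $H^1$ are small.
\item For fixed $\delta$, use the biharmonic term to bound $\int_t^{t+1}\mathbb{E}\|\mathbf{u}^{\varepsilon,\delta}\|_{H^2}^2$, and if needed an $H^{1+s}$ bound at time $t$ via the smoothing of $e^{-\delta t\Delta^2}$. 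Combined with step (ii), this gives tightness of the laws of $\mathbf{u}^{\varepsilon,\delta}(t)$ in $H^1$ for each $\delta$.
\item Take the limit $\delta\to0$. Show that $\mathbb{E}\|\mathbf{u}^{\varepsilon,\delta}(t)-\mathbf{u}^{\varepsilon}(t)\|_{H^1}^2\to0$ uniformly in $\varepsilon$ and for $\mathbf{u}_0$ in bounded sets, possibly by stopping at times where $\|\mathbf{u}\|_{H^1}$ is large and using \eqref{wang2}--\eqref{wang3}.
\end{enumerate}
Tightness passes to the limit, since a set within $\eta$ of a compact set for every $\eta$ is precompact. Averaging against an invariant measure $\mu^\varepsilon$ then yields tightness of $\bigcup_\varepsilon\mathcal I^\varepsilon$. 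We first show that the $\mu^\varepsilon$ have uniformly bounded second moments, by applying step (i) with a truncation argument, and then take $t$ large. For $\varepsilon=0$ the same argument applies, and in fact every invariant measure is $\delta_0$, because of the dissipation.

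For part (ii), we prove that for $\varphi$ bounded and Lipschitz on $H^1$, $\mathbb{E}\varphi(\mathbf{u}^{\varepsilon_n}(t;\mathbf{u}_0^n))\to\mathbb{E}\varphi(\mathbf{u}^{\varepsilon_0}(t;\mathbf{u}_0))$ whenever $\mathbf{u}_0^n\to\mathbf{u}_0$ in $H^1$. This is done by an $H^1$ difference estimate for the two solutions up to a stopping time $\tau_R$ at which $\|\mathbf{u}\|_{H^1}$ exceeds $R$. The dangerous cross term $(\mathbf{w}\times\Delta\mathbf{u},\Delta\mathbf{w})$ is handled with the $L^2_tH^2$ bound and Ladyzhenskaya's inequality, followed by Gronwall. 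The noise difference contributes a term of order $|\varepsilon_n-\varepsilon_0|^2$. Then
\[
\int P^{\varepsilon_n}_t\varphi\,d\mu^{\varepsilon_n}=\int\varphi\,d\mu^{\varepsilon_n},
\]
and tightness together with convergence that is uniform on compact sets of $H^1$ lets us pass to the limit, giving $\int P_t^{\varepsilon_0}\varphi\,d\mu=\int\varphi\,d\mu$. Hence $\mu\in\mathcal I^{\varepsilon_0}$. Weak continuity or the Feller property of $P^{\varepsilon_0}_t$ follows from the same estimate.

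The main obstacle is step (iv) of part (i): controlling $\mathbf{u}^{\varepsilon,\delta}-\mathbf{u}^\varepsilon$ in $H^1$ uniformly. The term $\delta\Delta^2\mathbf{u}^\varepsilon$ tested against $-\Delta\mathbf{w}$ requires $\mathbf{u}^\varepsilon\in L^2H^3$, which we do not have. I would first try testing against a smoothed version, or bounding $\delta(\Delta\mathbf{u}^{\varepsilon,\delta},\Delta^2\mathbf{w})$ by $\delta\|\Delta\mathbf{u}^{\varepsilon,\delta}\|\,\|\nabla\Delta\mathbf{w}\|$ using the $\delta$-weighted $H^3$ bound of the perturbed solution, which yields a rate $\sqrt\delta$.
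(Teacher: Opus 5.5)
There is a genuine gap. Step (iv) of your part (i) and all of your part (ii) rest on an $H^1$ stability estimate for the difference $\mathbf{w}$ of two solutions, and no such estimate is available for \eqref{1.3} on $\R^2$. The paper states explicitly that convergence in probability in $H^1(\R^2)$ is unavailable. The first obstruction is that the It\^o formula for $\|\nabla\mathbf{w}\|^2$ cannot be justified. It needs the drift paired with $\Delta\mathbf{w}\in L^2(0,T;L^2)$, but $\mathbf{u}^\varepsilon\times\Delta\mathbf{u}^\varepsilon$ lies only in $L^r(0,T;L^2)$ for $r<2$ (see \eqref{wang3}--\eqref{wang4}). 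Stopping when $\|\mathbf{u}\|_{H^1}$ is large does not improve the time integrability of $\|\Delta\mathbf{u}\|$. The second obstruction is that, even formally, your cross term does not close with $L^2_tH^2$ bounds. Ladyzhenskaya gives $|(\mathbf{w}\times\Delta\mathbf{u},\Delta\mathbf{w})|\le\|\mathbf{w}\|_{L^4}\|\Delta\mathbf{u}\|_{L^4}\|\Delta\mathbf{w}\|$, which needs $\nabla\Delta\mathbf{u}\in L^2$. Agmon's inequality instead gives $\frac12\|\mathbf{w}\|_{H^2}^2+C\|\mathbf{w}\|^2\|\Delta\mathbf{u}\|^4$, which needs $\int_0^T\|\mathbf{u}\|_{H^2}^4dt<\infty$. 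For the viscous difference, the $\delta$-weighted $H^3$ bound only puts $\delta^{-1/2}$ into the Gronwall exponent. Moreover, the viscous cross term $\delta\langle\Delta^2\mathbf{u}^{\varepsilon,\delta},\Delta\mathbf{u}^\varepsilon\rangle$ is not defined unless $\mathbf{u}^\varepsilon\in H^3$, and your proposed bound still contains $\nabla\Delta\mathbf{u}^\varepsilon$. So the uniform $H^1$ convergence in (iv), and the $H^1$ continuity in $(\varepsilon,\mathbf{u}_0)$ in (ii), are not obtainable with these tools.

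The missing idea is that $L^2$ convergence suffices, and the paper obtains it without any $H^1$ difference estimate. For $\delta\to0$ it uses compactness (Proposition \ref{pro3.1}: tightness, Skorokhod and Gy\"ongy--Krylov). For $\varepsilon\to\varepsilon_0$ it uses an $L^2$ difference estimate with stopping times (Lemma \ref{ucsp}). In (i) you have also misplaced the difficulty. The lack of an $L^\infty_tH^2$ bound is harmless, because the time-averaged bound \eqref{tigu1 p1} holds already for \eqref{1.3} and, combined with invariance, controls $\mu^\varepsilon$ on bounded regions (Lemma \ref{tigu1}). Only the $H^1$ tail estimate needs the viscous system, and it transfers to $\delta=0$ through $L^2$ convergence alone. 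The reason is that $v\mapsto\|v\|^2_{H^1(|x|>j)}$, set to $+\infty$ off $H^1$, is lower semicontinuous on $L^2(|x|>j)$, so Fatou's lemma turns Lemma \ref{tail_3} and Corollary \ref{tail_3a} into Propositions \ref{pro3a} and \ref{pro3}. In (ii) the paper first proves the invariance identity only for $g\in UC_b(L^2(\R^2))$. It uses Lemma \ref{ucsp} uniformly on a compact $\mathcal K\subset H^1(\R^2)$, together with the $L^2$ continuous dependence of Lemma \ref{lem5.3}, which makes $\mathbf{u}_0\mapsto\mathrm{E}\,g(\mathbf{u}^{\varepsilon_0}(t,\mathbf{u}_0))$ continuous on $H^1$. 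It then reaches every $g\in UC_b(H^1(\R^2))$ through $g_\delta=g\circ(I-\delta\Delta)^{-1/2}\in UC_b(L^2(\R^2))$: $g_\delta\to g$ pointwise on $H^1$, and dominated convergence finishes the argument. Your energy estimate and your truncation argument for the moments of invariant measures are fine, but they do not repair these two steps.
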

 
 As an immediate consequence of Theorem
 \ref{th1} we find that
 if $\mu^{\varepsilon_n}$
 is a sequence of invariant measures
 of the stochastic equation
  \eqref{1.3} with $\varepsilon_n
 \to 0$, then there exists an invariant
 measure  $ \mu^0$
 of the deterministic equation 
 for  $\varepsilon =0$
 such that, up to a subsequence,
 $\mu^{\varepsilon_n}
 \to \mu^0$ weakly.
 This implies the stability
 of invariant measures of the
 stochastic equation \eqref{1.3}
 with vanishing noise.

We will first prove   the set  $\bigcup_{\varepsilon
\in [0,1]} \mathcal{I}^\varepsilon$
is tight in $H^1(\mathbb{R}^2)$ for which
we need to find a compact subset
$K$
of $H^1(\mathbb{R}^2)$
such that the  probability
measure $ \mu^\varepsilon
(H^1(\mathbb{R}^2) \setminus
K)$ is uniformly small
for all $\mu^\varepsilon
\in   \mathcal{I}^\varepsilon$
and  ${\varepsilon
\in [0,1]}$.
It seems that 
it is  extremely  difficult
to find  such a compact set $K$ 
in 
$H^1(\mathbb{R}^2)$ 
because
the domain
$\R^2$ is unbounded
and   the Sobolev
embedding 
  $H^\alpha(\mathbb{R}^2)
  \hookrightarrow
    H^\beta(\mathbb{R}^2)$ with $\alpha>\beta$
  is   non-compact.
  A classical method to circumvent the
  non-compactness of  Sobolev embeddings
  on unbounded domains  
  is to use the  idea of uniform tail-ends estimates
  by showing that all the  solutions
  of the   equation are uniformly
  small outside a sufficiently large ball
   as in    \cite{wb}.

   In order to apply the argument of the
   uniform tail-ends estimates
   to  the stochastic equation
   \eqref{1.3} in $H^1(\mathbb{R}^2)$,
   we  first need to establish the energy equation
   for  solutions in $H^1(\mathbb{R}^2)$,  
   and  then use the energy equation
   to derive the uniform tail-ends estimates
   of solutions.
   Based on the regularity of solutions
   given by \eqref{wang2},
   if 
    \be\label{wang4}
    \mathbf{u}^\varepsilon \times
    \Delta \mathbf{u}^\varepsilon
    \in L^2(0,T; L^2
(\mathbb{R}^2)),
\quad \text{P-almost surely},
\ee
then  the energy equation
   for \eqref{1.3} in $H^1(\mathbb{R}^2)$
   follows from 
    \cite[Theorem 3.2]{kry1981},
    and in 
      this case,    $ \mathbf{u}^\varepsilon   \in C([0,T];   H^1(\mathbb{R}^2))$,
     P-almost surely.

Unfortunately, by \eqref{wang3}
we find that 
    $\mathbf{u}^\varepsilon \times
    \Delta \mathbf{u}^\varepsilon
    $  only belongs to 
   $ L^r(0,T; L^2
(\mathbb{R}^2))$ for $r<2$
but not  for $r=2$,
P-almost surely,
 and hence  it does not
 satisfy the condition \eqref{wang4}.
 Consequently,
 the energy equation
 for  solutions of \eqref{1.3}
 in 
  $H^1(\mathbb{R}^2)$
  is unavailable and the uniform tail-ends
  estimates in $H^1(\mathbb{R}^2)$
  cannot be established by the classical method.
  That is why   the uniform
  tail-ends estimates of solutions
  for the two-dimensional
  LLB equation 
  was left as 
  an open problem
   in \cite{Q1},
  where the authors only
  proved the uniform tail-ends estimates
  for the one-dimensional LLB equation
  based on 
  the embedding
  $H^1(\R) \hookrightarrow L^\infty (\R)$.
  
  The first goal of the present paper is
  to solve the open problem for the
  two-dimensional LLB equation
  \eqref{1.3} by employing 
  a \textbf{\emph{new}}
  method
  instead of the classical one.
  Our idea is to perturb the LLB equation
  \eqref{1.3}
  by adding a higher-order viscous term
  $\delta    \Delta^2 \mathbf{u}$
  with $\delta\in (0,1]$
  and derive the   uniform tail-ends estimates
  of  \eqref{1.3}
  by a limiting process as $\delta \to 0$.
  More precisely, we consider the
  following viscous equation on $\R^2$:
\begin{align}\label{1.3*}
&d\mathbf{u}^\delta=\Delta \mathbf{u}^\delta dt-\delta \Delta^2 \mathbf{u}^\delta dt+ \mathbf{u}^\delta\times \Delta \mathbf{u}^\delta dt-(1+|\mathbf{u}^\delta|^{2})\mathbf{u}^\delta dt\nonumber\\
&+\frac{\varepsilon^2}{2}
\sum_{k=1}^\infty(\mathbf{u}^\delta \times \mathbf{f}_k)\times \mathbf{f}_kdt+\varepsilon\sum_{k=1}^\infty(\mathbf{u}^\delta \times \mathbf{f}_k+\mathbf{f}_k)d W_k,
\end{align}
where $\delta\in (0, 1]$ and $\Delta^2$ is the biharmonic operator given by
$\Delta^2=\Delta\circ\Delta
$.
 
 We first prove that for every
 $ \mathbf{u}_0 
 \in H^1(\R^2)$ and $\delta\in  (0,1]$,
 the  viscous equation
 \eqref{1.3*} has a unique solution
$ \mathbf{u}^\delta$ which satisfies
    \eqref{wang2}  and 
  the further regularity:
 \be\label{wang5}
 \mathbf{u}^\delta\times \Delta \mathbf{u}^\delta \in
L^{p}(\Omega; L^2(0,T; L^2
(\mathbb{R}^2
))),
    \quad
    \forall \ p\ge1,
    \ee
see Proposition \ref{pro2}. 
By \eqref{wang5} we  then obtain the
  energy  equation
  for the solution  $\mathbf{u}^\delta$
  of \eqref{1.3*} in $H^1(\mathbb{R}^2)$,
  by which  and  a cut-off technique,
  we further derive the     uniform tail-ends estimates of  $\mathbf{u}^\delta$ in $H^1(\mathbb{R}^2)$,  
   see Lemma  \ref{tail_3}.
   We finally consider the 
  the vanishing viscosity limit 
  of  $\mathbf{u}^\delta$
   as $\delta\rightarrow 0$,
  and prove  $\mathbf{u}^\delta$
  converges to the solution of the LLB equation
  \eqref{1.3} in probability.
  This convergence 
  makes it possible to 
  obtain the  uniform tail-ends estimates of 
  \eqref{1.3}  in 
   $H^1(\mathbb{R}^2)$
   by  a limiting process and
   the  tail-ends
   estimates of the  viscous solution
   $\mathbf{u}^\delta$ of  \eqref{1.3*},
   see Proposition  \ref{pro3a} and 
   Proposition
   \ref{pro3}.

The second goal of this  paper
is to show every limit
of a sequence of invariant measures of
\eqref{1.3} in $H^1(\R^2)$  must be invariant, for
which,
by the
   standard method \cite{D1, li2022,kuk},
  the convergence of solutions
   in probability in $H^1(\mathbb{R}^2)$
   with respect to
   $\varepsilon$
   is necessary.
   However, due to the higher-order
   nonlinearity of the term 
   $\mathbf{u}^\varepsilon
   \times \Delta \mathbf{u}^\varepsilon$
   and the low regularity of solutions,
   the
   convergence of solutions of \eqref{1.3}
   in probability in $H^1(\mathbb{R}^2)$
   is unavailable, and hence the standard method
   is invalid in this case.
   
   In order to solve the problem, we first
   prove the convergence of solutions
   of \eqref{1.3}
   in probability in  the space $L^2(\R^2)$
   with respect to $\varepsilon$,
   which is weaker than the 
   convergence in the space 
   $H^1(\mathbb{R}^2)$.
   We then  
 employ a regularization method
 to transform a function in $L^2(\R^2)$
 into  a more regular function in  $H^1(\mathbb{R}^2)$.
 We finally  establish  the invariance of
   the limit of invariant measures of \eqref{1.3}
    by  passing to the limit as 
    the regularization  parameter
     approaches
    zero,  
   see the proof of Theorem \ref{th1}
   in the last section.

\subsection{Current  works  on the LLB equation}
Due to its physical importance, 
the LLB equation  has  been
extensively studied 
  in recent years.  
Using the Faedo-Galerkin    scheme
 and the  compactness method,   Le \cite{le}
 first 
 proved the existence of weak solutions of
  \eqref{e3} in a smooth bounded domain.
 Then Guo et al. \cite{guo} established
 the well-posedness of strong solutions of \eqref{e3} evolving in a finite-dimensional
  closed Riemannian manifold, and    Guo et al. \cite{li} further proved
  the existence of   weak and strong solutions
   to Landau-Lifshitz-Bloch-Maxwell equations with polarization. Wang et al. \cite{whq} considered the limiting behavior of \eqref{e3} with the stray field in the thin film.
   For \eqref{1.3}  in a bounded domain,
  Jiang et al. \cite{W1} and    Brze\'{z}niak et al. \cite{B1}  established  the existence of martingale solutions,
   strong 
  solutions   and  invariant measures.
When the equation
is driven by   transport noise,
the existence and   regularity
of solutions  were  discussed  by
 Qiu et al. \cite{q1}. 
 The reader is  further referred to
  \cite{gok,gok2, Q2, soh} for the large deviations, ergodicity and Wong-Zakai approximations
  of the LLB equation.

  For \eqref{1.3} defined on unbounded domains, the authors of  \cite{Q1} proved   the existence of invariant measures  in the two-dimensional case,
  and discussed 
   the limiting behavior of invariant measures 
   in the one-dimensional case, and
    in  \cite{Q22} the authors proved 
     the existence, uniqueness and upper semi-continuity of  random attractors in the one-dimensional case. The goal of this paper is to
     study the   limiting behavior of invariant measures
     of \eqref{1.3}
     in the two-dimensional case which was left in \cite{Q1}
     as an open problem.

The rest of this paper is organized as follows.
 In  the next section,
  we establish the existence and uniqueness of     solutions of \eqref{1.3*} by the domain expansion method.
  Then we prove the convergence of solutions
  of \eqref{1.3*} as the viscosity approaches zero
  in Section 3, and 
  establish the  uniform tail-ends estimates 
  of solutions in 
  Section 4.
  We  finally prove 
  Theorem \ref{th1} in  the last section.

\section{Well-posedness of the 
stochastic viscous  equation}
In this section we consider the global well-posedness of the viscous  equation \eqref{1.3*}.
We first  introduce the Sobolev spaces
which will be  used in the sequel. 
Let $\mathscr{D}$
be an open set in
$\mathbb{R}^2$.
Given $p\ge 1$, let
$$
L^p(\mathscr{D}, \mathbb{R}^3)
=\left \{
\mathbf{u}: \mathscr{D}
\to \mathbb{R}^3:
\int_{\mathscr{D}}
|\mathbf{u} (x)|^p dx <\infty
\right \} .
$$
The norm of
$
L^p(\mathscr{D}, \mathbb{R}^3)
$ is  denoted by
$\| \cdot \|_{L^p(\mathscr{D}, \mathbb{R}^3) }$.
In particular,
for $p=2$, the inner
product
and the norm of
$
L^2(\mathscr{D}, \mathbb{R}^3)
$
are written as
$(\cdot, \cdot)_{L^2(\mathscr{D}, \mathbb{R}^3)}$
and $\| \cdot \|_{L^2(\mathscr{D}, \mathbb{R}^3)}$,
respectively.
For every   $k\in \mathbb{N}$
and  $p \ge 1$, let
$$
W^{k,p}(\mathscr{D},
\mathbb{R} ^3 )
=\left \{
\mathbf{u}
\in L^p
( \mathscr{D},
  \mathbb{R}^3) :
  \partial ^\alpha
  \mathbf{u}
  \in
  L^p
  ( \mathscr{D},
  \mathbb{R}^3),
  \ |\alpha | \le k
 \right \}.
 $$
 The norm of
 $W^{k,p}(\mathscr{D},
 \mathbb{R} ^3 )$
 is denoted by
 $\|\cdot \|_{W^{k,p}(\mathscr{D},
 	\mathbb{R} ^3 )}$.
 	Let
$W_0^{k,p}(\mathscr{D},
\mathbb{R}^3 )$
be the closure of $C_0^\infty(\mathscr{D}, \mathbb{R}^3)$ with respect to
 the topology of $W^{k,p}(\mathscr{D}, \mathbb{R}^3)$.
   For  $p=2$, we
   write  $H^k(\mathscr{D},
   \mathbb{R}^3)
   =  W^{k,2}(\mathscr{D} ,
   \mathbb{R} ^3
   )$
    which is a Hilbert space
    with inner product $(\cdot,\cdot)_{H^k(
  \mathscr{D},
  \mathbb{R} ^3)}=\sum\limits_{|\alpha|\leq k}(\partial^\alpha \cdot, \partial^\alpha \cdot)_{L^2(\mathscr{D}, \mathbb{R}^3)}$.
  Denote by $H^{-k}(
  \mathscr{D},
  \mathbb{R} ^3)$ the dual space of $H^{k}(
  \mathscr{D},
  \mathbb{R} ^3)$.
     If no confusion occurs, we will use
   $L^p (\mathscr{D} )$
   for  $L^p (\mathscr{D},
\mathbb{R} ^3 )$, and use  similar notations
  for other spaces.

 Let $Y $ be a  separable  Hilbert space
 with inner product $ (\cdot, \cdot )_Y $.
 Denote by
   $Y_w$ the    space
   $Y$
endowed with  weak topology.  Given $T>0$, 
let
$$
C([0,T]; Y_w)=
\left \{
\mathbf{u}: \mathbf{u} \ \text{is  a  continuous  function  from }  [0,T]
\ \text{to}\ Y_w \right \},
$$
which is equipped  with the following topology:
$\mathbf{u}_n \to \mathbf{u}$ in
$C([0,T]; Y_w)$  if for every $y\in Y$,
$\lim\limits_{n\to \infty}
\sup\limits_{t\in [0,T]}
|(\mathbf{u}_n(t) - \mathbf{u}(t), y)_Y|
=0$.
Given $R>0$, let
$C([0,T]; Y^R_w)$ be a subset of
$C([0,T]; Y_w)$ given by
$$
C([0,T]; Y^R_w)=
\left \{
\mathbf{u}\in C([0,T]; Y_w): \sup_{t\in [0,T]}
\| \mathbf{u}(t)\|_Y
\le R    \right \}.
$$
Since the weak topology of
$Y$ on the  closed ball
of radius $R$ centered at the origin
is metrizable, we see that
$C([0,T]; Y^R_w)$ is a complete metric space.

A  solution of the viscous equation
\eqref{1.3*} is understood in the following sense.

\begin{definition}\label{defa}  
 Given 
  $ \mathbf{u}_0 \in H^1(\R^2)$,
  $\varepsilon\in [0,1]$, $\delta\in(0,1]$ and 
  $T>0$,
   a  
 continuous     $H^1(\mathbb{R}^2)$-valued  $\mathcal{F}_t$-adapted
 process $\mathbf{u}^{\varepsilon,\delta}$
   is called a solution of \eqref{1.3*} if 
$$
 \mathbf{u}^{\varepsilon, \delta}\in 
L^2(\Omega; L^\infty(0,T; H^1(\mathbb{R}^2)))
\cap
L^2(\Omega; L^2(0,T; H^3(\mathbb{R}^2)))
 $$ 
such that for all $t\in [0,T]$,   $\mathrm{P}$-almost 
surely,
\begin{align*}
&\mathbf{u}^{\varepsilon, \delta}(t)=\mathbf{u}_0+\int_{0}^{t}\Delta \mathbf{u}^{\varepsilon, \delta}(s)ds-\int_{0}^{t}\delta\Delta^2 \mathbf{u}^{\varepsilon,\delta}(s)ds+\int_{0}^{t}\mathbf{u}^{\varepsilon,\delta}(s)\times \Delta \mathbf{u}^{\varepsilon,\delta}(s)ds\nonumber\\
&\quad-\int_{0}^{t}(1+|\mathbf{u}^{\varepsilon,\delta}(s)|^{2})\mathbf{u}^{\varepsilon,\delta}(s)ds
+\frac{\varepsilon^2}{2}\sum_{k= 1}^\infty
\int_{0}^{t}(\mathbf{u}^{\varepsilon,\delta}(s)\times \mathbf{f}_k)\times \mathbf{f}_kds
+\varepsilon\sum_ {k= 1}^\infty\int_{0}^{t}(\mathbf{u}^{\varepsilon,\delta}(s)\times \mathbf{f}_k+\mathbf{f}_k)d W_k 
\end{align*}
in   
$H^{-1}(\mathbb{R}^2 )$.
\end{definition}

Our main result of this section is formulated as follows.

\begin{proposition}\label{pro2}  
  If \eqref{1.1} holds,  then for every
  $ \mathbf{u}_0 \in H^1(\R^2)$,
      $\varepsilon\in [0,1]$ and $\delta\in(0,1]$, the stochastic viscous equation \eqref{1.3*} admits a unique   solution $\mathbf{u}^{\varepsilon, \delta}$ in the sense of
       Definition \ref{defa}.
       
       Furthermore,  
 for  every $ p\ge 1$, $\varepsilon\in [0,1]$ and $\delta\in(0,1]$, 
   the  following uniform estimates hold:
\be \label{pro2 1}
\mathrm{E}\left(\sup_{t\in[0,T]}
\| \mathbf{u}^{\varepsilon,\delta} (t, \mathbf{u}_0 )\|^{2p}_{H^1 (\R^2) }
\right)
\le   M_1,
\ee
\be\label{pro2 2}
  \mathrm{E}\left(\int_{0}^{T}
\left(\|
\mathbf{u}^{\varepsilon,\delta}
(t, \mathbf{u}_0 )\|^2_{H^2 (\R^2) }+
\delta \| \mathbf{u}^{\varepsilon,\delta}
(t, \mathbf{u}_0 )\|^2_{H^3 (\R^2) } \right)dt
\right)^p
\leq M_1,
\ee
 where $M_1=M_1
 (\mathbf{u}_0, T,  p)>0$ is
 a constant 
 depending on
 $\mathbf{u}_0, T $ and $p$ but not on
    $  \delta$  
    or  $ \varepsilon$. 
    In addition,  
    for all  $\alpha\in (0,\frac{1}{2})$ 
    and $q\ge 2$,  
     we have
  \begin{equation} \label{pro2 3}
  \mathrm{E}
  \left (
  \|  \mathbf{u}^{\varepsilon,\delta} \|^{q}
  _{ W^{\alpha, q}
  	(0,T; H^{-1} (\R^2 ))}
  \right )
  \le M_2,
  \end{equation}
where $M_2=M_2 
 (\mathbf{u}_0, T,  \alpha, q)>0$ is
 a constant 
 depending on
 $\mathbf{u}_0, T, \alpha $ and $q$ but not on
    $ \delta$ 
    or  $ \varepsilon$.            
\end{proposition}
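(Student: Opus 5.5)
Following the introduction, I will use the domain expansion method. For each $R>0$ let $\mathscr{D}_R=\{x\in\R^2:|x|<R\}$. On $\mathscr{D}_R$ I consider \eqref{1.3*} with boundary conditions $\mathbf{u}=\Delta\mathbf{u}=0$ on $\partial\mathscr{D}_R$, with coefficients $\mathbf{f}_k$ restricted to $\mathscr{D}_R$ and a cut-off initial datum $\mathbf{u}_{0,R}\to\mathbf{u}_0$ in $H^1(\R^2)$. On the bounded domain I take a Galerkin scheme built on the Dirichlet Laplacian eigenfunctions, which also diagonalize $\Delta^2$. The term $-\delta\Delta^2\mathbf{u}$ dominates $\mathbf{u}\times\Delta\mathbf{u}$, so I expect a unique strong solution $\mathbf{u}_R$. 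It would come either from the Galerkin limit plus pathwise uniqueness and Yamada--Watanabe, or from Krylov--Rozovskii monotonicity theory after a localization in $\|\mathbf{u}\|_{H^1}$.

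The heart of the argument is a set of a priori estimates that are uniform in $R,\delta,\varepsilon$.

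\emph{$L^2$ estimate.} Apply It\^o's formula to $\|\mathbf{u}\|_{L^2}^2$. The cross terms $(\mathbf{u}\times\Delta\mathbf{u},\mathbf{u})$ and $(\mathbf{u}\times\mathbf{f}_k,\mathbf{u})$ vanish. The Stratonovich correction $(\mathbf{u}\times \mathbf{f}_k)\times \mathbf{f}_k$ combines with the It\^o quadratic variation into terms bounded by $\|\mathbf{u}\|^2$ and the constant from \eqref{1.1}. BDG and Gronwall then give $\mathrm{E}\sup_t\|\mathbf{u}\|^{2p}$ together with $\int\|\nabla\mathbf{u}\|^2+\|\mathbf{u}\|_{L^4}^4$.

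\emph{$H^1$ estimate.} Apply It\^o's formula to $\|\nabla\mathbf{u}\|^2$, i.e.\ pair the equation with $-\Delta\mathbf{u}$. The key cancellation is $(\mathbf{u}\times\Delta\mathbf{u},\Delta\mathbf{u})=0$, which removes the dangerous term. The cubic term gives $\int|\mathbf{u}|^2|\nabla\mathbf{u}|^2+2\int|\mathbf{u}\cdot\nabla\mathbf{u}|^2\ge0$, and the viscous term gives $\delta\|\nabla\Delta\mathbf{u}\|^2$. The noise terms involve $\nabla(\mathbf{u}\times\mathbf{f}_k)$, which is controlled by $\|\mathbf{f}_k\|_{W^{1,\infty}}\|\mathbf{u}\|_{H^1}$. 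Gronwall and BDG then yield \eqref{pro2 1}, and the bounds on $\int\|\Delta\mathbf{u}\|^2$ and $\delta\int\|\nabla\Delta\mathbf{u}\|^2$ yield \eqref{pro2 2}. For higher moments I apply It\^o's formula to $\|\mathbf{u}\|_{H^1}^{2p}$.

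\emph{Fractional time estimate \eqref{pro2 3}.} I write $\mathbf{u}(t)$ as $\mathbf{u}_0$ plus drift integrals plus the stochastic integral. For the drift I bound $\|\mathbf{u}\times\Delta\mathbf{u}\|_{H^{-1}}\le C\|\mathbf{u}\|_{L^4}\|\nabla\mathbf{u}\|_{L^4}$ (moving one derivative onto the test function), which is controlled by $\|\mathbf{u}\|_{H^1}\|\mathbf{u}\|_{H^2}$. I also use $\delta\|\Delta^2\mathbf{u}\|_{H^{-1}}\le\delta\|\mathbf{u}\|_{H^3}$, which by \eqref{pro2 2} is $L^2$ in time with a $\delta^{1/2}$ factor to spare. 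These give $W^{1,2}(0,T;H^{-1})\subset W^{\alpha,q}$ bounds in suitable moments. The stochastic integral is handled by the standard Flandoli--Gatarek lemma, giving $W^{\alpha,q}$ for $\alpha<1/2$.

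To pass from $\mathscr{D}_R$ to $\R^2$, I extend $\mathbf{u}_R$ by zero; it remains in $H^1_0$, although $H^3$ control under extension needs care, so I would work with local norms. Using the uniform bounds, I show tightness of the laws of $\mathbf{u}_R$ in $L^2(0,T;H^1_{loc})\cap C([0,T];H^{-1}_{loc})\cap C([0,T];(H^1)^R_w)$, by Aubin--Lions on balls together with \eqref{pro2 3}. Skorokhod--Jakubowski then produces a martingale solution on $\R^2$, and the identification of the nonlinear term uses strong local convergence in $L^2(0,T;H^1_{loc})$. Estimates \eqref{pro2 1}--\eqref{pro2 3} pass to the limit by lower semicontinuity.

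Pathwise uniqueness on $\R^2$ comes from an $L^2$ (or $H^1$) energy estimate for the difference $\mathbf{w}=\mathbf{u}-\mathbf{v}$. The main obstacle is the term $(\mathbf{w}\times\Delta\mathbf{v},\mathbf{w})=0$ together with $(\mathbf{u}\times\Delta\mathbf{w},\mathbf{w})=-(\nabla\mathbf{u}\times\nabla\mathbf{w},\mathbf{w})$. This is bounded by $\|\nabla\mathbf{u}\|_{L^4}\|\nabla\mathbf{w}\|\|\mathbf{w}\|_{L^4}$ and closed with Ladyzhenskaya's inequality and a stopping time on $\int\|\mathbf{u}\|_{H^2}^2$. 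Gy\"ongy--Krylov then upgrades existence to a strong solution, and continuity in $H^1$ follows from Krylov--Rozovskii, since $\mathbf{u}\in L^2(0,T;H^3)$ and the drift lies in $L^2(0,T;H^{-1})$. Here $\mathbf{u}\times\Delta\mathbf{u}\in L^2(L^2)$ holds thanks to $H^3$ and the 2D embeddings, which is exactly \eqref{wang5}, but only with a $\delta$-dependent bound.
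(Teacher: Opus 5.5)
Your plan follows the paper's own route: domain expansion on balls with $\mathbf{u}=\Delta\mathbf{u}=0$, Galerkin solutions there, bounds uniform in $R,\delta,\varepsilon$, a martingale solution on $\mathbb{R}^2$ in the limit, pathwise uniqueness, and hence a strong solution. Extending by zero and working with local norms, instead of the paper's $\theta_n\mathbf{u}^{\delta,n}$, is a harmless variant.

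\textbf{The gap.} One step fails as you set it up, and the paper's \eqref{e1*} has the same defect. It is the $H^1$ It\^o estimate on $\mathscr{D}_R$ when $\mathbf{f}_k$ is merely restricted to the ball. With Dirichlet eigenfunctions $-\Delta e_j=\lambda_j e_j$ and projections $P_m$, the It\^o correction in $d\|\nabla\mathbf{u}_m\|^2$ is $\varepsilon^2\sum_k\|\nabla P_m(\mathbf{u}_m\times\mathbf{f}_k+\mathbf{f}_k)\|^2$.
\begin{itemize}
\item The part $\mathbf{u}_m\times\mathbf{f}_k$ lies in $H^1_0(\mathscr{D}_R)$, where $P_m$ is a contraction, so it causes no trouble.
\item The part $\mathbf{f}_k$ generally has nonzero trace on $\partial\mathscr{D}_R$; nothing in \eqref{1.1} prevents this. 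Then $\|\nabla P_m\mathbf{f}_k\|^2=\sum_{j\le m}\lambda_j|(\mathbf{f}_k,e_j)|^2\to\infty$.
\end{itemize}
So your control of the noise terms by $\|\mathbf{f}_k\|_{W^{1,\infty}}\|\mathbf{u}\|_{H^1}$ is not available.

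This is not a Galerkin artefact. Take the linear part $d\mathbf{z}=(\Delta-\delta\Delta^2)\mathbf{z}\,dt+\mathbf{f}\,dW$ with these boundary conditions. A direct computation in the eigenbasis gives
\[
\mathrm{E}\int_0^T\|\Delta\mathbf{z}\|^2_{L^2}\,dt\ \ge\ \frac{T}{4}\sum_{\lambda_j\ge 1/T}\frac{\lambda_j|(\mathbf{f},e_j)|^2}{1+\delta\lambda_j}.
\]
The right-hand side tends to $+\infty$ as $\delta\to0$ whenever $\mathbf{f}\notin H^1_0(\mathscr{D}_R)$. Similarly, $\mathrm{E}\int_0^T\|(-\Delta)^{3/2}\mathbf{z}\|^2dt=\infty$ for every $\delta>0$. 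Hence neither the $\delta$-uniform bound \eqref{pro2 2} nor the $L^2(0,T;H^3)$-regularity on the ball can be obtained for the problem on $\mathscr{D}_R$ as posed.

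\textbf{The repair.} Make the noise compatible with the boundary condition: on $\mathscr{D}_R$ replace $\mathbf{f}_k$ by $\theta_R\mathbf{f}_k$, both in the noise and in the It\^o--Stratonovich correction term. Then:
\begin{itemize}
\item $\theta_R\mathbf{f}_k\in H^1_0(\mathscr{D}_R)$, and $\sum_k\|\theta_R\mathbf{f}_k\|_{W^{1,\infty}\cap H^1}\le C\sum_k\|\mathbf{f}_k\|_{W^{1,\infty}\cap H^1}$ with $C$ independent of $R\ge1$.
\item The projections no longer cause trouble.
\item Your $L^2$ estimate, your $H^1$ estimate (including $\delta\int\|\nabla\Delta\mathbf{u}\|^2$ and, via elliptic regularity for $\mathbf{u}=\Delta\mathbf{u}=0$, the $H^3$ bound) and your fractional-time estimate hold uniformly in $m,R,\delta,\varepsilon$.
\end{itemize}
Since $\theta_R\mathbf{f}_k=\mathbf{f}_k$ on $\{|x|<R/2\}$, testing against $\varphi\in C_0^\infty(\mathbb{R}^2)$ identifies the same limit equation. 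The rest of your argument then goes through as you describe: local compactness with Skorokhod--Jakubowski, the $L^2$ uniqueness estimate closed with Ladyzhenskaya's inequality, Gy\"ongy--Krylov, and $H^1$-continuity from the $H^3\subset H^1\subset H^{-1}$ triple.
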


\begin{proof}

The proof is similar to \cite{Q1}, and hence the
details are omitted. We here only sketch the main
idea of the proof.
 For simplicity,   we will
suppress
  the parameter $\varepsilon$
  in  $\mathbf{u}^{\varepsilon,\delta}$
  when no confusion occurs.
  
  The proof consists of two steps: we first
  prove the existence and uniqueness of solutions
  defined in a bounded domain
  $\mathscr{D}_n
=\{x\in \mathbb{R}^2: |x|
<n \}$ for 
every $n\in \mathbb{N}$, and then
obtain  a solution to \eqref{1.3*}
defined in the entire space  $\R^2$
by taking the limit as  $n\to \infty$.

Take  a smooth
function $\theta : \mathbb{R}^2
\to \mathbb{R}$ such that
$0\le \theta (x) \le 1$
for all $x\in \mathbb{R}^2$ and
\begin{equation}\label{cutoff}
\theta (x)=\left\{\begin{array}{ll}
\!\!0, ~ \text{ if  } |x|\geq {\frac 34}; \\\\
\!\!1,~ \text{ if  } |x|\leq \frac{1}{2}.
\end{array}\right.
\end{equation}
 Given $n\in \mathbb{N}$,
let
$\theta_n (x) = \theta  \left ({\frac xn}
\right )$ for all $x\in \mathbb{R}^2$.

For every $n\in \mathbb{N}$,
consider    the  equation
  in    $\mathscr{D}_n$:
\begin{equation}\label{e1*}
\left\{\begin{array}{ll}
\!\!d\mathbf{u}^{\delta,n}=\Delta \mathbf{u}^{\delta,n}dt-\delta \Delta^2 \mathbf{u}^{\delta,n} dt+\mathbf{u}^{\delta,n}\times \Delta \mathbf{u}^{\delta,n}dt\\\\
\!\!\qquad-(1+|\mathbf{u}^{\delta,n}|^{2})\mathbf{u}^{\delta,n}dt+ 
\frac{\varepsilon^2}{2}\sum_{k= 1}^\infty
(\mathbf{u}^{\delta,n}\times \mathbf{f}_k)\times \mathbf{f}_kdt
+\varepsilon\sum_ {k= 1}^\infty(\mathbf{u}^{\delta,n}\times \mathbf{f}_k+\mathbf{f}_k)d W_k,\\\\
\!\!\mathbf{u}^{\delta,n}|_{\partial\mathscr{D}_n}=\Delta\mathbf{u}^{\delta,n}|_{\partial\mathscr{D}_n}=0,\\\\
\!\!\mathbf{u}^{\delta,n}(0,x)=\theta_n
(x)\mathbf{u}_0(x), \quad x\in \mathscr{D}_n,
\end{array}\right.
\end{equation}
 where $\mathbf{u}_0
\in H^1(\mathbb{R}^2)$.

Since the domain 
$\mathscr{D}_n$ is bounded,
by the 
Faedo-Galerkin method and the arguments
of \cite{B1, W1}, we infer that for every
 $\mathbf{u}_0
\in H^1(\mathbb{R}^2)$,
the stochastic equation
\eqref{e1*} has 
a unique      solution $\mathbf{u}^{\delta,n}$ in the   sense that
   $\mathbf{u}^{\delta,n}$ is a 
continuous,   $H_0^1(\mathscr{D}_n)$-valued,  $\mathcal{F}_t$-adapted process  such that
\begin{align}\label{1.3c}
\mathbf{u}^{\delta,n}\in L^p(\Omega; L^\infty(0,T; H_0^1(\mathscr{D}_n
    ))\cap L^2(0,T;H^3(\mathscr{D}_n
   ) )),  \quad \forall \ p\ge 1,
    \end{align}
    \be\label{1.3c1}
    \mathbf{u}^{\delta,n} \times
    \Delta \mathbf{u}^{\delta,n}
    \in
L^{p}(\Omega; L^2(0,T; L^2
(\mathscr{D}_n
   ))),
    \quad
    \forall \ p\ge1,
    \ee
and   $\mathbf{u}^{\delta,n}$
satisfies \eqref{e1*}   in $H^{-1}(\mathscr{D}_n
)$, $\mathrm{P}$-almost surely.

Moreover, by It\^{o}'s formula, after detailed
calculations,  we  obtain 
the  following uniform estimates:
 for  every $n\in \mathbb{N}$, $p\in [1,\infty)$, $\varepsilon\in [0,1]$ and $\delta\in(0,1]$, 
\be \label{e0}
\mathrm{E}\left(\sup_{t\in[0,T]}
\|\mathbf{u}^{\delta,n}(t, \mathbf{u}_0 )\|^{2p}_{H_0^1 (\mathscr{D}_n) }
\right)
\le c_1,
\ee
\be\label{es0}
  \mathrm{E}\left(\int_{0}^{T}
\left(\|\mathbf{u}^{\delta,n}(t, \mathbf{u}_0 )\|^2_{H^2 (\mathscr{D}_n) }+
\delta \|\mathbf{u}^{\delta,n}(t, \mathbf{u}_0 )\|^2_{H^3 (\mathscr{D}_n) } \right)dt
\right)^p
\leq c_1,
\ee
 where $c_1=c_1
 (\mathbf{u}_0, T,  p)>0$ is
 a constant 
 depending on
 $\mathbf{u}_0, T $ and $p$ but not on
    $ n, \delta$  
    or  $ \varepsilon$. 
    In addition,  
    for all  $\alpha\in (0,\frac{1}{2})$ 
    and $q\ge 2$,  
     we have
  \begin{equation} \label{1.50b}
  \mathrm{E}
  \left (
  \| \mathbf{u}^{\delta,n}\|^{q}
  _{ W^{\alpha, q}
  	(0,T; H^{-1} (\mathscr{D}_n) )}
  \right )
  \le c_2,
  \end{equation}
where $c_2=c_2 
 (\mathbf{u}_0, T,  \alpha, q)>0$ is
 a constant 
 depending on
 $\mathbf{u}_0, T, \alpha $ and $q$ but not on
    $ n, \delta$ 
    or  $ \varepsilon$.

Note that the solution
$\mathbf{u}^{\delta, n}$ is defined
in $\mathscr{D}_n$ only.
To obtain a solution to \eqref{1.3*},
we need to  extend 
$\mathbf{u}^{\delta, n}$
from  
$\mathscr{D}_n$ to 
the entire space $\R^2$.
Let  $\mathbf{v}^{\delta, n}
(t,x) =\theta_n (x) \mathbf{u}^{\delta,n}
(t,x) $ for all $ t\in [0,T],
x\in \mathscr{D}_n$, $n\in \mathbb{N}$ and  $\delta\in (0,1]$. Then
  set
 $\mathbf{v}^{\delta, n}(t,x) =0$
 for $ t\in [0,T] $ and $x\in
  \mathbb{R}^2\setminus\mathscr{D}_n$.
  The function 
   $\mathbf{v}^{\delta, n}$ 
   is defined on $[0,T] \times \R^2$
   and satisfies the
      uniform
   estimates given by
   \eqref{e0}-\eqref{1.50b}.
   
   Following the argument of \cite{Q1},
   by taking the limit
   of the sequence $\{\mathbf{v}^{\delta, n}
   \}_{n=1}^\infty$ as $n \to \infty$,
   we can  obtain a martingale solution
   to \eqref{1.3*}. Moreover,
   one can verify that the solutions
   of \eqref{1.3*}
   have the  pathwise uniqueness property
   as in \cite{Q1}, which  further implies the
   existence and uniqueness of strong solutions
   to \eqref{1.3*}.
   Finally,  by \eqref{e0}-\eqref{1.50b}
   we obtain 
    \eqref{pro2 1}-\eqref{pro2 3},
    which completes the proof. 
    \end{proof}

 In the next section, we will
 investigate the convergence
 of solutions of the viscous equation \eqref{1.3*}
 as the viscosity approaches zero,
 which is crucial to establish the
 uniform tail-ends 
 estimates of solutions and investigate the stability
 of invariant measures of the LLB equation \eqref{1.3}.

\section{The vanishing viscosity limit}
In this section, we consider the vanishing 
viscosity limit of the
stochastic viscous equation
\eqref{1.3*}  and  prove the following convergence
result.

 \begin{proposition}\label{pro3.1}
If   \eqref{1.1} holds and
$\mathbf{u}_0
 \in H^1
 (\mathbb{R}^2)$, then the solutions
 $\mathbf{u}^{\varepsilon,\delta}  $
 and $  \mathbf{u}^{\varepsilon}$   of
   \eqref{1.3*} and  \eqref{1.3}
   satisfy: for every $\varepsilon \in [0,1]$,
  $$
\lim_{\delta
\to 0}
\mathbf{u}^{\varepsilon, 
\delta}(\cdot, \mathbf{u}_0)=
\mathbf{u}^{\varepsilon} (\cdot, \mathbf{u}_0)
 \quad \text{in probability in } \ 
  C([0,T]; L^{2}(\mathbb{R}^2))\cap L^2(0,T;H^1
 (\mathbb{R}^2)) .
 $$
 \end{proposition}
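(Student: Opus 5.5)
We will prove the convergence with a stopping-time energy estimate on the difference $\mathbf{w}^\delta=\mathbf{u}^{\varepsilon,\delta}-\mathbf{u}^\varepsilon$ in $L^2(\mathbb{R}^2)$. The uniform bounds \eqref{pro2 1}--\eqref{pro2 2} and \eqref{wang2} will control everything with high probability. The key point is that the difference is only measured in $C([0,T];L^2)\cap L^2(0,T;H^1)$, which is exactly the level at which the equation for $\mathbf{w}^\delta$ admits an energy identity. Since $\mathbf{u}^\varepsilon\in L^2(0,T;H^2)$ and $\mathbf{u}^\varepsilon\times\Delta\mathbf{u}^\varepsilon\in L^r(0,T;L^2)$, all drift terms lie in $L^1(0,T;L^2)+L^2(0,T;H^{-1})$. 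Hence Itô's formula for $\|\mathbf{w}^\delta\|^2_{L^2}$ is justified, for instance by \cite[Theorem 3.2]{kry1981} in the pair $H^1\subset L^2\subset H^{-1}$ after a localization.

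\textbf{Step 1: localization.} For $R>0$ we introduce
$$\tau_R^\delta=\inf\Big\{t\in[0,T]:\ \|\mathbf{u}^{\varepsilon,\delta}(t)\|_{H^1}+\|\mathbf{u}^\varepsilon(t)\|_{H^1}+\int_0^t\big(\|\mathbf{u}^{\varepsilon,\delta}\|^2_{H^2}+\|\mathbf{u}^\varepsilon\|^2_{H^2}\big)ds>R\Big\}\wedge T .$$
By Chebyshev's inequality together with \eqref{pro2 1}, \eqref{pro2 2} and \eqref{wang2}, we have $\mathrm{P}(\tau_R^\delta<T)\le C/R$ uniformly in $\delta$.

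\textbf{Step 2: the energy estimate.} We apply Itô's formula to $\|\mathbf{w}^\delta\|^2_{L^2}$ and estimate each term.

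The Laplacian gives the good term $-2\|\nabla\mathbf{w}^\delta\|^2_{L^2}$.

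The viscous term contributes
$$-2\delta(\Delta^2\mathbf{u}^{\varepsilon,\delta},\mathbf{w}^\delta)=-2\delta(\nabla\Delta\mathbf{u}^{\varepsilon,\delta},\nabla\mathbf{w}^\delta)\le \tfrac12\|\nabla\mathbf{w}^\delta\|^2_{L^2}+C\delta^2\|\mathbf{u}^{\varepsilon,\delta}\|^2_{H^3}.$$
By \eqref{pro2 2}, $\delta^2\int_0^T\|\mathbf{u}^{\varepsilon,\delta}\|^2_{H^3}$ is of order $\delta$ in $L^1(\Omega)$, which is the source of smallness.

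For the gyromagnetic term we write
$$\mathbf{u}^{\varepsilon,\delta}\times\Delta\mathbf{u}^{\varepsilon,\delta}-\mathbf{u}^\varepsilon\times\Delta\mathbf{u}^\varepsilon=\mathbf{w}^\delta\times\Delta\mathbf{u}^{\varepsilon,\delta}+\mathbf{u}^\varepsilon\times\Delta\mathbf{w}^\delta .$$
The first piece is orthogonal to $\mathbf{w}^\delta$. For the second we integrate by parts:
$$(\mathbf{u}^\varepsilon\times\Delta\mathbf{w}^\delta,\mathbf{w}^\delta)=-(\partial_i\mathbf{u}^\varepsilon\times\partial_i\mathbf{w}^\delta,\mathbf{w}^\delta),$$
the term with $\mathbf{u}^\varepsilon\times\partial_i\mathbf{w}^\delta$ against $\partial_i\mathbf{w}^\delta$ vanishing. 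This is bounded by
$$\|\nabla\mathbf{u}^\varepsilon\|_{L^4}\|\nabla\mathbf{w}^\delta\|_{L^2}\|\mathbf{w}^\delta\|_{L^4}\le \tfrac12\|\nabla\mathbf{w}^\delta\|^2_{L^2}+C\|\mathbf{u}^\varepsilon\|^2_{H^1}\|\mathbf{u}^\varepsilon\|^2_{H^2}\|\mathbf{w}^\delta\|^2_{L^2},$$
using Ladyzhenskaya's inequality in 2D twice.

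The cubic term is monotone up to a term bounded by
$$C\big(\|\mathbf{u}^{\varepsilon,\delta}\|^2_{L^\infty}+\|\mathbf{u}^\varepsilon\|^2_{L^\infty}\big)\|\mathbf{w}^\delta\|^2_{L^2}.$$
Agmon's inequality bounds the $L^\infty$ norms by $\|\cdot\|_{H^1}\|\cdot\|_{H^2}$, which is integrable in time up to $\tau^\delta_R$.

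The Itô correction term and the quadratic variation are linear in $\mathbf{w}^\delta$ and bounded by $C\|\mathbf{w}^\delta\|^2_{L^2}$ thanks to \eqref{1.1}. The martingale is handled by the Burkholder--Davis--Gundy inequality.

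\textbf{Step 3: conclusion.} Applying the stochastic Gronwall lemma up to $\tau^\delta_R$, whose random exponent is bounded by $C(R)$, and noting that $\mathbf{w}^\delta(0)=0$, we obtain
$$\mathrm{E}\Big(\sup_{t\le\tau_R^\delta}\|\mathbf{w}^\delta(t)\|^2_{L^2}+\int_0^{\tau_R^\delta}\|\nabla\mathbf{w}^\delta\|^2_{L^2}\,dt\Big)\le C(R,T)\,\delta .$$
Combining this with $\mathrm{P}(\tau_R^\delta<T)\le C/R$, we first let $\delta\to0$ and then $R\to\infty$. This gives convergence in probability in $C([0,T];L^2)\cap L^2(0,T;H^1)$.

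The main obstacle is Step 2's rigorous justification. Itô's formula for the difference requires the drift of $\mathbf{u}^\varepsilon$ to pair with $\mathbf{w}^\delta\in L^2(0,T;H^1)$. The term $\mathbf{u}^\varepsilon\times\Delta\mathbf{u}^\varepsilon$ belongs only to $L^r(0,T;L^2)$ with $r<2$, but it pairs with $\mathbf{w}^\delta\in L^\infty(0,T;L^2)$ in the $L^2$ inner product, so the $L^2$-level Itô formula (not $H^1$) is available. Alternatively, we can first apply the formula to the Galerkin/domain approximations and pass to the limit. The second delicate point is the cancellation in the gyromagnetic term, which avoids placing any norm stronger than $H^2$ on $\mathbf{u}^\varepsilon$.
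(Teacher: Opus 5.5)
Your proof is correct and takes a genuinely different route from the paper.

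\textbf{The paper's route.} It argues by compactness. The $L^2$ tail-ends estimate of Lemma~\ref{tail 6} and the bounds of Proposition~\ref{pro2} give tightness of the laws of $(\mathbf{u}^{\varepsilon,\delta},W)$ in $C([0,T];L^2(\mathbb{R}^2))\cap L^2(0,T;H^1(\mathbb{R}^2))$ (Lemma~\ref{tight1}). Skorokhod's theorem then identifies every limit point as a martingale solution of \eqref{1.3} (Lemma~\ref{sj1*}). Pathwise uniqueness plus the Gy\"{o}ngy--Krylov lemma upgrades this to convergence in probability on the original basis.

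\textbf{Your route.} You estimate $\mathbf{w}^\delta=\mathbf{u}^{\varepsilon,\delta}-\mathbf{u}^\varepsilon$ directly in $L^2$, the same kind of difference estimate the paper carries out in Lemma~\ref{ucsp} for $\varepsilon\to\varepsilon_0$. This bypasses the tail-ends estimates, Skorokhod and Gy\"{o}ngy--Krylov entirely. It also works on the original probability space and gives an explicit rate in $\delta$. The paper's route neither presupposes the limit solution nor relies on an exact cancellation in the difference equation. It produces the limit as a martingale solution, so it doubles as an existence proof by vanishing viscosity. Here, however, existence and uniqueness of $\mathbf{u}^\varepsilon$ are taken from \cite{Q1} anyway.

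\textbf{Further simplifications.} Your argument can be shortened:
\begin{enumerate}
\item The noise in the $\mathbf{w}^\delta$-equation is $\varepsilon\sum_k(\mathbf{w}^\delta\times\mathbf{f}_k)\,dW_k$, which is pointwise orthogonal to $\mathbf{w}^\delta$. So the stochastic integral in the It\^{o} formula vanishes identically.
\item The It\^{o} correction contributes $-\varepsilon^2\sum_k\|\mathbf{w}^\delta\times\mathbf{f}_k\|^2_{L^2(\mathbb{R}^2)}$, which cancels the quadratic variation exactly.
\item The cubic term is monotone, since $(|a|^2a-|b|^2b)\cdot(a-b)\ge 0$ for $a,b\in\mathbb{R}^3$; Agmon's inequality is not needed.
\item Consequently the energy inequality holds pathwise. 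Its Gronwall exponent $C\int_0^T\|\mathbf{u}^\varepsilon\|^2_{H^1}\|\mathbf{u}^\varepsilon\|^2_{H^2}\,ds$ is an a.s.\ finite random variable depending only on $\mathbf{u}^\varepsilon$. The forcing $\delta^2\int_0^T\|\mathbf{u}^{\varepsilon,\delta}\|^2_{H^3}\,ds$ tends to $0$ in $L^1(\Omega)$. Hence no stopping time and no BDG inequality are needed.
\end{enumerate}

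\textbf{Justification of It\^{o}'s formula.} The $L^1(0,T;L^2)$ component in your justification is unnecessary. In two dimensions, $L^{4/3}\hookrightarrow H^{-1}$ gives $\|\mathbf{u}^\varepsilon\times\Delta\mathbf{u}^\varepsilon\|_{H^{-1}}\le C\|\mathbf{u}^\varepsilon\|_{L^4}\|\Delta\mathbf{u}^\varepsilon\|_{L^2}$. So every drift term lies in $L^2(0,T;H^{-1}(\mathbb{R}^2))$, and the standard formula for the triple $H^1\subset L^2\subset H^{-1}$ applies directly.
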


In order to prove Proposition
 \ref{pro3.1}, we
 need 
 the following   uniform tail-ends estimates of solutions   of \eqref{1.3*}
 to overcome
 non-compactness of
 Sobolev embeddings
 on unbounded domains.

 \begin{lemma}\label{tail 6}
 If  \eqref{1.1} holds, then for every
 $\epsilon'>0$,
 $T>0$  and
 $\mathbf{u}_0
 \in H^1
 (\mathbb{R}^2)$, 
 there exists $J=J(\epsilon', T, \mathbf{u}_0)
 \ge 1$ such that for all
 $j\ge J$,  $\varepsilon
 \in [0,1]$ and
 $ \delta\in (0,1]$, the solution $\mathbf{u}^{\varepsilon, \delta}(t,
\mathbf{u}_0 )$
 of \eqref{1.3*} satisfies:
$$
 \mathrm{E}
\left ( \sup_{t\in [0,T]}
 \int_{ |x|>j }
 |
 {\mathbf{u}}^{\varepsilon, \delta}(t,
\mathbf{u}_0 )(x) |^2 dx
\right )  <\epsilon'.
$$
\end{lemma}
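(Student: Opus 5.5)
We will apply It\^{o}'s formula to the weighted functional $\int_{\R^2}\rho_j(x)|\mathbf{u}^{\varepsilon,\delta}(t,x)|^2dx$. Here $\rho_j(x)=1-\theta(x/j)$, with $\theta$ the cut-off from \eqref{cutoff}, so $\rho_j$ vanishes for $|x|\le j/2$ and equals one for $|x|\ge 3j/4$. We also have $\|\nabla\rho_j\|_\infty+\|\Delta\rho_j\|_\infty+\|\nabla\Delta\rho_j\|_\infty\le c/j$, uniformly in $j\ge1$. This is legitimate in $L^2$: by Proposition \ref{pro2} the drift lies in $L^2(0,T;H^{-1})$ and the solution lies in $L^2(0,T;H^1)$. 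Moreover, $\rho_j\mathbf{u}\in L^2(0,T;H^1)$, so the standard Krylov--Rozovskii energy equality applies to $\rho_j\mathbf{u}$ paired with the equation. In fact the solution has $H^3$ regularity, so all pairings are genuine integrals.

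We then expand term by term, writing $\mathbf{u}=\mathbf{u}^{\varepsilon,\delta}$. The Laplacian gives $-2\int\rho_j|\nabla\mathbf{u}|^2-2\int(\nabla\rho_j\cdot\nabla)\mathbf{u}\cdot\mathbf{u}$, and the second term is bounded by $\frac cj\|\mathbf{u}\|_{H^1}^2$. For the biharmonic term we integrate by parts twice:
$$-2\delta\int\Delta\mathbf{u}\cdot\Delta(\rho_j\mathbf{u})=-2\delta\int\rho_j|\Delta\mathbf{u}|^2-2\delta\int\Delta\mathbf{u}\cdot\big(2\nabla\rho_j\cdot\nabla\mathbf{u}+\Delta\rho_j\,\mathbf{u}\big),$$
and the error is at most $\frac{c\delta}{j}\|\mathbf{u}\|_{H^2}^2$. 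The crucial point is that the gyromagnetic term vanishes pointwise, since $(\mathbf{u}\times\Delta\mathbf{u})\cdot\mathbf{u}=0$. This is exactly why we work in $L^2$ rather than $H^1$, and it avoids the lack of $L^2_tL^2_x$ control of $\mathbf{u}\times\Delta\mathbf{u}$ when $\delta\to0$. The damping term gives $-2\int\rho_j(1+|\mathbf{u}|^2)|\mathbf{u}|^2\le -2\int\rho_j|\mathbf{u}|^2$. The It\^{o} correction term $\frac{\varepsilon^2}{2}((\mathbf{u}\times\mathbf{f}_k)\times\mathbf{f}_k)\cdot\mathbf{u}=-\frac{\varepsilon^2}{2}|\mathbf{u}\times\mathbf{f}_k|^2$, combined with the quadratic variation $\varepsilon^2\int\rho_j|\mathbf{u}\times\mathbf{f}_k+\mathbf{f}_k|^2$, leaves at most
$$c\sum_k\|\mathbf{f}_k\|_\infty^2\int\rho_j|\mathbf{u}|^2+c\sum_k\int_{|x|\ge j/2}|\mathbf{f}_k|^2dx.$$
The last sum tends to zero as $j\to\infty$ by \eqref{1.1} and dominated convergence.

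To handle the term $c\int\rho_j|\mathbf{u}|^2$ without losing uniformity, we simply use Gronwall on $[0,T]$; we do not need decay in time, because $T$ is fixed. We take the supremum in time and then expectation. The martingale term $2\varepsilon\sum_k\int_0^t\int\rho_j(\mathbf{u}\times\mathbf{f}_k+\mathbf{f}_k)\cdot\mathbf{u}\,dx\,dW_k=2\varepsilon\sum_k\int_0^t\int\rho_j\mathbf{f}_k\cdot\mathbf{u}\,dx\,dW_k$ is controlled by the Burkholder--Davis--Gundy inequality by
$$c\,\mathrm{E}\Big(\int_0^T\sum_k\|\mathbf{f}_k\|_{L^2(|x|\ge j/2)}^2\|\mathbf{u}\|^2_{L^2}dt\Big)^{1/2},$$
which is small by \eqref{pro2 1} and the tail smallness of $\mathbf{f}_k$. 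Alternatively, one can bound it by $\frac12\mathrm{E}\sup_t\int\rho_j|\mathbf{u}|^2$ plus small terms. The initial term is $\int\rho_j|\mathbf{u}_0|^2\le\int_{|x|\ge j/2}|\mathbf{u}_0|^2\to0$. All the error terms are bounded by $\frac cj\,\mathrm{E}\int_0^T(\|\mathbf{u}\|_{H^1}^2+\delta\|\mathbf{u}\|_{H^2}^2)dt\le \frac{c M_1}{j}$, uniformly in $\varepsilon,\delta$ by \eqref{pro2 1}--\eqref{pro2 2}.

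Collecting these bounds gives
$$\mathrm{E}\sup_{t\le T}\int\rho_j|\mathbf{u}|^2\le C_T\Big(\int_{|x|\ge j/2}|\mathbf{u}_0|^2+\frac{c}{j}+\sum_k\|\mathbf{f}_k\|^2_{L^2(|x|\ge j/2)}+\Big(\sum_k\|\mathbf{f}_k\|^2_{L^2(|x|\ge j/2)}\Big)^{1/2}\Big).$$
This is less than $\epsilon'$ for all $j\ge J$, and since $\rho_j=1$ on $|x|>j$, the lemma follows. We expect the main obstacle to be the sup-in-time step with the stochastic integral and Gronwall. To handle it, we first bound $\sup_t$ of the non-martingale part pathwise via Gronwall, as $e^{cT}$ times the data plus $\sup_t|M_t|$, and only then take expectations and apply BDG to $\sup_t|M_t|$. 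Uniformity in $\delta$ of the biharmonic error relies on the factor $\delta$ matching the $\delta\|\mathbf{u}\|_{H^3}^2$ bound in \eqref{pro2 2}; in fact only $\delta\|\mathbf{u}\|_{H^2}^2$ is needed, so this step is safe.
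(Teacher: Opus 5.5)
Your proposal is correct and follows essentially the same route as the paper. Both arguments use It\^o's formula for a cut-off weighted $L^2$ norm and the pointwise vanishing of $(\mathbf{u}\times\Delta\mathbf{u})\cdot\mathbf{u}$. Both control the $O(1/j)$ commutator errors by the uniform bounds of Proposition~\ref{pro2}, and both finish with the tail smallness of $\mathbf{f}_k$ and $\mathbf{u}_0$, BDG and Gronwall.

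The differences are minor:
\begin{itemize}
\item You integrate the biharmonic term by parts twice, so only the $H^2$ bound is needed; the paper keeps a $\nabla\Delta\mathbf{u}$ factor and bounds it by $\frac{c}{j}\delta\|\mathbf{u}\|^2_{H^3}$.
\item You apply Gronwall pathwise before BDG, rather than to the expectation.
\item Your It\^o-correction bookkeeping drops a factor $2$. This is harmless: the correction and the quadratic variation actually combine exactly to $\varepsilon^2\sum_k\int\rho_j|\mathbf{f}_k|^2dx$ (cf.\ the proof of Lemma~\ref{tail_l2}).
\end{itemize}
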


\begin{proof}
 Given $j\in \mathbb{N}$, denote by
   $\phi_j(x)=\phi(\frac{x}{j})$, where $\phi (x) =1-\theta (x)$ for all
$x\in \mathbb{R}^2$, and $\theta$ is
the smooth function given by \eqref{cutoff}.
By
     It\^{o}'s formula,  we  get
     from \eqref{1.3*} that
 for all $r, t\in [0,T]$
     with $r\le t$,  
  $$ \|\phi_j {\mathbf{u}}^{\varepsilon, \delta}
(r)
\|_{L^2
(\R^2)   }^2
=
\|\phi_j\mathbf{ u}_0
\|_{L^2
(\R^2) }^2  
 -2\int_0^r ( \delta\Delta^2 \mathbf{ u}^{
 \varepsilon, \delta}
(s), \phi^2_j\mathbf{ u}^{ \varepsilon, \delta}(s ) )_{(H^{-1}
(\R^2),   H^{1}
(\R^2))  } ds
$$
$$+2\int_0^r ( \Delta \mathbf{ u}^{ \varepsilon, \delta}
(s), \phi^2_j\mathbf{ u}^{ \varepsilon, \delta}(s ) )_{L^2
(\R^2) } ds
+2
\int_0^r
( \mathbf{ u}^{ \varepsilon, \delta}(s)\times \Delta \mathbf{ u}^{ \varepsilon, \delta}
(s), \phi^2_j\mathbf{ u}^{ \varepsilon, \delta}(s) )_{L^2
(\R^2) } ds$$
$$-2
\int_0^r
((1+|\mathbf{ u}^{ \varepsilon, \delta }(s)|^{2})\mathbf{u}^{ \varepsilon, \delta}(s),
\phi^2 _j\mathbf{ u}^{ \varepsilon, \delta}(s) )_{L^2
(\R^2) }ds
+2\varepsilon\sum_{k=1}^\infty
\int_0^r
(\phi_j \mathbf{f}_k  , \phi_j\mathbf{ u}^{ \varepsilon, \delta}(s) )_{L^2
(\R^2 )}
dW_k
$$
\be\label{tail_1 p1}
+\varepsilon^2\sum_{k=1}^\infty
\int_0^r
\|\phi_j (\mathbf{ u}^{ \varepsilon, \delta}(s) \times \mathbf{f}_k+\mathbf{f}_k)\|_{L^2
(\R^2) }^2ds
+\varepsilon^2 
\sum_{k=1}^\infty\int_0^r
(
(\mathbf{ u}^{ \varepsilon, \delta}(s) \times \mathbf{f}_k)\times \mathbf{f}_k, \phi^2_j\mathbf{u}^{ \varepsilon, \delta}(s))_{L^2
(\R^2) }ds.
\ee
For the artificial viscosity term, we have
$$
-2\int_0^r ( \delta\Delta^2 \mathbf{ u}^{
 \varepsilon, \delta}
(s), \phi^2_j\mathbf{ u}^{ \varepsilon, \delta}(s ) )_{(H^{-1}
(\R^2),   H^{1}
(\R^2))  } ds 
$$
$$
=2\int_0^r ( \delta\nabla\Delta \mathbf{ u}^{ \varepsilon, \delta}
(s), \nabla(\phi^2_j\mathbf{ u}^{ \varepsilon, \delta}(s )))_{L^{2}
(\R^2 ) } ds
$$
$$
=\frac{4}{j}\int_0^r ( \delta\nabla\Delta \mathbf{ u}^{ \varepsilon, \delta}
(s), \phi_j
\nabla \phi({x/j})\mathbf{ u}^{ \varepsilon, \delta}(s))_{L^{2}
(\R^2) } ds+2\int_0^r ( \delta\nabla\Delta \mathbf{ u}^{ \varepsilon, \delta}
(s), \phi^2_j\nabla\mathbf{ u}^{ \varepsilon, \delta}(s ))_{L^{2}
(\R^2) } ds
$$
$$
=\frac{4}{j}\int_0^r \delta( \nabla\Delta \mathbf{ u}^{ \varepsilon, \delta}
(s), \phi_j
\nabla \phi({x/j})\mathbf{ u}^{ \varepsilon, \delta}(s))_{L^{2}
(\R^2) } ds-2\int_0^r \delta(\Delta \mathbf{ u}^{ \varepsilon, \delta}
(s), \phi^2_j\Delta\mathbf{ u}^{ \varepsilon, \delta}(s))_{L^{2}
(\R^2) } ds
$$
$$
- {\frac 4j}
\int_0^r \delta (\Delta \mathbf{ u}^{ \varepsilon, \delta}
(s), \phi_j \nabla\mathbf{ u}^{ \varepsilon, \delta}(s )
 \nabla\phi (x/j)
)_{L^{2}
(\R^2) }  ds
$$
$$
\le \frac{4}{j}\int_0^r \delta( \nabla\Delta
 \mathbf{ u}^{ \varepsilon, \delta}
(s), \phi_j
\nabla \phi({x/j})\mathbf{ u}^{ \varepsilon, \delta}(s))_{L^{2}
(\R^2) } ds
 - {\frac 4j}
\int_0^r \delta (\Delta \mathbf{ u}^{ \varepsilon, \delta}
(s), \phi_j \nabla\mathbf{ u}^{ \varepsilon, \delta}(s )
 \nabla\phi (x/j)
)_{L^{2}
(\R^2) }  ds
$$
 \begin{align}\label{tail_1 p}
\leq \frac{c_1}{j}\int_0^r \delta\|\mathbf{ u}^{ \varepsilon, \delta}
(s)\|^2_{H^{3}
(\R^2) } ds,
\end{align}
where $c_1>0$ is
a constant  independent of $j,  \delta$
and $ \varepsilon$.

The remaining terms on the right-hand side of \eqref{tail_1 p1} can be
estimated  as \cite[(2.9)-(2.12)]{Q1}
for which we have 
$$2\int_0^r ( \Delta \mathbf{ u}^{ \varepsilon, \delta}
(s), \phi^2_j\mathbf{ u}^{ \varepsilon, \delta}(s ) )_{L^2
(\R^2) } ds
+2
\int_0^r
( \mathbf{ u}^{ \varepsilon, \delta}(s)\times \Delta \mathbf{ u}^{ \varepsilon, \delta}
(s), \phi^2_j\mathbf{ u}^{ \varepsilon, \delta}(s) )_{L^2
(\R^2) } ds
$$
$$
+\varepsilon^2\sum_{k=1}^\infty
\int_0^r
\|\phi_j (\mathbf{ u}^{ \varepsilon, \delta}(s) \times \mathbf{f}_k+\mathbf{f}_k)\|_{L^2
(\R^2) }^2ds
+\varepsilon^2 \sum_{k=1}^\infty\int_0^r
(
(\mathbf{ u}^{ \varepsilon, \delta}(s) \times \mathbf{f}_k)\times \mathbf{f}_k, \phi^2_j\mathbf{u}^{ \varepsilon, \delta}(s))_{L^2
(\R^2) }ds
$$
$$
-2
\int_0^r
((1+|\mathbf{ u}^{ \varepsilon, \delta}(s)|^{2})\mathbf{u}^{ \varepsilon, \delta}(s),
\phi^2 _j\mathbf{ u}^{ \varepsilon, \delta}(s) )_{L^2
(\R^2) }ds
$$
\begin{align}
\leq \frac{c_2}{j}
\int_0^r
\|\mathbf{u}^{ \varepsilon, \delta} (s) \|_{H^1
 (\R^2) }^2 ds+3 \sum_{k=1}^\infty\|\mathbf{f}_k\|^2_{L^\infty
(\mathbb{R} ^2) }
\int_0^r \|\phi_j
\mathbf{u}^{ \varepsilon, \delta}(s) \|_{L^2
(\R^2)
}^2 ds
+ 2r \sum_{k=1}^\infty\|\phi_j \mathbf{f}_k\|
_{L^2(\R^2) }^2,
\end{align}
and
$$
2
\varepsilon\mathrm{E}
\left (\sup_{0\le r\le t}
\left |\int_0^r
\sum_{k=1}^\infty
 (\phi_j \mathbf{f}_k  , \phi_j\mathbf{u}^{ \varepsilon, \delta}(s) )_{L^2
(\R^2) }
dW_k \right |
\right )
$$
\be\label{tail_1 p3a}
\le
c_3 \sum_{k=1}^\infty
 \| \phi_j \mathbf{f}_k\|^2_{L^2(\mathbb{R}^2) }
 + c_3
 \int_0^t
 \mathrm{E}
\left ( \| \phi_j\mathbf{u}^{ \varepsilon, \delta}(s)
\|^2 _{L^2
(\R^2) }
\right )
ds,
\ee
where $c_2$ and $ c_3 $ are
positive numbers
 independent of $j,\delta$
 and $\varepsilon$.

It follows from 
  \eqref{tail_1 p1}-\eqref{tail_1 p3a}
  that
for all $t\in [0,T]$ and $j \in \mathbb{N}$, $\varepsilon\in [0,1]$ and
 $\delta\in (0,1]$,
$$
 \mathrm{E}
 \left (\sup_{0\le r \le t}
  \|\phi_j \mathbf{u}^{ \varepsilon, \delta} (r)
 \|_{L^2(\R^2)
 }^2
 \right )
 \leq
 \mathrm{E}
 \left (
 \|\phi_j \mathbf{u}_0
 \|_{L^2(\R^2)
 }^2
 \right )
 $$
 $$
 +\left (c_3
 +
  3\sum_{k=1}^\infty\|\mathbf{f}_k\|^2_{L^\infty
(\mathbb{R} ^2) }
\right )
\int_0^t
 \mathrm{E}
 \left (
 \|\phi_j
\mathbf{u}^{ \varepsilon, \delta} (s) \|_{L^2
(\R^2)
}^2
\right ) ds + (2T + c_3) \sum_{k=1}^\infty\|\phi_j \mathbf{f}_k\|
_{L^2(\mathbb{R}^2)  }^2
$$
\begin{equation}\label{tail_1 p5}
 +
 \frac{c_2}{j}
 \int_0^t
 \mathrm{E}
 \left (
 \|\mathbf{u}^{ \varepsilon, \delta} (s) \|_{H^1(\R^2) }^2
 \right ) ds+\frac{c_1}{j}\int_0^t\mathrm{E}
  \left(\delta\|\mathbf{ u}^{ \varepsilon, \delta}
(s)\|^2_{H^{3}
(\R^2) }\right) ds.
 \end{equation}
By \eqref{tail_1 p5} and \eqref{pro2 1}-\eqref{pro2 2} with $p=1$,  we infer that  there exists
$c_4 = c_4 (T, \mathbf{u}_0)>0$ such that
for all $t\in [0,T]$,
  $ j \in \mathbb{N}$, $\varepsilon\in [0,1]$
  and  $\delta\in (0,1]$,
$$
 \mathrm{E}
 \left (\sup_{0\le r \le t}
  \|\phi_j \mathbf{u}^{ \varepsilon, \delta} (r)
 \|_{L^2(\R^2)
 }^2
 \right )
 \leq
 \mathrm{E}
 \left (
 \|\phi_j \mathbf{u}_0
 \|_{L^2(\R^2)
 }^2
 \right ) +
 \frac{c_2c_4}{j}
 +\frac{c_1c_4}{j}
 $$
 \begin{equation}\label{tail_1 p6}
 +\left (c_3
 +
  3\sum_{k=1}^\infty\|\mathbf{f}_k\|^2_{L^\infty
(\mathbb{R} ^2) }
\right )
\int_0^t
 \mathrm{E}
 \left (
 \|\phi_j
\mathbf{u}^{ \varepsilon, \delta} (s) \|_{L^2
(\R^2)
}^2
\right ) ds + (2T + c_3) \sum_{k=1}^\infty\|\phi_j \mathbf{f}_k\|
_{L^2(\mathbb{R}^2)  }^2.
 \end{equation}
By \eqref{tail_1 p6}
  and Gronwall's lemma we obtain that
  for all $t\in [0,T]$
and $j \in \mathbb{N}$, $\varepsilon\in [0,1]$
and  $\delta\in (0,1]$,
\be\label{tail_1 p7}
 \mathrm{E}
 \left (\sup_{0\le r\le t}
 \|\phi_j \mathbf{u}^{ \varepsilon, \delta} (r)
 \|_{L^2(\R^2)
 }^2
 \right )
 \leq
 \left (
 \|\phi_j \mathbf{u}_0
 \|_{L^2(\mathbb{R}^2 )
 }^2
 + (2T+ c_3)  \sum_{k=1}^\infty\|\phi_j \mathbf{f}_k\|
_{L^2(\mathbb{R}^2)  }^2
 +
 \frac{(c_1+c_2)c_4}{j}
 \right )
 e^{ c_5 T
 },
 \ee
where $c_5
=   c_3 + 3\sum_{k=1}^\infty\|\mathbf{f}_k\|^2_{L^\infty
(\mathbb{R} ^2) }
 $. Note that the right-hand side term of \eqref{tail_1 p7} goes to zero as $j\rightarrow\infty$, which implies that for every $\epsilon'>0$,
there exists $J=J(\epsilon', T, \mathbf{u}_0)\ge 1$
such that
for all $j\ge J$,  
 $\varepsilon\in [0,1]$, $\delta\in (0,1]$ and $t\in [0,T]$,
$$
 \mathrm{E}
 \left (\sup_{0\le r\le t}
 \|\phi_j
  \mathbf{u}^{ \varepsilon, \delta
 } (r, \mathbf{u}_0 )
 \|_{L^2(\R^2)
 }^2
 \right )
 <\epsilon',
$$
which completes the proof.
\end{proof}

Next, we discuss the tightness
of the distributions of the family
$\{
\mathbf{u}^{ \varepsilon, \delta
 }\}_{0<\delta\le 1}$
of solutions of the
viscous equation
\eqref{1.3*} for every fixed
 $\varepsilon \in [0,1]$.

\begin{lemma} \label{tight1}
If \eqref{1.1} holds, 
$ \mathbf{u}_0 \in H^1(\R^2)$
and   $\varepsilon\in [0,1]$,
then
 the family
of
distributions of
$
\{ (
\mathbf{u}^{ 
\varepsilon, \delta
 }, W) \}_{0<\delta\le 1}$ 
 is tight in the   space  $(C([0,T]; L^2 (\mathbb{R}^2))\cap  L^2(0,T;H^1 (\mathbb{R}^2)))\times C([0,T]; \mathbb{R}^\infty)$.
\end{lemma}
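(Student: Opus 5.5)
Since $W$ has a fixed law on $C([0,T];\mathbb{R}^\infty)$ (a Polish space), its distribution is automatically tight. It therefore suffices to prove tightness of the laws of $\{\mathbf{u}^{\varepsilon,\delta}\}_{0<\delta\le 1}$ in $C([0,T];L^2(\mathbb{R}^2))\cap L^2(0,T;H^1(\mathbb{R}^2))$, because a product of tight families is tight. The plan is to build, for each $\eta>0$, a set $K_\eta$ that is compact in both component topologies and satisfies $\mathrm{P}(\mathbf{u}^{\varepsilon,\delta}\notin K_\eta)<\eta$ uniformly in $\delta$. The constraints defining $K_\eta$ come from three sources: the uniform bounds \eqref{pro2 1}--\eqref{pro2 3} of Proposition \ref{pro2}, which give local compactness, and the tail estimate of Lemma \ref{tail 6}, which handles the unbounded domain.

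\textbf{Step 1 (local compactness).} For $n\in\mathbb{N}$ let $\mathscr{D}_n=\{|x|<n\}$. By Chebyshev and \eqref{pro2 1}, \eqref{pro2 2}, \eqref{pro2 3}, for each $R>0$, outside an event of probability $\le C/R$ we have
\[
\sup_t\|\mathbf{u}\|_{H^1}\le R,\qquad \int_0^T\|\mathbf{u}\|_{H^2}^2dt\le R,\qquad \|\mathbf{u}\|_{W^{\alpha,q}(0,T;H^{-1})}\le R .
\]
On each bounded $\mathscr{D}_n$ the embeddings $H^2(\mathscr{D}_n)\hookrightarrow H^1(\mathscr{D}_n)$ and $H^1(\mathscr{D}_n)\hookrightarrow L^2(\mathscr{D}_n)$ are compact. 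Combined with the time regularity $W^{\alpha,q}(0,T;H^{-1})$ with $\alpha q>1$, the Aubin--Lions--Simon lemma then shows that the restrictions to $\mathscr{D}_n$ are relatively compact in $L^2(0,T;H^1(\mathscr{D}_n))$ and in $C([0,T];L^2(\mathscr{D}_n))$. For the latter space we use the $L^\infty H^1$ bound together with the Hölder continuity in $H^{-1}$ provided by $W^{\alpha,q}\hookrightarrow C^{\alpha-1/q}$, and interpolate $\|\cdot\|_{L^2}^2\le \|\cdot\|_{H^{-1}}\|\cdot\|_{H^1}$.

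\textbf{Step 2 (tails).} By Lemma \ref{tail 6}, for each $m$ there is $j_m$ with $\mathrm{E}\sup_t\int_{|x|>j_m}|\mathbf{u}^{\varepsilon,\delta}|^2dx<\eta 4^{-m}$ for all $\delta$. Markov's inequality then gives $\mathrm{P}(\sup_t\int_{|x|>j_m}|\mathbf{u}|^2>2^{-m})<\eta 2^{-m}$, and summing over $m$ costs at most $\eta$ in probability. We define $K_\eta$ as the intersection of the bounded set from Step 1 (with $R=R(\eta)$) with $\{\sup_t\int_{|x|>j_m}|\mathbf{u}|^2\le 2^{-m}\ \forall m\}$. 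Its closure should then be compact in $C([0,T];L^2(\mathbb{R}^2))$: given a sequence in $K_\eta$, Step 1 and a diagonal extraction over $n$ give a subsequence converging in $C([0,T];L^2(\mathscr{D}_n))$ for every $n$, and the uniform tail bound upgrades this to convergence on all of $\mathbb{R}^2$.

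\textbf{Step 3 (the $H^1$ tails, main obstacle).} Lemma \ref{tail 6} controls only the $L^2$ tail, while compactness in $L^2(0,T;H^1(\mathbb{R}^2))$ also requires smallness of $\int_0^T\int_{|x|>j}|\nabla\mathbf{u}|^2$. I would obtain this by interpolation on the exterior region. With $\phi_j$ as in the proof of Lemma \ref{tail 6},
\[
\|\nabla(\phi_j\mathbf{u})\|_{L^2}^2\le C\|\phi_j\mathbf{u}\|_{L^2}\,\|\phi_j\mathbf{u}\|_{H^2}\le C\|\phi_j \mathbf{u}\|_{L^2}\|\mathbf{u}\|_{H^2}.
\]
Integrating in time and applying Cauchy--Schwarz gives
\[
\int_0^T\|\nabla(\phi_j\mathbf{u})\|^2dt\le C\,T^{1/2}\sup_t\|\phi_j\mathbf{u}\|_{L^2}\Big(\int_0^T\|\mathbf{u}\|_{H^2}^2\Big)^{1/2},
\]
which is small on $K_\eta$ because the $L^2$ tail is small and the $L^2H^2$ norm is bounded by $R$. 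Hence the same diagonal argument yields convergence in $L^2(0,T;H^1(\mathbb{R}^2))$. This interpolation is the delicate point, since we need uniform-in-$\delta$ $H^2$ bounds; these are exactly what \eqref{pro2 2} supplies.

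Putting the steps together, $\mathrm{P}(\mathbf{u}^{\varepsilon,\delta}\notin \overline{K_\eta})\le C/R+\eta$ uniformly in $\delta$, and choosing $R$ large concludes the proof.
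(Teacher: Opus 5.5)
Your proof is correct, but it takes a different, more localized route than the paper. You use Aubin--Lions--Simon on each ball $\mathscr{D}_n$ (in both $L^2(0,T;H^1(\mathscr{D}_n))$ and $C([0,T];L^2(\mathscr{D}_n))$), a diagonal extraction over $n$, and uniform tail smallness in both $L^2$ and $H^1$, with the $H^1$ tails obtained by interpolating $\phi_j\mathbf{u}$ between $L^2$ and $H^2$.

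The paper instead argues globally. It applies Arzel\`a--Ascoli once in $C([0,T];H^{-1}(\mathbb{R}^2))$: equicontinuity comes from $W^{\alpha,q}\hookrightarrow C^{\alpha-1/q}$, and the $L^2$ tails of Lemma~\ref{tail 6}, with the compact embedding on one large ball, give only pointwise-in-time precompactness in $L^2(\mathbb{R}^2)$. It then upgrades the convergence twice using the global inequalities $\|w\|_{L^2}^2\le\|w\|_{H^{-1}}\|w\|_{H^1}$ and $\|w\|_{H^1}^2\le C\|w\|_{L^2}\|w\|_{H^2}$, applied to the difference $w=\mathbf{v}_n-\mathbf{v}$. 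This needs only $L^2$ tails and no compactness on subdomains.

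In particular, the step you call the main obstacle (Step~3) can be dropped. Once Step~2 gives convergence $\mathbf{u}_k\to\mathbf{u}$ in $C([0,T];L^2(\mathbb{R}^2))$, apply your interpolation inequality to $\mathbf{u}_k-\mathbf{u}$ rather than to $\phi_j\mathbf{u}$. This gives
$\int_0^T\|\mathbf{u}_k-\mathbf{u}\|_{H^1}^2\,dt\le C\,T^{1/2}\sup_t\|\mathbf{u}_k-\mathbf{u}\|_{L^2}\,\|\mathbf{u}_k-\mathbf{u}\|_{L^2(0,T;H^2)}\to 0$,
since the limit inherits the $L^2(0,T;H^2)$ bound by weak lower semicontinuity. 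This also makes the $L^2(0,T;H^1(\mathscr{D}_n))$ part of Step~1 unnecessary. Your version is more modular and gives explicit $H^1$ tail smallness on $K_\eta$, which is harmless but more than the lemma needs.

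One minor technical point: on $\mathscr{D}_n$ the inequality $\|w\|_{L^2}^2\le\|w\|_{H^{-1}}\|w\|_{H^1}$ requires $w\in H^1_0(\mathscr{D}_n)$; it fails, for example, for constants. For compactness in $C([0,T];L^2(\mathscr{D}_n))$, either invoke Simon's criterion directly ($L^\infty(0,T;H^1)$ bound plus equicontinuity in $H^{-1}$), or interpolate $\psi\mathbf{u}$ on $\mathbb{R}^2$ with a cutoff $\psi$.
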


\begin{proof}  For every  $R>0$,
 by Chebyshev's
 inequality and \eqref{pro2 1}-\eqref{pro2 3}
  we deduce that for
 every
 $T>0$, $\alpha \in (0,{\frac 12})$
 and $q\ge 2$ with $\alpha -{\frac 1q} >0$,
\begin{align}\label{tight1 p1a}
&\mathrm{P}
\left(
\sup_{t\in [0,T]}\|
 \mathbf{u}^{ \varepsilon, \delta}
 (t)
  \|^2_{H^1(\mathbb{R}^2) }
  +\int_{0}^{T}\|  \mathbf{u}^{ \varepsilon, \delta} (t)
  \|^2_{H^2(\mathbb{R}^2)}dt
  +\|
  \mathbf{u}^{ \varepsilon, \delta}
  \|_{W^{\alpha, q}
  	(0,T; H^{-1} (\mathbb{R}^2 ))}
> R\right)\nonumber\\
&\leq \frac{\mathrm{E}
\left(
\sup_{t\in [0,T]}\|
 \mathbf{u}^{ \varepsilon, \delta}
 (t)
  \|^2_{H^1(\mathbb{R}^2) }
  +\int_{0}^{T}\|  \mathbf{u}^{ \varepsilon, \delta} (t)
  \|^2_{H^2(\mathbb{R}^2)}dt
  +\|
  \mathbf{u}^{ \varepsilon, \delta}
  \|_{W^{\alpha, q}
  	(0,T; H^{-1} (\mathbb{R}^2 ))}  
 \right)}{R}
  \nonumber\\
&\leq \frac{c_1}{R} ,
\end{align}
where $c_1=c_1 (T, \mathbf{u}_0, \alpha, q)>0$ is
a number  independent of $R,   \delta
$ and  $ \varepsilon$.
Therefore, for every
$\epsilon'>0$, 
by \eqref{tight1 p1a}
 we find that there exists
$R(\epsilon')>0$ such  that
\begin{align}\label{tight1 p1}
&\mathrm{P}
\left(
\sup_{t\in [0,T]}\|
 \mathbf{u}^{ \varepsilon, \delta}
 (t)
  \|^2_{H^1(\mathbb{R}^2) }
  +\int_{0}^{T}\|  \mathbf{u}^{ \varepsilon, \delta} (t)
  \|^2_{H^2(\mathbb{R}^2)}dt
  +\|
   \mathbf{u}^{ \varepsilon, \delta}
  \|_{W^{\alpha, q}
  	(0,T; H^{-1} (\mathbb{R}^2 ))}
> R(\epsilon') \right)  <
  {\frac 12} \epsilon'.
\end{align}

On the   other hand, by Lemma 
\ref{tail 6} we infer that for
every $\epsilon'>0$, $T>0$
and $j\in \mathbb{N}$,  
there 
  exists   $N_j
  =N_j(T,  \mathbf{u}_0, \epsilon', j) \in \mathbb{N}$  
  such that
  for all  $\varepsilon\in [0,1]$, $\delta\in (0,1]$,
\begin{align}\label{tight1 p2}
\mathrm{E}\left(
\sup_{t\in [0,T]}
\int_{|x|>{\frac 12} N_j}
 \left |
   \mathbf{u}^{ \varepsilon, \delta}
  (t) (x)
 \right |^2dx\right)\leq \frac{\epsilon'}{2^{2j+1}}.
\end{align}
By \eqref{tight1 p2} and Chebyshev's inequality we see
 for all $\varepsilon\in [0,1]$, $\delta\in (0,1]$,
\begin{align*}
&\mathrm{P}
\left(\sup_{t\in [0,T]} \int_{|x|> {\frac 12}
N_j}
 \left |
  \mathbf{u}^{ \varepsilon, \delta}
(t) (x)
 \right |^2dx
  > \frac{1}{2^j}\right)
  \leq \frac{\epsilon'}{2^{j+1}},
\end{align*}
and hence   for all  $\varepsilon\in [0,1]$, $\delta\in (0,1]$,
$$
\mathrm{P}\left(\bigcup_{j=1}^\infty
\left\{\sup_{t\in [0,T]}
\int_{|x|> {\frac 12}
 N_j}
 \left |
  \mathbf{u}^{ \varepsilon, \delta}
(t) (x)
 \right |^2dx
 > \frac{1}{2^j}
  \right\} \right )
 \leq \sum_{j=1}^\infty\frac{\epsilon'}{2^{j+1}}
= \frac{\epsilon'}{2},
$$
from 
which  we obtain  that
 for all   $\varepsilon\in [0,1]$ 
 and $\delta\in (0,1]$,
 \begin{align}\label{tight1 p4}
 \mathrm{P}
 \left(
 \left\{\sup_{t\in [0,T]}  \|\phi_{N_j}
  \mathbf{u}^{ \varepsilon, \delta} (t)
  \|_{L^2(\mathbb{R} ^2) }
  ^2
  \leq \frac{1}{2^j}, ~{\rm for~ all}~ j\in \mathbb{N}  \right\}\right)\geq 1-\frac{\epsilon'}{2},
\end{align}
where $\phi =1-\theta$ and $\theta$ is the
smooth function given by 
\eqref{cutoff}.

Define the sets:
\begin{align}
&\mathcal{S}_1^{\epsilon'}=\Bigg\{ \mathbf{v} : \sup_{t\in [0,T]}\|
 \mathbf{v} (t)
  \|^2_{H^1(\mathbb{R}^2) }
  +\int_{0}^{T}\| \mathbf{v} (t)
  \|^2_{H^2(\mathbb{R}^2)}dt
  +\|\mathbf{v} \|_{W^{\alpha, q}
  	(0,T; H^{-1} (\mathbb{R}^2 ))}
  \leq R(\epsilon')\Bigg\},\label{tight1 p5}\\
&\mathcal{S}_2^{\epsilon'}=\left\{
\mathbf{v} : \sup_{t\in [0,T]} \|\phi_{N_j}
 \mathbf{v} (t)\|_{L^2
(\mathbb{R}^2)
}^2\leq \frac{1}{2^j}, ~{\rm for~ all}~ j\in \mathbb{N}
 \right\}.\label{tight1 p6}
\end{align}
Let
$\mathcal{S}^{\epsilon'}=\mathcal{S}_1^{\epsilon'}\bigcap \mathcal{S}_2^{\epsilon'}.$
From \eqref{tight1 p1} and \eqref{tight1 p4}-\eqref{tight1 p6}, we see that for all  
$\varepsilon\in [0,1]$ and  $\delta\in (0,1]$,
$$
\mathrm{P}(  \mathbf{u}^{ \varepsilon, \delta}
 \in \mathcal{S}^{\epsilon'})> 1-\epsilon'.
 $$
 
 Next, we show 
  the set $\mathcal{S}^{\epsilon'}$ is precompact in
$C([0,T];L^2(\mathbb{R}^2))
\cap L^2(0,T;H^1 (\mathbb{R}^2))$,
for which  we only need to show
$\mathcal{S}^{\epsilon'}$
is sequentially precompact.

We first prove   that
 the set  $\mathcal{S}^{\epsilon'}$ 
 is precompact in
  $C([0,T];H^{-1}(\mathbb{R}^2) )$ by the
   Arzela-Ascoli theorem,  for which
    we need to
 verify  that
  $\mathcal{S}^{\epsilon'}$ is
  equicontinuous  in $H^{-1}(\mathbb{R}^2)$
  and 
 the set $
 \{\mathbf{v}(t):   \mathbf{v}(t)\in
 \mathcal{S}^{\epsilon'}
 \} $
  is precompact in $H^{-1}(\mathbb{R}^2)$
  for every $t\in [0,T]$.
  The  equicontinuity
  of
  $\mathcal{S}^{\epsilon'}$
   in $H^{-1}(\mathbb{R}^2)$
  follows from the embedding
  $ W^{\alpha, q}
  	(0,T; H^{-1} (\mathbb{R}^2 ))
\hookrightarrow
 C^\gamma([0,T];H^{-1} (\mathbb{R}^2 ))$ with $\gamma= \alpha-\frac{1}{q}>0$.

 On the  other hand, 
 by \eqref{tight1 p6} we see
that
for every  $\beta>0$, there exists $j_0
\in \mathbb{N}$ such that for all
$\mathbf{v} \in \mathcal{S}^{\epsilon'}$,
\begin{align}\label{tight1 p7}
\sup_{t\in [0,T]}
\|\phi_{N_{j_0}} \mathbf{v}  (t)\|_{L^2
(\mathbb{R}^2) }^2<  \frac{\beta^2}{4}.
\end{align}
Note that 
the set   $\mathcal{S}^
{\epsilon'}$ 
is bounded in $H^1
(\mathbb{R}^2)$.
 Since the embedding $H^1(|x|<\frac{3N_{j_0}}{4})$ into 
  $L^2 (|x|<\frac{3N_{j_0}}{4} ) $  is compact,
we find that
 the set $\{ (1-\phi_{N_{j_0}})
 \mathbf{v} (t)
 : \mathbf{v} (t)   \in \mathcal{S}^
 {\epsilon'} \}$ is precompact in
  $L^2 (|x|<\frac{3N_{j_0}}{4} ) $,
  and
 hence
  it has a finite open cover of
  radius
  ${\frac 14} \beta $ in 
   $L^2 (|x|<\frac{3N_{j_0}}{4}) $,
 which  together with \eqref{tight1 p7} implies that
 the set $\{ \mathbf{v}  (t)
 :   \mathbf{v}   \in \mathcal{S}^{\epsilon'}\}$
 has
  a finite open cover of
  radius
  $  \beta $ in   $L^2 (\R^2 ) $.
  Therefore, 
   the set is precompact in
    $L^2(\mathbb{R}^2)$
    and so is in
    $H^{-1}(\mathbb{R}^2)$.
   By the
   Arzela-Ascoli theorem, we 
   conclude that 
   the set
    $\mathcal{S}^
{\epsilon'}$ 
   is precompact in
   $ C([0,T]; H^{-1}(\mathbb{R}^2))$.

   Let
   $ \{\mathbf{v}_n \}_{n=1}^\infty  $
   be a sequence
   in   $\mathcal{S}^
{\epsilon'}$. 
By the precompactness of 
  $\mathcal{S}^
{\epsilon'}$   in
   $ C([0,T]; H^{-1}(\mathbb{R}^2))$,
     there exist
a subsequence of 
$   \{\mathbf{v}_n \}_{n=1}^\infty  $
(which is not relabeled)
and  $ \mathbf{v}\in  
   C([0,T]; H^{-1}(\mathbb{R}^2))$
   such that
\be\label{tight1 p7a}
 \mathbf{v}_n
    \to
    \mathbf{v}
    \ \text{in } \ 
  C([0,T]; H^{-1}(\mathbb{R}^2)).
  \ee
  Since 
  $ \{\mathbf{v}_n \}_{n=1}^\infty  $
  belongs to   $\mathcal{S}^
{\epsilon'}$,  we infer that
$\mathbf{v}\in
L^\infty(0,T; H^1(\R^2))
\cap  L^2(0,T; H^2(\R^2))$,
which along with \eqref{tight1 p7a}
further implies
$\mathbf{v}\in
C([0,T];  H_w^1(\R^2))$.
  
  We now prove
  the convergence \eqref{tight1 p7a}
  actually holds in 
$ 
  C([0,T]; L^2 (\mathbb{R}^2))$.
  Indeed, 
  since $ \mathbf{v}_n  
   \in \mathcal{S}^
{\epsilon'}$, 
  by \eqref{tight1 p7a} we have
  $$
 \sup_{t\in [0,T]}\|  \mathbf{v}_n
 (t)
  -
    \mathbf{v} (t)\|^2_{
      L^2 (\mathbb{R}^2)
    }
     \le  \sup_{t\in [0,T]}
   \|\mathbf{v}_n (t)
  -
    \mathbf{v} (t) \|_{  H^1 (\mathbb{R}^2)}
    \| 
    \mathbf{v}_n
  -
    \mathbf{v}\|_{ C([0,T]; H^{-1} (\mathbb{R}^2))}
  $$
$$
    \le  \left (
    \sqrt{ R  (\epsilon')}
    + 
    \sup_{t\in [0,T]}\| 
    \mathbf{v} (t) \|_{  H^1 (\mathbb{R}^2)}
    \right )
   \|  \mathbf{v}_n
  -
    \mathbf{v}\|_{ C([0,T]; H^{-1} (\mathbb{R}^2))}
    \to 0,
    $$
    and hence 
    $\mathbf{v}\in C([0,T]; L^2(\R^2))$
    and
    \be\label{tight1 p7b}
  \mathbf{v}_n
 \to 
  \mathbf{v}
  \ \ \text{in } C([0,T]; L^2(\R^2)).
  \ee
     
On the other hand,
 since $ \mathbf{v}_n  
   \in \mathcal{S}^
{\epsilon'}$,   by
\eqref{tight1 p7b}  we obtain
  $$
     \|  \mathbf{v}_n
-
    \mathbf{v}\|^2_{L^2(0,T; H^1(\R^2))}
  \le
     \|  \mathbf{v}_n
-
    \mathbf{v}\| _{C([0,T]; L^2(\R^2))}
    \|  \mathbf{v}_n 
-
    \mathbf{v} \| _{L^1(0,T; H^2(\R^2))}
    $$
   $$
     \le
     T^{\frac 12}  \|  \mathbf{v}_n
-
    \mathbf{v}\| _{C([0,T]; L^2(\R^2))}
    \|  \mathbf{v}_n 
-
    \mathbf{v} \| _{L^2(0,T; H^2(\R^2))}
$$
    \be\label{tight1 p7c}
     \le
     T^{\frac 12}
     \left(\sqrt{R(\epsilon')}
     + \|\mathbf{v} \| _{L^2(0,T; H^2(\R^2))}
     \right)
       \|  \mathbf{v}_n
-
    \mathbf{v}\| _{C([0,T]; L^2(\R^2))}
 \to 0.
\ee
 By 
     \eqref{tight1 p7b}-\eqref{tight1 p7c}
  we see  that the set
     $\mathcal{S}^
{\epsilon'}$ is precompact
  in
$C([0,T];L^2(\mathbb{R}^2))
 \cap L^2(0,T;H^1 (\mathbb{R}^2))$,
which completes the proof.
\end{proof}

\begin{lemma}
\label{sj1*}
If  \eqref{1.1} holds,
$\mathbf{u}_0 \in H^1(\R^2)$
and
 $\varepsilon\in [0,1]$,
 then for every sequence 
  $\delta_n \to 0$,
  there exist a subsequence
  (still denoted by $\{\delta_n\}_{n=1}^\infty$), 
  a   stochastic basis
 $(\widetilde{\Omega}, \widetilde{\mathcal{F}},
  \{\widetilde{\mathcal{F}}_t\}
  _{t\ge 0},  \widetilde{\mathrm{P}})$,
 random variables
 $(\widetilde{\mathbf{u}}^{\varepsilon}, \widetilde{W}
 ^\varepsilon )$
 and
  $(\widetilde{\mathbf{u}}^{ \varepsilon, n}, \widetilde{W}^{\varepsilon, n})$
   defined on $(\widetilde{\Omega}, \widetilde{\mathcal{F}},
    \{\widetilde{\mathcal{F}}_t\}
  _{t\ge 0},  \widetilde{\mathrm{P}})$ such that
   in  the space $(C([0,T];L^2
   (\mathbb{R}^2)
   ) 
   \cap L^2(0,T;H^1(\mathbb{R}^2)))
    \times C([0,T];\mathbb{R}^\infty)$:

{\rm (i)}. The law  of $(\widetilde{\mathbf{u}}
^{ \varepsilon, n}, \widetilde{W}^ {\varepsilon, n}
)$ is
the same as    $( \mathbf{u}^{
 \varepsilon, \delta_
n}, W)$ where 
$ \mathbf{u}^{ \varepsilon, \delta_
n}$ is the solution of \eqref{1.3*}.

{\rm (ii)}.   $(\widetilde{\mathbf{u}}^{ \varepsilon,
n }, \widetilde{W}^{ \varepsilon,  
n})\rightarrow (\widetilde{\mathbf{u}}^{\varepsilon}, \widetilde{W}^\varepsilon )$,
 $\widetilde{\mathrm{P}}$-almost surely, as $n\rightarrow\infty$.

{\rm (iii)}.  The random variables
$ \widetilde{\mathbf{u}}^{\varepsilon}$ and
 $\widetilde{\mathbf{u}}^{\varepsilon, n}$ share the same
 uniform estimates  with
 $ \mathbf{u}^{\varepsilon, \delta_n}$.

     In particular,
for all  $n\in \mathbb{N}$, $\varepsilon\in [0,1]$ 
 and $p\ge 1$,
\begin{align}\label{sj11}
&\widetilde{\mathrm{E}}\left(\sup_{t\in[0,T]}\|\widetilde{\mathbf{u}}^{\varepsilon, n}
(t)\|^{2p}_{H^1
(\mathbb{R}^2)
}\right)
+\widetilde{\mathrm{E}}\left(\int_{0}^{T}
(
 \| \widetilde{\mathbf{u}}^{\varepsilon, n} (t)\|^2_{H^2
(\mathbb{R}^2)
}
+
\delta_n \| \widetilde{\mathbf{u}}^{\varepsilon, n} (t)\|^2_{H^3
(\mathbb{R}^2)
}
) 
dt \right )^p
\leq M_1,
\end{align}
and
\begin{align}\label{sj12}
&\widetilde{\mathrm{E}}\left(\sup_{t\in[0,T]}\|\widetilde{\mathbf{u}}^{\varepsilon}
(t)\|^{2p}_{H^1
(\mathbb{R}^2)
}\right)
+\widetilde{\mathrm{E}}\left(\int_{0}^{T}
 \| \widetilde{\mathbf{u}}^{\varepsilon} (t)\|^2_{H^2
(\mathbb{R}^2)
}
  dt \right )^p
\leq M_1,
 \end{align}
where $M_1>0$ is the same
constant as   in
Proposition \ref{pro2}.

{\rm (iv)}.
$\widetilde{\mathbf{u}}^{\varepsilon, n}
\rightarrow \widetilde{\mathbf{u}}^\varepsilon $ in $L^2(\widetilde{\Omega}; C([0,T]; L^{2}(\mathbb{R}^2))\cap L^2(0,T;H^1
 (\mathbb{R}^2)
))$,  as $n\rightarrow\infty$.

{\rm (v)}.
$
(\widetilde{\Omega}, \widetilde{\mathcal{F}},
  \{\widetilde{\mathcal{F}}_t\}
  _{t\ge 0},  \widetilde{\mathrm{P}},
  \widetilde{\mathbf{u}}^{\varepsilon },
   \widetilde{W}^{\varepsilon }
  )$ is a martingale solution
  of \eqref{1.3}.

 \end{lemma}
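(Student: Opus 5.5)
The plan is to combine Lemma \ref{tight1} with the Skorokhod–Jakubowski representation theorem, and then identify the limit by passing to the limit in the weak formulation. By Lemma \ref{tight1}, the laws of $(\mathbf{u}^{\varepsilon,\delta_n},W)$ are tight on $\mathcal{X}:=(C([0,T];L^2(\mathbb{R}^2))\cap L^2(0,T;H^1(\mathbb{R}^2)))\times C([0,T];\mathbb{R}^\infty)$. This space is Polish, being a closed subset of a product of separable metric spaces. Prokhorov and Skorokhod therefore yield a subsequence, a probability space $(\widetilde\Omega,\widetilde{\mathcal F},\widetilde{\mathrm P})$ and random variables with the laws in (i) that converge $\widetilde{\mathrm P}$-a.s. in $\mathcal{X}$, which is (ii). We take $\widetilde{\mathcal F}_t$ to be the completed filtration generated by $\widetilde{\mathbf u}^\varepsilon(s),\widetilde W^\varepsilon(s)$, $s\le t$. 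A standard argument with the independence of increments, which passes to the limit, shows that $\widetilde W^{\varepsilon,n}$ and $\widetilde W^\varepsilon$ are cylindrical Wiener processes with respect to the corresponding filtrations.

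For (iii), the functionals $\sup_t\|\cdot\|_{H^1}^{2p}$ and $(\int_0^T\|\cdot\|_{H^2}^2+\delta_n\|\cdot\|_{H^3}^2)^p$ are lower semicontinuous (Borel, possibly $+\infty$) on $C([0,T];L^2)\cap L^2(0,T;H^1)$. Hence equality of laws transfers \eqref{pro2 1}–\eqref{pro2 2} to $\widetilde{\mathbf u}^{\varepsilon,n}$, giving \eqref{sj11}. The a.s. convergence, together with lower semicontinuity and Fatou's lemma, then gives \eqref{sj12} with the same $M_1$. For (iv), we use Vitali's theorem. The quantity $\|\widetilde{\mathbf u}^{\varepsilon,n}-\widetilde{\mathbf u}^\varepsilon\|^2_{C([0,T];L^2)\cap L^2(0,T;H^1)}$ tends to $0$ a.s. It is uniformly integrable because its $p$-th moment, with $p=2$, is bounded by \eqref{sj11}–\eqref{sj12}.

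For (v), we first show that $\widetilde{\mathbf u}^{\varepsilon,n}$ satisfies \eqref{1.3*} on the new space with $\widetilde W^{\varepsilon,n}$. Following the usual argument, the process $\widetilde{\mathbf u}^{\varepsilon,n}(t)-\mathbf u_0-\int_0^t(\cdots)ds$ is a martingale with the correct quadratic variation, since this is a property of the joint law. We then pass to the limit term by term in the weak form tested against $\varphi\in C_0^\infty(\mathbb{R}^2)$:
\begin{itemize}
\item the linear terms converge by (iv);
\item the viscous term satisfies $\delta_n|\int_0^t(\Delta\widetilde{\mathbf u}^{\varepsilon,n},\Delta\varphi)ds|\le \delta_n c\|\widetilde{\mathbf u}^{\varepsilon,n}\|_{L^1(0,T;L^2)}\to0$ in $L^1(\widetilde\Omega)$;
\item the cubic term converges using the $L^\infty_tH^1$ bounds, the embedding $H^1\hookrightarrow L^4$, and $C([0,T];L^2)$ convergence;
\item the noise terms are handled by the convergence lemma for stochastic integrals (Debussche–Glatt-Holtz–Temam / Bensoussan), since $\widetilde{\mathbf u}^{\varepsilon,n}\times\mathbf f_k+\mathbf f_k\to\widetilde{\mathbf u}^\varepsilon\times\mathbf f_k+\mathbf f_k$ in probability in $L^2(0,T;L_2(\ell^2,L^2))$ by \eqref{1.1}, and $\widetilde W^{\varepsilon,n}\to\widetilde W^\varepsilon$ in probability.
\end{itemize}

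The main obstacle is the nonlinear term $\mathbf u\times\Delta\mathbf u$. We write
\[
(\mathbf u\times\Delta\mathbf u,\varphi)=-\sum_i(\mathbf u\times\partial_i\mathbf u,\partial_i\varphi)-\sum_i(\partial_i\mathbf u\times\partial_i\mathbf u,\varphi)
=-\sum_i(\mathbf u\times\partial_i\mathbf u,\partial_i\varphi),
\]
where the second sum vanishes because $\mathbf a\times\mathbf a=0$. The difference at levels $n$ and $\infty$ is then bounded by
\[
\|\widetilde{\mathbf u}^{\varepsilon,n}-\widetilde{\mathbf u}^\varepsilon\|_{L^2(0,T;L^4)}\|\nabla\widetilde{\mathbf u}^{\varepsilon,n}\|_{L^2(0,T;L^2)}\|\nabla\varphi\|_{L^\infty}+\|\widetilde{\mathbf u}^\varepsilon\|_{L^\infty(0,T;L^4)}\|\nabla(\widetilde{\mathbf u}^{\varepsilon,n}-\widetilde{\mathbf u}^\varepsilon)\|_{L^2(0,T;L^2)}\|\nabla\varphi\|_{L^\infty}.
\]
Both pieces tend to $0$ a.s. by the $L^2(0,T;H^1)$ convergence and the Gagliardo–Nirenberg inequality. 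Their moments are bounded by \eqref{sj11}, so Vitali applies again. The identity for $\widetilde{\mathbf u}^\varepsilon$ then holds for test functions in a dense set, hence in $H^{-1}$, for all $t$. Together with \eqref{sj12} and weak continuity in $H^1$, this shows that the tuple is a martingale solution of \eqref{1.3}.
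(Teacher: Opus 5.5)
Your proposal is correct and follows the paper's route: Lemma \ref{tight1} with Skorokhod for (i)--(ii), equality of laws plus lower semicontinuity and Fatou for (iii), Vitali for (iv), and term-by-term passage to the limit in the weak formulation for (v), where you actually supply the details (integration by parts in $\mathbf u\times\Delta\mathbf u$, convergence of the stochastic integrals) that the paper defers to \cite{Q1}. The differences are cosmetic. You dispose of the viscous term by putting all derivatives on $\varphi$, whereas the paper uses the $\delta_n\|\cdot\|^2_{H^3}$ bound to get an $O(\delta_n^{1/2})$ rate; your bound there should read $\delta_n\|\Delta\varphi\|_{L^2}\|\Delta\widetilde{\mathbf u}^{\varepsilon,n}\|_{L^1(0,T;L^2)}$, or you can write the term as $(\widetilde{\mathbf u}^{\varepsilon,n},\Delta^2\varphi)$. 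Also, your $L^4\times L^2\times L^\infty$ H\"older pairing in the cross-product estimate only holds because $\varphi$ has compact support, so you should either say so or simply use the $L^2$ norms of $\widetilde{\mathbf u}^{\varepsilon,n}-\widetilde{\mathbf u}^\varepsilon$ and $\widetilde{\mathbf u}^\varepsilon$ instead.
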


 \begin{proof} 
 (i)-(ii). 
 Let $\{\delta_n\}_{n=1}^\infty$
 be an arbitrary sequence  such that
 $\delta_n \to 0$.
 Then by 
   Lemma \ref{tight1} and  Skorokhod's representation theorem,
   there exists a subsequence of 
   $\{\delta_n\}_{n=1}^\infty$ (not relabeled)
   such that    (i)-(ii) hold.
   
   (iii). 
   By (i) and Proposition \ref{pro2} we obtain
   \eqref{sj11},
   which together with  (ii) yields
   \eqref{sj12}.
   
   (iv). 
   By (ii) we have
   $\widetilde{\mathbf{u}}^{\varepsilon, n}
\rightarrow \widetilde{\mathbf{u}}^\varepsilon $
 in  $C([0,T]; L^2
 (\mathbb{R}^2))$,
  $\widetilde{\mathrm{P}}$-almost surely,
  which along with
  \eqref{sj11}  
   and Vitali's theorem 
   implies that
 \be\label{2.44}
\widetilde{\mathbf{u}}^{\varepsilon, n}
\rightarrow \widetilde{\mathbf{u}}^\varepsilon 
 ~{\rm in} ~L^2(\widetilde{\Omega}; C([0,T]; L^2(\mathbb{R}^2))).
 \ee

 Similarly,
  by (ii),
  \eqref{sj11}  
   and Vitali's theorem 
  we get 
$$
\widetilde{\mathbf{u}}^{\varepsilon, n}
\rightarrow \widetilde{\mathbf{u}}^\varepsilon 
 ~{\rm in} ~L^2(\widetilde{\Omega}; 
 L^2(0,T;  H^1 (\mathbb{R}^2))),
 $$
 which along with \eqref{2.44} 
 shows that (iv) is valid.
 
 (v).  
 By  (i), we find that
     $(\widetilde{\mathbf{u}}^{\varepsilon, n}, \widetilde{W}^{\varepsilon, n})$ satisfies \eqref{1.3*}:
\begin{align}\label{1.3**}
&d \widetilde{\mathbf{u}}^{\varepsilon, n}
=\Delta 
 \widetilde{\mathbf{u}}^{\varepsilon, n} dt
 -\delta_n
  \Delta^2
   \widetilde{\mathbf{u}}^{\varepsilon, n} dt
+  \widetilde{\mathbf{u}}^{\varepsilon, n}
\times \Delta 
 \widetilde{\mathbf{u}}^{\varepsilon, n} dt
-(1+|
\widetilde{\mathbf{u}}^{\varepsilon, n}
|^{2})
\widetilde{\mathbf{u}}^{\varepsilon, n}
 dt\nonumber\\
&  {+\frac {\varepsilon^2}{2}
   \sum_{k=1}^\infty
    (\widetilde{\mathbf{u}}^{\varepsilon, n}
      \times \mathbf{f}_k)
    \times \mathbf{f}_k
    dt
+\varepsilon\sum_{k=1}^\infty(
 \widetilde{\mathbf{u}}^{\varepsilon, n}  \times \mathbf{f}_k+\mathbf{f}_k) d \widetilde{W}^
 {\varepsilon, n}
 _k,} \ \ \text{in} \       H^{-1}(\R^2).
\end{align}

We now show that $(\widetilde
{\mathbf{u}}
^\varepsilon, \widetilde{W}^\varepsilon)$ 
is a martingale solution
of  \eqref{1.3}. 
To that end, we take arbitrary 
 $\varphi\in C_0^\infty(\mathbb{R}^2)$
 and
 $\psi\in L^\infty
 (\Omega \times [0,T])$.
 Then by \eqref{1.3**} we have
 $$
 \widetilde{\mathrm{E}}\int_{0}^{T}\psi(t)
  ( \widetilde{\mathbf{u}}^{\varepsilon, n}
    (t), \varphi )_{L^2
   ( \mathbb{R}^2) } dt
    =  \widetilde{\mathrm{E}}\int_{0}^{T}\psi(t)
    (  
    \widetilde{\mathbf{u}}^{\varepsilon, n}
    (0)    , \varphi )_{L^2
    	( \mathbb{R}^2) }dt
    	$$
    	$$
    +\widetilde{\mathrm{E}}\int_{0}^{T}\psi(t)
    \int_0^t
    (\Delta 
    \widetilde{\mathbf{u}}^{\varepsilon, n}
    (s), \varphi)_{L^2
    	( \mathbb{R}^2) } ds dt
    	+
    	\delta_n \widetilde{\mathrm{E}}\int_{0}^{T}\psi(t)  
    	 \int_0^t
  (\nabla  \Delta 
  \widetilde{\mathbf{u}}^{\varepsilon, n}
  (s),  \nabla \varphi)_{L^2(\R^2) } dsdt
    $$
    $$
    +\widetilde{\mathrm{E}}\int_{0}^{T}\psi(t)
    \int_0^t
    (\widetilde{\mathbf{u}}^{\varepsilon, n}
    (s) \times \Delta  \widetilde{\mathbf{u}}^{\varepsilon, n}
    (s), \varphi )
    _{L^2
    	( \mathbb{R}^2) }
    ds dt
    -\widetilde{\mathrm{E}}\int_{0}^{T}\psi(t)
    \int_0^t
    \left (
    (1+| \widetilde{\mathbf{u}}^{\varepsilon, n} (s) |^{2}) 
    \widetilde{\mathbf{u}}^{\varepsilon, n}
    (s),
    \varphi \right )_{L^2
    	( \mathbb{R}^2) } dsdt
    $$
$$
    +
    \frac{\varepsilon^2}{2}
    \widetilde{\mathrm{E}}\int_{0}^{T}\psi(t)
   \sum_{k=1}^\infty
    \int_0^t
    \left (
    ( \widetilde{\mathbf{u}}^{\varepsilon, n}
    (s)  \times \mathbf{f}_k)
    \times \mathbf{f}_k,
    \varphi
    \right )_{L^2
    	( \mathbb{R}^2 ) } dsdt
    $$
        \be\label{ps 4**} +
    \varepsilon
    \widetilde{\mathrm{E}}\int_{0}^{T}\psi(t)
    \sum_{k=1}^\infty
    \int_0^t
    ( \widetilde{\mathbf{u}}^{\varepsilon, n}
    (s)  \times \mathbf{f}_k
    +\mathbf{f}_k, \varphi )
    _{L^2
    	( \mathbb{R}^2) }
    d \widetilde{W}^{\varepsilon, n}_k dt.
    \ee
    For the viscous term,
    by \eqref{sj11} we have
 $$
 	\delta_n \widetilde{\mathrm{E}}\int_{0}^{T}\psi(t)  
    	 \int_0^t
  (\nabla  \Delta 
  \widetilde{\mathbf{u}}^{\varepsilon, n}
  (s),  \nabla \varphi)_{L^2(\R^2) } dsdt
  $$
  $$
\leq \delta_n  T\|\psi\|_{L^\infty
 (\Omega \times [0,T])}\|\varphi\|_{H^1(\mathbb{R}^2)}
\widetilde{\mathrm{E}}\int_{0}^{T}\left\| 
 \nabla \Delta
  \widetilde{\mathbf{u}}^{\varepsilon, n}
(t)\right\|_{L^2(\mathbb{R}^2)} dt$$
$$
\leq \delta_n^\frac{1}{2}T^\frac{3}{2}\|\psi\|_{L^\infty
 (\Omega \times [0,T])}\|\varphi\|_{H^1(\mathbb{R}^2)}
\widetilde{\mathrm{E}}\left(\int_{0}^{T}\delta_n
\left\|\nabla \Delta
 \widetilde{\mathbf{u}}^{\varepsilon, n}
(t)\right\|^2_{L^2(\mathbb{R}^2)} dt\right)^\frac{1}{2}
$$
$$
\leq c_1 \delta_n^\frac{1}{2}T^\frac{3}{2}\|\psi\|_{L^\infty
 (\Omega \times [0,T])}\|\varphi\|_{H^1(\mathbb{R}^2)},$$
where $c_1>0$ is
a constant  independent of $n$ and $ \varepsilon$, 
from which  we get
\be\label{ps 5*}
 \lim_{n\to \infty}
 	\delta_n \widetilde{\mathrm{E}}\int_{0}^{T}\psi(t)  
    	 \int_0^t
  (\nabla  \Delta 
  \widetilde{\mathbf{u}}^{\varepsilon, n}
  (s),  \nabla \varphi)_{L^2(\R^2) } dsdt
  =0.
\ee
  
We can  also  identify the limit of
all other  terms
in    \eqref{ps 4**} as
in \cite{Q1}, and
then by \eqref{ps 4**}-\eqref{ps 5*}
we obtain that for any
$\varphi\in C_0^\infty(\mathbb{R}^2)$
 and
 $\psi\in L^\infty
 (\Omega \times [0,T])$, 
 $$
 \widetilde{\mathrm{E}}\int_{0}^{T}\psi(t)
  ( \widetilde{\mathbf{u}}^{\varepsilon}
    (t), \varphi )_{L^2
   ( \mathbb{R}^2) } dt
    =  \widetilde{\mathrm{E}}\int_{0}^{T}\psi(t)
    (  
    \widetilde{\mathbf{u}}^{\varepsilon}
    (0)    , \varphi )_{L^2
    	( \mathbb{R}^2) }dt
    +\widetilde{\mathrm{E}}\int_{0}^{T}\psi(t)
    \int_0^t
    (\Delta 
    \widetilde{\mathbf{u}}^{\varepsilon }
    (s), \varphi)_{L^2
    	( \mathbb{R}^2) } ds dt
      $$
    $$
    +\widetilde{\mathrm{E}}\int_{0}^{T}\psi(t)
    \int_0^t
    (\widetilde{\mathbf{u}}^{\varepsilon }
    (s) \times \Delta  \widetilde{\mathbf{u}}^{\varepsilon }
    (s), \varphi )
    _{L^2
    	( \mathbb{R}^2) }
    ds dt
    -\widetilde{\mathrm{E}}\int_{0}^{T}\psi(t)
    \int_0^t
    \left (
    (1+| \widetilde{\mathbf{u}}^{\varepsilon } (s) |^{2}) 
    \widetilde{\mathbf{u}}^{\varepsilon }
    (s),
    \varphi \right )_{L^2
    	( \mathbb{R}^2) } dsdt
    $$
$$
    +
    \frac{\varepsilon^2}{2}
    \widetilde{\mathrm{E}}\int_{0}^{T}\psi(t)
   \sum_{k=1}^\infty
    \int_0^t
    \left (
    ( \widetilde{\mathbf{u}}^{\varepsilon }
    (s)  \times \mathbf{f}_k)
    \times \mathbf{f}_k,
    \varphi
    \right )_{L^2
    	( \mathbb{R}^2 ) } dsdt
    $$
        \be\label{ps 5**} +
    \varepsilon
    \widetilde{\mathrm{E}}\int_{0}^{T}\psi(t)
    \sum_{k=1}^\infty
    \int_0^t
    ( \widetilde{\mathbf{u}}^{\varepsilon }
    (s)  \times \mathbf{f}_k
    +\mathbf{f}_k, \varphi )
    _{L^2
    	( \mathbb{R}^2) }
    d \widetilde{W}^{\varepsilon }_k dt.
    \ee
    
    By the density of $C^\infty_0(\R^2)$
    in $L^2(\R^2)$, we infer from \eqref{ps 5**}
    that 
 $(\widetilde{\Omega}, \widetilde{\mathcal{F}},
  \{\widetilde{\mathcal{F}}_t\}
  _{t\ge 0},  \widetilde{\mathrm{P}},
  \widetilde{\mathbf{u}}^{\varepsilon },
   \widetilde{W}^{\varepsilon }
  )$ is a martingale solution
  of \eqref{1.3},
  which completes the proof.
  \end{proof}

We are now ready to 
show the main result of this section
regarding the convergence of
solutions of \eqref{1.3*}
as $\delta \to 0$.

{\bf Proof of Proposition \ref{pro3.1}}.
 Let $\{\delta_n\}_{n=1}^\infty $
 be an
 arbitrary  sequence
 such that $\delta_n \to 0$.
 Let  $\mathbf{u}^{\varepsilon, \delta_n}$
 be the solution of \eqref{1.3*}
 with initial data
 $\mathbf{u}_0 \in H^1(\R^2)$
 and $\delta$ replaced  by $\delta_n$.
 We first prove 
 \be\label{wana1}
 \lim_{n\to \infty}
 \mathbf{u}^{\varepsilon, \delta_n} 
 =
 \mathbf{u}^{\varepsilon}  
 \ \ \text{in probability  in }
 \ 
 C([0,T];L^2
   (\mathbb{R}^2)
   ) 
   \cap L^2(0,T;H^1(\mathbb{R}^2)),
   \ee
   where
   $
   \mathbf{u}^{\varepsilon} 
   $   
   is the solution of \eqref{1.3}
   with initial data $\mathbf{u}_0$.
   To that end, by 
   Gy\"{o}ngy-Krylov's theorem
   \cite[Lemma 1.1]{gyo},
   we need to show that
   for every sequence
   $\{(\mathbf{u}^{\varepsilon, \delta_{n_k}},
 \mathbf{u}^{\varepsilon, \delta_{m_k}} )
 \}_{k=1}^\infty$
 defined in the product space
 $(C([0,T];L^2
   (\mathbb{R}^2)
   ) 
   \cap L^2(0,T;H^1(\mathbb{R}^2)))^2$,
   there exists a subsequence
   converging weakly to a random
   variable supported on the diagonal
   of 
   $(C([0,T];L^2
   (\mathbb{R}^2)
   ) 
   \cap L^2(0,T;H^1(\mathbb{R}^2)))^2$.

   By the argument of Lemma
   \ref{tight1}, we find that
  the laws of the
   sequence
    $\{(\mathbf{u}^{\varepsilon, \delta_{n_k}},
 \mathbf{u}^{\varepsilon, \delta_{m_k}} )
 \}_{k=1}^\infty$
are tight in  $(C([0,T];L^2
   (\mathbb{R}^2)
   ) 
   \cap L^2(0,T;H^1(\mathbb{R}^2)))^2$, 
   and hence
   there exist a probability measure
   $\mu$ on 
 $(C([0,T];L^2
   (\mathbb{R}^2)
   ) 
   \cap L^2(0,T;H^1(\mathbb{R}^2)))^2$,
   and a subsequence (not relabeled)
   such that
the laws  of     
    $\{(\mathbf{u}^{\varepsilon, \delta_{n_k}},
 \mathbf{u}^{\varepsilon, \delta_{m_k}} )
 \}_{k=1}^\infty$
 weakly converge to $\mu$.
 It remains to show
 $\mu$ is 
 supported on the diagonal
   of 
   $(C([0,T];L^2
   (\mathbb{R}^2)
   ) 
   \cap L^2(0,T;H^1(\mathbb{R}^2)))^2$.
   
   Note that 
   the laws of the sequence
    $\{((\mathbf{u}^{\varepsilon, \delta_{n_k}},
 \mathbf{u}^{\varepsilon, \delta_{m_k}} ),
 W)
 \}_{k=1}^\infty$
 are tight in 
  $(C([0,T];L^2
   (\mathbb{R}^2)
   ) 
   \cap L^2(0,T;H^1(\mathbb{R}^2)))^2
    \times C([0,T];\mathbb{R}^\infty)$ 
    and hence it has a weakly
    convergent subsequence
    (which is not relabeled again).
Then by Skorokhod's representation theorem,
there exist  a
stochastic   basis 
 $(\widetilde{\Omega},\widetilde{\mathcal{F}}, 
 \{\widetilde{\mathcal{F}}_t\}_{t\ge 0},  \widetilde{\mathrm{P}})$,
  random variables $\left(
 \widetilde{\mathbf{u}}^{\varepsilon, k}, 
 \widetilde{\mathbf{v}}^{\varepsilon, k},  
 \widetilde{W}^k \right)$,
 $\left(
 \widetilde{\mathbf{u}}^{\varepsilon}, 
 \widetilde{\mathbf{v}}^{\varepsilon},  
 \widetilde{W} \right)$ 
 such that
 $\left(
 \widetilde{\mathbf{u}}^{\varepsilon, k}, 
 \widetilde{\mathbf{v}}^{\varepsilon, k},  
 \widetilde{W}^k \right)$ 
  converges to
  $\left(
 \widetilde{\mathbf{u}}^{\varepsilon}, 
 \widetilde{\mathbf{v}}^{\varepsilon},  
 \widetilde{W} \right)$,  
 $\widetilde{\mathrm{P}}$-almost surely,
in  
$(C([0,T];L^2
   (\mathbb{R}^2)
   ) 
   \cap L^2(0,T;H^1(\mathbb{R}^2)))^2
    \times C([0,T];\mathbb{R}^\infty)$.
    In addition,
    the law of
     $\left(
 \widetilde{\mathbf{u}}^{\varepsilon, k}, 
 \widetilde{\mathbf{v}}^{\varepsilon, k},  
 \widetilde{W}^k \right)$     is the same as
    $((\mathbf{u}^{\varepsilon, \delta_{n_k}},
 \mathbf{u}^{\varepsilon, \delta_{m_k}} ),
 W)   $
 for every $k\in \mathbb{N}$. 
 Consequently,
 $\mu$  must be 
 the law of 
  $\left(
 \widetilde{\mathbf{u}}^{\varepsilon}, 
 \widetilde{\mathbf{v}}^{\varepsilon} \right)$
 in 
 $(C([0,T];L^2
   (\mathbb{R}^2)
   ) 
   \cap L^2(0,T;H^1(\mathbb{R}^2)))^2
    $.    
    
    By the argument of Lemma
    \ref{sj1*} (v), one can verify
    that both
     $\left(
 \widetilde{\mathbf{u}}^{\varepsilon},  
 \widetilde{W} \right)$
 and
       $\left(
 \widetilde{\mathbf{v}}^{\varepsilon},  
 \widetilde{W} \right)$
 are martingale solutions
 of
 \eqref{1.3}
 defined in the  same stochastic
 basis
 $(\widetilde{\Omega},\widetilde{\mathcal{F}}, 
 \{\widetilde{\mathcal{F}}_t\}_{t\ge 0},  \widetilde{\mathrm{P}})$. 
 In addition, 
 $\widetilde{\mathbf{u}}^{\varepsilon}
 (0)
 =
  \widetilde{\mathbf{v}}^{\varepsilon} 
  (0)
  =
  {\mathbf{u}}_0$,
  $\widetilde{\mathrm{P}}$-almost surely.
  Then by the pathwise uniqueness
  property of \eqref{1.3} we infer that
  $ \widetilde{\mathbf{u}}^{\varepsilon} 
  =  \widetilde{\mathbf{v}}^{\varepsilon}$,
  $\widetilde{\mathrm{P}}$-almost surely,
  in
  $C([0,T];L^2
   (\mathbb{R}^2)
   ) 
   \cap L^2(0,T;H^1(\mathbb{R}^2))
    $.     
    Since $\mu$
    is the law of  
    $(\widetilde{\mathbf{u}}^{\varepsilon} ,
    \widetilde{\mathbf{v}}^{\varepsilon}   )$,
    we find that
    $\mu$ is supported    
    on the diagonal  of 
    $(C([0,T];L^2
   (\mathbb{R}^2)
   ) 
   \cap L^2(0,T;H^1(\mathbb{R}^2)))^2
    $.    
    Then \eqref{wana1} follows from
    Gy\"{o}ngy-Krylov's theorem.
    
        By \eqref{wana1} and the
        argument of Lemma \ref{sj1*},
        one can verify 
        ${\mathbf{u}}^{\varepsilon}    $
        is the solution of
        \eqref{1.3}
        with initial data
        ${\mathbf{u}}_0$
        defined in the original
        stochastic basis
        $({\Omega}, {\mathcal{F}}, 
 \{{\mathcal{F}}_t\}_{t\ge 0},  {\mathrm{P}})$.  
 Since $\{\delta_n\}_{n=1}^\infty$
 is an arbitrary sequence
 with $\delta_n \to 0$ and the solution
 of \eqref{1.3} is unique,
 we  then  obtain
         Proposition \ref{pro3.1}. \qquad\quad $\square$

\section{Uniform tail-ends estimates}
This section is devoted to the uniform 
tail-ends  estimates of solutions of
the stochastic equation
\eqref{1.3} in $H^1(\R^2)$.
We first derive   
the uniform 
tail-ends  estimates  
of  solutions of the viscous equation
\eqref{1.3*}, and then establish
the corresponding  uniform estimates 
for  \eqref{1.3}
by a limiting process based on
 Proposition \ref{pro3.1}.

\begin{lemma}\label{ues1a}
If \eqref{1.1}  holds,
then for every
 $\mathbf{u}_0 \in H^1(\R^2)$, 
 the solution $\mathbf{u}^{\varepsilon,\delta}
(t,  \mathbf{u}_0)$ of \eqref{1.3*}
 satisfies: for all $t\ge 0$,
  $\varepsilon\in [0,1]$ and $\delta\in (0,1]$,  
 $$
\mathrm{E}
\left (
 \|
 \mathbf{u}^{\varepsilon,\delta} (t,  \mathbf{u}_0) \|^2_{H^1(\mathbb{R}^2)}
 \right )
 +
  \int_0^t e^{s-t}  \mathrm{E}
\left ( \|
 \mathbf{u}^{\varepsilon,\delta} (s,  \mathbf{u}_0) \|^2_{H^2(\mathbb{R}^2)}
 +
 \int_{\mathbb{R}^2}
 |\mathbf{u}^{\varepsilon,\delta} (s,  \mathbf{u}_0)|^2
 |\nabla \mathbf{u}^{\varepsilon,\delta} (s,  \mathbf{u}_0)|^2  dx
 \right )
 ds$$
 $$
 +\delta\int_0^t e^{s-t}  \mathrm{E}
\left ( \|
 \mathbf{u}^{\varepsilon,\delta} (s,  \mathbf{u}_0) \|^2_{H^3(\mathbb{R}^2)}\right)ds
 \le
   M_3 +M_3 e^{-t}\| \mathbf{u}_0 \|^2_{H^1(\R^2)},
 $$
 and
 $$
   \int_0^t   \mathrm{E}
\left ( \|
 \mathbf{u}^{\varepsilon,\delta} (s,  \mathbf{u}_0) \|^2_{H^2(\mathbb{R}^2)}
 + 
   \delta   
  \|
 \mathbf{u}^{\varepsilon,\delta} (s,  \mathbf{u}_0) \|^2_{H^3(\mathbb{R}^2)}\right)ds
 \le
   M_3 t +M_3 \| \mathbf{u}_0 \|^2_{H^1(\R^2)},
  $$
where $M_3>0$ is a constant  depending only
 on $\{\mathbf{f}_k\}_{k=1}^\infty$ but not on $\varepsilon$ or $ \delta$.
\end{lemma}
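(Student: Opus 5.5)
We apply It\^{o}'s formula to $\|\mathbf{u}^{\varepsilon,\delta}\|^2_{L^2}$ and to $\|\nabla\mathbf{u}^{\varepsilon,\delta}\|^2_{L^2}$ separately. This is legitimate because, by Proposition \ref{pro2}, $\mathbf{u}^{\varepsilon,\delta}\in L^2(0,T;H^3(\R^2))$ and $\mathbf{u}^{\varepsilon,\delta}\times\Delta\mathbf{u}^{\varepsilon,\delta}\in L^2(0,T;L^2(\R^2))$. Hence the drift lies in $L^2(0,T;H^{-1})$ for the $L^2$-identity and in $L^2(0,T;L^2)$ when tested against $-\Delta\mathbf{u}$, so \cite[Theorem 3.2]{kry1981} applies in the Gelfand triple $H^2\subset H^1\subset L^2$. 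We then add the two identities, multiply by $e^{t}$, and take expectations. The stochastic integrals are genuine martingales by \eqref{pro2 1}, so their expectations vanish.

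For the $L^2$ identity, the cross product term vanishes since $(\mathbf{u}\times\Delta\mathbf{u},\mathbf{u})=0$, and the It\^{o} correction $\varepsilon^2\|\mathbf{u}\times\mathbf{f}_k+\mathbf{f}_k\|^2$ combines with $\varepsilon^2((\mathbf{u}\times\mathbf{f}_k)\times\mathbf{f}_k,\mathbf{u})=-\varepsilon^2\|\mathbf{u}\times\mathbf{f}_k\|^2$. What remains is $\varepsilon^2\sum_k(\|\mathbf{f}_k\|^2_{L^2}+2(\mathbf{u}\times\mathbf{f}_k,\mathbf{f}_k))=\varepsilon^2\sum_k\|\mathbf{f}_k\|^2_{L^2}$, a constant. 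We thus get
\[
\frac{d}{dt}\mathrm{E}\|\mathbf{u}\|^2+2\mathrm{E}\|\nabla\mathbf{u}\|^2+2\delta\mathrm{E}\|\Delta\mathbf{u}\|^2+2\mathrm{E}\|\mathbf{u}\|^2+2\mathrm{E}\|\mathbf{u}\|^4_{L^4}\le c.
\]

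For the gradient identity, testing with $-\Delta\mathbf{u}$ gives the dissipation $\|\Delta\mathbf{u}\|^2+\delta\|\nabla\Delta\mathbf{u}\|^2+\|\nabla\mathbf{u}\|^2$, and the precession term vanishes since $(\mathbf{u}\times\Delta\mathbf{u},\Delta\mathbf{u})=0$. The cubic term yields $\int|\mathbf{u}|^2|\nabla\mathbf{u}|^2+2\int|\mathbf{u}\cdot\nabla\mathbf{u}|^2\ge\int|\mathbf{u}|^2|\nabla\mathbf{u}|^2$, which is the weighted term in the statement.

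The noise terms are the main obstacle. The It\^{o} correction is $\varepsilon^2\sum_k\|\nabla(\mathbf{u}\times\mathbf{f}_k+\mathbf{f}_k)\|^2$, and the Stratonovich drift contributes $-\varepsilon^2\sum_k((\mathbf{u}\times\mathbf{f}_k)\times\mathbf{f}_k,\Delta\mathbf{u})$. I will expand both with the product rule. The top-order parts $\|\nabla\mathbf{u}\times\mathbf{f}_k\|^2$ cancel exactly against the corresponding part of the drift after one integration by parts, as is standard for Stratonovich rotations. The leftover terms involve $\nabla\mathbf{f}_k$ and are bounded using \eqref{1.1} by
\[
c\sum_k\|\mathbf{f}_k\|^2_{W^{1,\infty}\cap H^1}\bigl(1+\|\mathbf{u}\|^2_{L^2}+\|\mathbf{u}\|\|\nabla\mathbf{u}\|\bigr)+\tfrac12\|\Delta\mathbf{u}\|^2.
\]
The cross term $(\nabla(\mathbf{u}\times\mathbf{f}_k),\nabla\mathbf{f}_k)$ is handled the same way. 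If the exact cancellation turns out messy, the fallback is to bound everything crudely by $c(\|\mathbf{u}\|^2_{H^1}+1)$. The $\|\nabla\mathbf{u}\|^2$ part is then absorbed by a suitable multiple of the $L^2$ identity, whose dissipation $2\|\nabla\mathbf{u}\|^2$ is $\varepsilon$-independent, while the remaining $\|\mathbf{u}\|^2_{L^2}$ contribution is controlled through the quartic term via $\|\mathbf{u}\|^2\le\eta\|\mathbf{u}\|^4_{L^4}+C_\eta$ on the support weighting. Since this last step is delicate on $\R^2$, it is the part I would check most carefully.

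Combining the two identities with a large weight $K$ on the $L^2$ identity, we obtain
\[
\frac{d}{dt}y+y+\mathrm{E}\Bigl(\|\mathbf{u}\|^2_{H^2}+\int|\mathbf{u}|^2|\nabla\mathbf{u}|^2+\delta\|\mathbf{u}\|^2_{H^3}\Bigr)\le M,
\]
where $y=\mathrm{E}(K\|\mathbf{u}\|^2+\|\nabla\mathbf{u}\|^2)$. Here $\|\mathbf{u}\|^2_{H^2}$ and $\|\mathbf{u}\|^2_{H^3}$ are equivalent to $\|\mathbf{u}\|^2+\|\Delta\mathbf{u}\|^2$ and $\|\mathbf{u}\|^2+\|\nabla\Delta\mathbf{u}\|^2$, respectively. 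Multiplying by $e^t$ and integrating over $(0,t)$ gives the first inequality. Integrating without the exponential weight, and dropping $y(t)\ge0$, gives the second.
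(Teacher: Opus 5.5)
Your strategy matches the paper's. The paper omits the proof and defers to Lemma 4.1 of \cite{Q1}, which is the same argument: It\^o's formula for $\|\mathbf{u}\|^2_{L^2}$ and $\|\nabla\mathbf{u}\|^2_{L^2}$, a weighted sum, and the factor $e^{t}$. The cancellations you list are all correct, including the exact cancellation of $\|\nabla\mathbf{u}\times\mathbf{f}_k\|^2_{L^2}$ between the It\^o correction and the Stratonovich drift.

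One step, however, fails as you state it. On $\R^2$ there is no bound $\|\mathbf{u}\|^2_{L^2}\le\eta\|\mathbf{u}\|^4_{L^4}+C_\eta$: take $\mathbf{u}=a\,\psi(\cdot/R)$ with a fixed bump $\psi$, $a$ small and $R\to\infty$. So terms such as $\varepsilon^2\|\mathbf{u}\times\nabla\mathbf{f}_k\|^2_{L^2}$, which the noise produces in the gradient identity, cannot be controlled that way. A weighted version $\int|\mathbf{u}|^2w\,dx\le\eta\|\mathbf{u}\|^4_{L^4}+C_\eta\|w\|^2_{L^2}$, with $w$ built from $\mathbf{f}_k$ and $\nabla\mathbf{f}_k$, would be legitimate, but it is not needed. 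Your own $L^2$ identity already contains the dissipation $2\mathrm{E}\|\mathbf{u}\|^2_{L^2}$ from the linear part of $-(1+|\mathbf{u}|^2)\mathbf{u}$. Multiply that identity by $K$, chosen large depending only on $\sum_k\|\mathbf{f}_k\|_{W^{1,\infty}\cap H^1}$. Then every $c\|\mathbf{u}\|^2_{L^2}$ and $c\|\nabla\mathbf{u}\|^2_{L^2}$ on the right is absorbed, and you get $\frac{d}{dt}y+y+\dots\le M$ with $M$ depending only on $\{\mathbf{f}_k\}$.

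A smaller correction concerns justifying It\^o's formula for $\|\nabla\mathbf{u}\|^2_{L^2}$. The triple $H^2\subset H^1\subset L^2$ does not fit, because $\delta\Delta^2\mathbf{u}$ lies only in $L^2(0,T;H^{-1})$, not in $L^2(0,T;L^2)$. Use $H^3\subset H^1\subset H^{-1}$ with $H^1$ as pivot instead. This is exactly what $\mathbf{u}\in L^2(0,T;H^3)$ from \eqref{pro2 2} provides, and it matches how the paper writes the pairing $(\delta\Delta^2\mathbf{u},\cdot)_{(H^{-1},H^1)}$ in \eqref{tail_2}.

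With these two fixes the rest goes through: the martingale property, the $e^{t}$ weighting, and integrating without the weight for the second estimate.
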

\begin{proof} 
The proof is quite similar to
   Lemma 4.1 in \cite{Q1},
   and   hence omitted here.
 \end{proof}

\begin{lemma}\label{tail_l2}
If \eqref{1.1} holds,
then for every   $\epsilon'>0$
  and
 $\mathbf{u}_0
 \in H^1
 (\mathbb{R}^2  )$,
 there exists
 $J=J(\epsilon',
    \mathbf{u}_0) \ge 1$
    such that for all 
    $j\ge J$, $t\ge 0$,
  $\varepsilon\in [0,1]$
  and  $\delta\in (0,1]$,
  the solution 
 ${\mathbf{ u}}^{\varepsilon,\delta} (t,
\mathbf{u}_0 )$
of \eqref{1.3*} satisfies:
$$
 \mathrm{E}
\left (
 \int_{|x|>j}
 |
 {\mathbf{ u}}^{\varepsilon,\delta} (t,
\mathbf{u}_0 )(x) |^2 dx
\right )
  <\epsilon'.
$$
\end{lemma}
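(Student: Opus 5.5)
We use the same cut-off as in Lemma \ref{tail 6}: set $\phi_j(x)=\phi(x/j)$ with $\phi=1-\theta$. The new difficulty is that the bound must hold for all $t\ge 0$, uniformly in time. A Gronwall argument on $[0,T]$ is therefore not enough. Instead we take expectations in It\^{o}'s formula for $\|\phi_j\mathbf{u}^{\varepsilon,\delta}(t)\|^2_{L^2(\R^2)}$, which removes the martingale term, and then use the dissipative term $-(1+|\mathbf{u}|^2)\mathbf{u}$ to produce exponential decay. Concretely, by \eqref{tail_1 p1} (without the supremum) we get
\begin{align*}
\frac{d}{dt}\mathrm{E}\|\phi_j\mathbf{u}^{\varepsilon,\delta}\|^2_{L^2(\R^2)}
+2\mathrm{E}\|\phi_j\mathbf{u}^{\varepsilon,\delta}\|^2_{L^2(\R^2)}
+2\mathrm{E}\int_{\R^2}\phi_j^2|\nabla\mathbf{u}^{\varepsilon,\delta}|^2dx
+2\mathrm{E}\int_{\R^2}\phi_j^2|\mathbf{u}^{\varepsilon,\delta}|^4dx
\le \text{(remaining terms)}.
\end{align*}
The remaining terms are estimated as follows.

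First, the biharmonic term is bounded exactly as in \eqref{tail_1 p}, which gives at most $\frac{c}{j}\delta\|\mathbf{u}^{\varepsilon,\delta}\|^2_{H^3(\R^2)}$. Second, the commutator terms coming from $\Delta$ and from $\mathbf{u}\times\Delta\mathbf{u}$ are bounded by $\frac{c}{j}\|\mathbf{u}^{\varepsilon,\delta}\|^2_{H^1(\R^2)}$ and $\frac{c}{j}\int|\mathbf{u}||\nabla\mathbf{u}||\mathbf{u}|dx$. Note that $(\mathbf{u}\times\Delta\mathbf{u},\phi_j^2\mathbf{u})=-(\mathbf{u}\times\nabla\mathbf{u},\nabla(\phi_j^2)\mathbf{u})=0$, because $\mathbf{u}\times\partial_i\mathbf{u}\perp\mathbf{u}$. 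So this term actually vanishes, and the cross product causes no trouble.

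The noise terms need more care. The It\^{o} correction $\varepsilon^2\sum_k\|\phi_j(\mathbf{u}\times\mathbf{f}_k)\|^2$ combined with $\varepsilon^2\sum_k((\mathbf{u}\times\mathbf{f}_k)\times\mathbf{f}_k,\phi_j^2\mathbf{u})=-\varepsilon^2\sum_k\|\phi_j\mathbf{u}\times\mathbf{f}_k\|^2$ cancels exactly. The leftover pieces are the cross term $2\varepsilon^2\sum_k(\phi_j\mathbf{u}\times\mathbf{f}_k,\phi_j\mathbf{f}_k)=0$ and the forcing $\varepsilon^2\sum_k\|\phi_j\mathbf{f}_k\|^2_{L^2}$. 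This exact cancellation is what keeps the constant $c_5$ of Lemma \ref{tail 6} from appearing. If this bookkeeping failed, we would instead absorb a term $c\|\phi_j\mathbf{u}\|^2$ into $\mathrm{E}\int\phi_j^2|\mathbf{u}|^4$ plus a tail of $\sum_k\|\phi_j\mathbf{f}_k\|$, using Young's inequality.

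Collecting the estimates, we arrive at
\[
\frac{d}{dt}y_j+2y_j\le \frac{c}{j}\,\mathrm{E}\Bigl(\|\mathbf{u}^{\varepsilon,\delta}\|^2_{H^2(\R^2)}+\delta\|\mathbf{u}^{\varepsilon,\delta}\|^2_{H^3(\R^2)}+\int_{\R^2}|\mathbf{u}^{\varepsilon,\delta}|^2|\nabla \mathbf{u}^{\varepsilon,\delta}|^2dx\Bigr)+\sum_k\|\phi_j\mathbf{f}_k\|^2_{L^2(\R^2)},
\]
where $y_j(t)=\mathrm{E}\|\phi_j\mathbf{u}^{\varepsilon,\delta}(t)\|^2_{L^2(\R^2)}$. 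Multiplying by $e^{t}$ (using $2y_j\ge y_j$) and integrating over $[0,t]$ gives
\[
y_j(t)\le e^{-t}\|\phi_j\mathbf{u}_0\|^2_{L^2(\R^2)}+\frac{c}{j}\int_0^te^{s-t}\mathrm{E}(\cdots)ds+\sum_k\|\phi_j\mathbf{f}_k\|^2_{L^2(\R^2)}.
\]
The first inequality of Lemma \ref{ues1a} bounds the middle integral by $M_3+M_3\|\mathbf{u}_0\|^2_{H^1(\R^2)}$, uniformly in $t$, $\varepsilon$ and $\delta$. This uniformity in $t$ is the key point: it is the reason for weighting by $e^{s-t}$, which is exactly the form in which Lemma \ref{ues1a} is stated.

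Finally, $\|\phi_j\mathbf{u}_0\|_{L^2}\to0$ as $j\to\infty$, and $\sum_k\|\phi_j\mathbf{f}_k\|^2_{L^2}\to0$ by \eqref{1.1} and dominated convergence. Hence the right-hand side tends to $0$ uniformly in $t$, $\varepsilon$ and $\delta$, and we choose $J$ accordingly.

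The main obstacle is the rigorous justification of It\^{o}'s formula for $\|\phi_j\mathbf{u}\|^2$ in time-differentiated form, together with the sign and cancellation bookkeeping of the Stratonovich correction terms. We will write the argument in integrated form: apply It\^{o}'s formula to $e^{t}\|\phi_j\mathbf{u}\|^2$, which is legitimate by the $H^3$ regularity in Proposition \ref{pro2}. Where the cancellation is not clean, we will use the absorption via the quartic term described above.
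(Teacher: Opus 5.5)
Your proposal is correct and follows essentially the same route as the paper. You take the expectation form of It\^{o}'s formula for $\|\phi_j\mathbf{u}^{\varepsilon,\delta}\|^2_{L^2(\R^2)}$, use the dissipative term to produce $+2y_j$, bound the biharmonic commutator as in \eqref{tail_1 p}, and use the exact identity $\|\phi_j(\mathbf{u}\times\mathbf{f}_k+\mathbf{f}_k)\|^2_{L^2(\R^2)}+((\mathbf{u}\times\mathbf{f}_k)\times\mathbf{f}_k,\phi_j^2\mathbf{u})_{L^2(\R^2)}=\|\phi_j\mathbf{f}_k\|^2_{L^2(\R^2)}$; the $e^{s-t}$-weighted bounds of Lemma \ref{ues1a} then give uniformity in $t$, $\varepsilon$ and $\delta$. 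The cross-product term vanishes pointwise, since $(\mathbf{u}\times\Delta\mathbf{u})\cdot\mathbf{u}=0$, and the noise bookkeeping closes exactly, so your fallback absorption via the quartic term is never needed.
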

\begin{proof} 
By \eqref{tail_1 p1} we have for all $t\geq 0$,
 $$  {\frac {d}{dt}}
 \mathrm{E} \left (
  \|\phi_j\mathbf{u}^{\varepsilon, \delta}
(t)
\|_{L^2
(\mathbb{R}^2 ) }^2 \right )
  =2 \mathrm{E} \left (
   \Delta \mathbf{u}^{\varepsilon,\delta}
(t), \phi^2_j\mathbf{u}^{\varepsilon, \delta}   (t ) \right )_{L^2
(\mathbb{R}^2 ) }  
$$
$$+ 2
\mathrm{E} \left (
 \delta \nabla \Delta  \mathbf{u}^{\varepsilon, \delta}
(t),  \nabla ( \phi^2_j\mathbf{u}^{\varepsilon, \delta}
  (t )) \right 
 )_{L^2
(\mathbb{R}^2 ) }  
+2 \mathrm{E} \left (
   \mathbf{u}^{\varepsilon, \delta}
   (t)\times \Delta \mathbf{u}^{\varepsilon, \delta}
(t), \phi^2_j\mathbf{u}^{\varepsilon, \delta}  (t)
\right  ) _{L^2
(\mathbb{R}^2 ) } 
$$
$$-2 \mathrm{E} \left (
  (1+|\mathbf{u}^{\varepsilon, \delta}
   (t)|^{2})\mathbf{u}^{\varepsilon, \delta}  (t),
\phi^2 _j\mathbf{u}^{\varepsilon, \delta}  (t) 
\right )_{L^2
(\mathbb{R}^2 ) } 
 $$
\begin{align}\label{tail_1 p1*}
+
\varepsilon^2\sum_{k=1}^\infty
 \mathrm{E} \left (
\|\phi_j (\mathbf{u}^{\varepsilon,\delta} (t) \times \mathbf{f}_k+\mathbf{f}_k
) \| _{L^2
(\mathbb{R}^2 ) }^2 
+ \left(
(\mathbf{u}^{\varepsilon, \delta}
 (t) \times \mathbf{f}_k) \times \mathbf{f}_k, 
 \phi^2_j\mathbf{u}^{\varepsilon, \delta}  (t)\right)  _{L^2
(\mathbb{R}^2 )  }  \right )  .
\end{align}
After simple calculations, we find
that
\be\label{wanc1}
2 \mathrm{E} \left (
   \Delta \mathbf{u}^{\varepsilon,\delta}
(t), \phi^2_j\mathbf{u}^{\varepsilon, \delta}   (t ) \right )_{L^2
(\mathbb{R}^2 ) }  
\le
{\frac {c_1}{j}}
\mathrm{E} \left (
\|
\mathbf{u}^{\varepsilon, \delta}   (t ) \|^2_{H^1
(\mathbb{R}^2 ) }   \right ),
\ee
where $c_1>0$ is a number independent of  $j$,
$\varepsilon$ and $\delta$.
On the other hand, by the argument of 
  \eqref{tail_1 p}, we get
\begin{align}\label{wanc2}
2  \mathrm{E} \left (
  \delta \nabla \Delta \mathbf{u}^{\varepsilon, \delta}
(t),  \nabla ( \phi^2_j\mathbf{u}^{\varepsilon, \delta}   (t ) 
)
\right ) _{L^2
(\mathbb{R}^2 ) }  
\leq\frac{c_2 \delta}{j}  
 \mathrm{E} \left (
\|\mathbf{ u}^{\varepsilon, \delta}
(t)\|^2_{H^{3}
(\mathbb{R}^2) } \right ),
\end{align}
where $c_2>0$ is a number independent of  $j$,
$\varepsilon$ and $\delta$.
 In addition, we have
$$ 
\|\phi_j (\mathbf{u}^{\varepsilon,\delta} (t) \times \mathbf{f}_k+\mathbf{f}_k
) \| _{L^2
(\mathbb{R}^2 ) }^2 
+ \left(
(\mathbf{u}^{\varepsilon, \delta}
 (t) \times \mathbf{f}_k) \times \mathbf{f}_k, 
 \phi^2_j\mathbf{u}^{\varepsilon, \delta}  (t) \right) _{L^2
(\mathbb{R}^2 )  } 
=\|\phi_j  \mathbf{f}_k
  \| _{L^2
(\mathbb{R}^2 ) }^2 ,
$$
which along with  
\eqref{tail_1 p1*}-\eqref{wanc2}
implies that
$$  {\frac {d}{dt}}
 \mathrm{E} \left (
  \|\phi_j\mathbf{u}^{\varepsilon, \delta}
(t)
\|_{L^2
(\mathbb{R}^2 ) }^2 \right )
+2 \mathrm{E} \left (
  \|\phi_j\mathbf{u}^{\varepsilon, \delta}
(t)
\|_{L^2
(\mathbb{R}^2 ) }^2 \right )
$$
$$
\leq
{\frac {c_1}{j}}
\mathrm{E} \left (
\|
\mathbf{u}^{\varepsilon, \delta}   (t ) \|^2_{H^1
(\mathbb{R}^2 ) }   \right )
+ 
\frac{c_2 }{j}  
 \mathrm{E} \left (\delta
\|\mathbf{ u}^{\varepsilon, \delta}
(t)\|^2_{H^{3}
(\mathbb{R}^2) } \right )
+ 
 \sum_{k=1}^\infty
\|\phi_j \mathbf{f}_k\|
_{L^2(\mathbb{R}^2 ) }^2.
$$
Then
by Lemma \ref{ues1a} we obtain that for all $t\ge 0$,
$\varepsilon \in [0,1]$ and
$\delta \in (0,1]$,
$$
 \mathrm{E} \left (
  \|\phi_j\mathbf{u}^{\varepsilon, \delta}
(t)
\|_{L^2
(\mathbb{R}^2 ) }^2 \right )
+
\int_0^t e^{s-t}
 \mathrm{E} \left (
  \|\phi_j\mathbf{u}^{\varepsilon, \delta}
(s)
\|_{L^2
(\mathbb{R}^2 ) }^2 \right ) ds
$$
$$
\le e^{-t}
 \mathrm{E} \left (
  \|\phi_j\mathbf{u}_0
 \|_{L^2
(\mathbb{R}^2 ) }^2 \right )
+
{\frac {c_1}{j}} \int_0^t e^{s-t}
\mathrm{E} \left (
\|
\mathbf{u}^{\varepsilon, \delta}   (s ) \|^2_{H^1
(\mathbb{R}^2 ) }   \right ) ds
$$
$$
+ 
\frac{c_2 }{j}  \int_0^t e^{s-t}
 \mathrm{E} \left (\delta
\|\mathbf{ u}^{\varepsilon, \delta}
(s)\|^2_{H^{3}
(\mathbb{R}^2) } \right )ds
+ 
 \sum_{k=1}^\infty
\|\phi_j \mathbf{f}_k\|
_{L^2(\mathbb{R}^2 ) }^2
$$
\be\label{wanc3}
 \leq \|\phi_j\mathbf{u}_0
\|_{L^2
(\mathbb{R}^2 ) }^2
+ \frac{c_3}{j}+
 \sum_{k=1}^\infty
\|\phi_j \mathbf{f}_k\|
_{L^2(\mathbb{R}^2 ) }^2,
\ee
where $c_3>0$ depends on $\{\mathbf{f}_k\}_{k=1}^\infty$  and
$\| \mathbf{u}_0
\|_{H^1
(\mathbb{R}^2 ) }$
  but not on $j, \delta$ or $ \varepsilon$.
  Then letting
  $j\to \infty$  in
  \eqref{wanc3}, we obtain
   the   desired estimates. 
\end{proof}

Next, we  establish the uniform tail-ends
estimates of solutions to the viscous equation
\eqref{1.3*} 
in  $H^1
(\mathbb{R}^2 )$.

\begin{lemma}\label{tail_3}
If \eqref{1.1} holds,
then for every   $\epsilon'>0$
  and
 $\mathbf{u}_0
 \in H^1
 (\mathbb{R}^2  )$,
 there exists
 $J=J(\epsilon',
    \mathbf{u}_0) \ge 1$
    such that for all 
    $j\ge J$, $t\ge 0$,
  $\varepsilon\in [0,1]$
  and  $\delta\in (0,1]$,
  the solution 
 ${\mathbf{ u}}^{\varepsilon,\delta} (t,
\mathbf{u}_0 )$
of \eqref{1.3*} satisfies:
$$
 \mathrm{E}
\left (
 \int_{|x|>j}
 \left ( |
 {\mathbf{ u}^{\varepsilon,\delta}} (t,
\mathbf{u}_0 )(x) |^2
+
|\nabla
 {\mathbf{ u}^{\varepsilon,\delta}} (t,
\mathbf{u}_0 )(x) |^2
\right )
dx
\right )  <\epsilon'.
$$
\end{lemma}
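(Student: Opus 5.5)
Since Lemma \ref{tail_l2} already controls the $L^2$ tail, we only need the gradient part. We apply It\^{o}'s formula to the weighted functional $\|\phi_j\nabla \mathbf{u}^{\varepsilon,\delta}(t)\|^2_{L^2(\R^2)}$, where $\phi_j(x)=\phi(x/j)$ as before. This is legitimate for the viscous equation \eqref{1.3*}: the solution lies in $L^2(0,T;H^3(\R^2))$, and by \eqref{wang5} the drift $\mathbf{u}\times\Delta\mathbf{u}$ lies in $L^2(0,T;L^2(\R^2))$, so \cite[Theorem 3.2]{kry1981} applies in the triple $H^2\subset H^1\subset L^2$. Equivalently, we compute $d(-\Delta\mathbf{u},\phi_j^2\mathbf{u})$ after integrating by parts. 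Taking expectations, we aim for a differential inequality of the form
$$\frac{d}{dt}\mathrm{E}\|\phi_j\nabla\mathbf{u}\|^2+2\mathrm{E}\|\phi_j\nabla\mathbf{u}\|^2+\mathrm{E}\|\phi_j\Delta\mathbf{u}\|^2\le \frac{c}{j}\mathrm{E}\big(\|\mathbf{u}\|^2_{H^2}+\delta\|\mathbf{u}\|^2_{H^3}+\|\mathbf{u}\|_{H^1}^2+\textstyle\int|\mathbf{u}|^2|\nabla\mathbf{u}|^2dx\big)+c\,\mathrm{E}\|\phi_j\mathbf{u}\|^2_{L^2}+c\sum_k\|\phi_j\mathbf{f}_k\|^2_{H^1}.$$
After multiplying by $e^{t}$ and integrating, we would apply Lemma \ref{ues1a} to make the first group $O(1/j)$ uniformly in $t,\varepsilon,\delta$. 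Lemma \ref{tail_l2} makes the $\phi_j\mathbf{u}$ term small, \eqref{1.1} makes the $\mathbf{f}_k$ tails small, and $e^{-t}\|\phi_j\nabla\mathbf{u}_0\|^2$ is small for large $j$.

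Term by term the estimates go as follows. The Laplacian gives $-2\|\phi_j\Delta\mathbf{u}\|^2$ plus commutator terms bounded by $\frac{c}{j}\|\mathbf{u}\|_{H^2}^2$. The biharmonic term is treated as in \eqref{tail_1 p}, one derivative higher: $-2\delta\|\phi_j\nabla\Delta\mathbf{u}\|^2$ plus terms bounded by $\frac{c\delta}{j}\|\mathbf{u}\|^2_{H^3}$. The cubic term, paired with $-\nabla\cdot(\phi_j^2\nabla\mathbf{u})$, yields the good quantities $-2\|\phi_j\nabla\mathbf{u}\|^2-2\int\phi_j^2(|\mathbf{u}|^2|\nabla\mathbf{u}|^2+2|\mathbf{u}\cdot\nabla\mathbf{u}|^2)$ plus commutators of size $\frac{c}{j}\int(|\mathbf{u}|^3+|\mathbf{u}|)|\nabla\mathbf{u}|$, which are controlled by $\frac cj(\|\mathbf{u}\|^2_{H^1}+\int|\mathbf{u}|^2|\nabla\mathbf{u}|^2+\|\mathbf{u}\|_{L^4}^4)$ and hence by Lemma \ref{ues1a} together with Ladyzhenskaya's inequality. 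The noise terms (It\^{o} correction, the Stratonovich drift, and the quadratic variation) use $\mathbf{f}_k\in W^{1,\infty}\cap H^1$. Their leading parts $\|\phi_j\nabla\mathbf{u}\times\mathbf{f}_k\|^2$ and $((\nabla\mathbf{u}\times\mathbf{f}_k)\times\mathbf{f}_k,\phi_j^2\nabla\mathbf{u})$ cancel exactly, just as in the $L^2$ computation in Lemma \ref{tail_l2}. What remains is bounded by $c\|\phi_j\nabla\mathbf{u}\|\,(\|\phi_j\mathbf{u}\|+\|\phi_j\nabla\mathbf{f}_k\|)$ plus $1/j$ terms, and we absorb half of this into the damping $2\|\phi_j\nabla\mathbf{u}\|^2$. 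The martingale term drops out in expectation.

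The main obstacle is the gyromagnetic term $2(\mathbf{u}\times\Delta\mathbf{u},-\nabla\cdot(\phi_j^2\nabla\mathbf{u}))$. Writing $-\nabla\cdot(\phi_j^2\nabla\mathbf{u})=-\phi_j^2\Delta\mathbf{u}-2\phi_j\nabla\phi_j\cdot\nabla\mathbf{u}$, the first piece vanishes pointwise because $(\mathbf{u}\times\Delta\mathbf{u})\cdot\Delta\mathbf{u}=0$. The remainder is bounded by $\frac{c}{j}\int|\mathbf{u}||\Delta\mathbf{u}||\nabla\mathbf{u}|\le\frac cj(\|\Delta\mathbf{u}\|^2+\int|\mathbf{u}|^2|\nabla\mathbf{u}|^2)$. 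This is exactly why Lemma \ref{ues1a} includes the weighted quantity $\int|\mathbf{u}|^2|\nabla\mathbf{u}|^2$, so the term is harmless at the level of expectations. The real subtlety is justifying It\^{o}'s formula for this functional, which is precisely where the extra regularity \eqref{wang5} of the viscous problem (unavailable for \eqref{1.3}) is used. Once the Gronwall-type estimate is in place, we choose $J$ so that every right-hand-side contribution is below $\epsilon'/2$, and we add Lemma \ref{tail_l2} to conclude.
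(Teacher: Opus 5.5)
Your proposal is correct and follows essentially the paper's route: It\^{o}'s formula for $\|\phi_j\nabla\mathbf{u}^{\varepsilon,\delta}\|^2_{L^2(\R^2)}$ with $e^{s-t}$ weighting, the pointwise cancellation $(\mathbf{u}\times\Delta\mathbf{u})\cdot\Delta\mathbf{u}=0$ whose $\frac{c}{j}\int|\mathbf{u}||\nabla\mathbf{u}||\Delta\mathbf{u}|$ remainder is controlled by the $\int|\mathbf{u}|^2|\nabla\mathbf{u}|^2$ term of Lemma \ref{ues1a}, a biharmonic commutator of size $\frac{c}{j}\delta\|\mathbf{u}\|^2_{H^3}$, and Lemma \ref{tail_l2} (equivalently \eqref{wanc3}) for the $\|\phi_j\mathbf{u}\|^2$ term. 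Two small corrections: because $\delta\Delta^2\mathbf{u}$ only lies in $L^2(0,T;H^{-1}(\R^2))$, the Krylov--Rozovskii triple must be $H^3\subset H^1\subset H^{-1}$ (this matches the paper's $(H^{-1},H^1)$ pairing), not $H^2\subset H^1\subset L^2$; and the cubic term produces no commutator at all, since $2((1+|\mathbf{u}|^2)\mathbf{u},\nabla\cdot(\phi_j^2\nabla\mathbf{u}))=-2(\nabla((1+|\mathbf{u}|^2)\mathbf{u}),\phi_j^2\nabla\mathbf{u})$ exactly, so you never need the $\|\mathbf{u}\|^4_{L^4}$ bound you attributed to Lemma \ref{ues1a} plus Ladyzhenskaya (that lemma does not directly supply it).
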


\begin{proof}
Let $\phi$ and $\phi_j$ be the same cut-off
 functions as before.
  By \eqref{1.3*} and  It\^{o}'s formula,
     we get for all $t\ge 0$,
$$\| \phi_j \nabla\mathbf{u}^{\varepsilon, \delta}(t)\|
_{L^2
(\mathbb{R}^2) }^2-\| \phi_j \nabla\mathbf{u}_0\|
_{L^2
(\mathbb{R}^2) }^2-2\int_{0}^{t}
\left (   \delta \Delta^2 \mathbf{u}^ {\varepsilon, \delta} (s),  \text{div}
\left (\phi_j^2 \nabla\mathbf{u}^ {\varepsilon, \delta}(s)
\right )  \right )_{(H^{-1}
(\mathbb{R}^2),  H^{1}
(\mathbb{R}^2)) } ds$$
$$= -2\int_{0}^{t}
\left (    \Delta \mathbf{u}^{ \varepsilon, \delta}(s),  \text{div}
\left (\phi_j^2 \nabla\mathbf{u}^{ \varepsilon, \delta}(s)
\right )  \right )_{L^2
(\mathbb{R}^2) } ds$$
$$- 2\int_{0}^{t}
\left (  \mathbf{u}^{ \varepsilon, \delta}(s)\times \Delta \mathbf{u}^{ 
\varepsilon, \delta
}(s),
\text{div}
\left (   \phi_j^2
 \nabla\mathbf{u}^{
 \varepsilon, \delta
 }(s) \right ) \right ) _{L^2
(\mathbb{R}^2) } ds$$
 $$-2\int_{0}^{t}\left(  \phi_j \nabla\left((1+|\mathbf{u}^{
 \varepsilon, \delta
 }(s)|^{2})\mathbf{u}^{\varepsilon, \delta}(s)\right),
   \phi_j \nabla\mathbf{u}^{
   \varepsilon, \delta
   }(s)\right)_{L^2
(\mathbb{R}^2) } ds
$$
$$
 +\varepsilon^2\int_{0}^{t}\sum_{k= 1}^\infty (
 \phi_j
\nabla((\mathbf{u}^{
\varepsilon, \delta
}(s)\times \mathbf{f}_k)\times \mathbf{f}_k),   \phi_j \nabla\mathbf{u}^{
\varepsilon, \delta
}(s))_{L^2
(\mathbb{R}^2) }ds$$
$$
+2\varepsilon\int_{0}^{t}\sum_{k= 1}^\infty
(  \phi_j \nabla(\mathbf{u}^{
\varepsilon, \delta
}(s)\times \mathbf{f}_k+\mathbf{f}_k)  ,   \phi_j \nabla\mathbf{u}^{
\varepsilon, \delta
}(s))_{L^2
(\mathbb{R}^2) } dW_k
$$
\begin{align}\label{tail_2}
+\varepsilon^2\int_{0}^{t}\sum_{k= 1}^\infty \|  \phi_j \nabla(\mathbf{u}^{
\varepsilon, \delta
}(s)\times \mathbf{f}_k+\mathbf{f}_k)\|_{ _{L^2
(\mathbb{R}^2) }}^2ds.
 \end{align}
 By \eqref{tail_2} we have
    for all $t\ge 0$,
$$\| \phi_j \nabla\mathbf{u}^{\varepsilon, \delta}(t)\|
_{L^2
(\mathbb{R}^2) }^2-
e^{-t}
\| \phi_j \nabla\mathbf{u}_0\|
_{L^2
(\mathbb{R}^2) }^2
$$
$$
=
2\int_{0}^{t}
e^{s-t}
\left (   \delta \Delta^2 \mathbf{u}^ {\varepsilon, \delta} (s),  \text{div}
\left (\phi_j^2 \nabla\mathbf{u}^ {\varepsilon, \delta}(s)
\right )  \right )_{(H^{-1}
(\mathbb{R}^2),  H^{1}
(\mathbb{R}^2)) } ds$$
$$  -2\int_{0}^{t} e^{s-t}
\left (    \Delta \mathbf{u}^{ \varepsilon, \delta}(s),  \text{div}
\left (\phi_j^2 \nabla\mathbf{u}^{ \varepsilon, \delta}(s)
\right )  \right )_{L^2
(\mathbb{R}^2) } ds$$
$$- 2\int_{0}^{t}e^{s-t}
\left (  \mathbf{u}^{ \varepsilon, \delta}(s)\times \Delta \mathbf{u}^{ 
\varepsilon, \delta
}(s),
\text{div}
\left (   \phi_j^2
 \nabla\mathbf{u}^{
 \varepsilon, \delta
 }(s) \right ) \right ) _{L^2
(\mathbb{R}^2) } ds$$
 $$-2\int_{0}^{t}e^{s-t}
 \left(  \phi_j \nabla\left((1+|\mathbf{u}^{
 \varepsilon, \delta
 }(s)|^{2})\mathbf{u}^{\varepsilon, \delta}(s)\right),
   \phi_j \nabla\mathbf{u}^{
   \varepsilon, \delta
   }(s)\right)_{L^2
(\mathbb{R}^2) } ds
$$
$$
 +\varepsilon^2
  \sum_{k= 1}^\infty
 \int_{0}^{t}e^{s-t}
 (
 \phi_j
\nabla((\mathbf{u}^{
\varepsilon, \delta
}(s)\times \mathbf{f}_k)\times \mathbf{f}_k),   \phi_j \nabla\mathbf{u}^{
\varepsilon, \delta
}(s))_{L^2
(\mathbb{R}^2) }ds$$
$$
+2\varepsilon
\sum_{k= 1}^\infty
\int_{0}^{t}e^{s-t}
 (  \phi_j \nabla(\mathbf{u}^{
\varepsilon, \delta
}(s)\times \mathbf{f}_k+\mathbf{f}_k)  ,   \phi_j \nabla\mathbf{u}^{
\varepsilon, \delta
}(s))_{L^2
(\mathbb{R}^2) } dW_k
$$
\begin{align}\label{tail_2 a}
+\varepsilon^2
\sum_{k= 1}^\infty
\int_{0}^{t}e^{s-t}
 \|  \phi_j \nabla(\mathbf{u}^{
\varepsilon, \delta
}(s)\times \mathbf{f}_k+\mathbf{f}_k)\|_{L^2
(\mathbb{R}^2) }^2ds
+\int_0^t
e^{s-t} \|  \phi_j \nabla\mathbf{u}^{
\varepsilon, \delta
}(s)\| _{L^2
(\mathbb{R}^2) }^2ds.
 \end{align}

For the artificial viscosity term, we have
for $t\ge 0$,
$$
2\int_{0}^{t} e^{s-t}
\left (   \delta \Delta^2 \mathbf{u}^{ 
\varepsilon, \delta
}(s),  \text{div}
\left (\phi_j^2 \nabla\mathbf{u}^{
\varepsilon, \delta
}(s)
\right )  \right )_{(H^{-1}
(\mathbb{R}^2),  H^{1}
(\mathbb{R}^2) )}ds
$$
$$
=
{\frac 4j}
\int_{0}^{t}e^{s-t}
\left (   \delta \Delta^2 \mathbf{u}^{ 
\varepsilon, \delta
}(s),   \phi_j \nabla \phi(x/j)  \nabla\mathbf{u}^{
\varepsilon, \delta
}(s)  \right )_{(H^{-1}
(\mathbb{R}^2),  H^{1}
(\mathbb{R}^2) )}ds
$$
$$ +
2\int_{0}^{t}e^{s-t}
\left (   \delta \Delta^2 \mathbf{u}^{ 
\varepsilon, \delta
}(s),   \phi_j^2 \Delta \mathbf{u}^{
\varepsilon, \delta
}(s)   \right )_{(H^{-1}
(\mathbb{R}^2),  H^{1}
(\mathbb{R}^2) )}ds
$$
$$
=-
{\frac 4j}
\int_{0}^{t}e^{s-t}
\left (   \delta \nabla  \Delta  \mathbf{u}^{ 
\varepsilon, \delta
}(s),   \nabla ( \phi_j \nabla \phi(x/j)  \nabla\mathbf{u}^{
\varepsilon, \delta
}(s) )  \right )_{ L^2
(\mathbb{R}^2 )}ds
$$
$$ -
2\int_{0}^{t}e^{s-t}
\left (   \delta  \nabla \Delta  \mathbf{u}^{ 
\varepsilon, \delta
}(s),   \nabla ( \phi_j^2 \Delta \mathbf{u}^{
\varepsilon, \delta
}(s))    \right )_{ L^2
(\mathbb{R}^2 )}ds
$$
 \begin{align}\label{tail_2 b}
\leq \frac{c_1}{j}\int_{0}^{t}
\delta
e^{s-t}
 \|\nabla \Delta \mathbf{u}^{
\varepsilon, \delta}(s)\|^2_{L^2(\mathbb{R}^2)}
 ds
 +\frac{c_1}{j}\int_{0}^{t} e^{s-t}
 \| \mathbf{u}^{
\varepsilon, \delta}(s)\|^2_{H^2(\mathbb{R}^2)} ds,
 \end{align}
where $  c_1>0$
is a number    independent of $j$, $ \varepsilon
$ and $ \delta$.

 For   other terms on the right-hand side of \eqref{tail_2 a}, 
by the argument of   \cite[Lemma 4.5]{Q1} we find
$$
-2\int_{0}^{t}e^{s-t}
\left (    \Delta \mathbf{u}^{\varepsilon, \delta}(s),  \text{div}
\left (\phi_j^2 \nabla\mathbf{u}^{
\varepsilon, \delta
}(s)
\right )  \right )_{L^2
(\mathbb{R}^2) } ds
$$
$$- 2\int_{0}^{t}e^{s-t}
\left (  \mathbf{u}^{
\varepsilon, \delta
}(s)\times \Delta \mathbf{u}^{
\varepsilon, \delta
}(s),
\text{div}
\left (   \phi_j^2
 \nabla\mathbf{u}^{
 \varepsilon, \delta
 }(s) \right ) \right ) _{L^2
(\mathbb{R}^2) } ds
$$
$$-2\int_{0}^{t}e^{s-t}
\left(  \phi_j \nabla\left((1+|\mathbf{u}^{
\varepsilon, \delta
}(s)|^{2})\mathbf{u}^{
\varepsilon, \delta
}(s)\right),
   \phi_j \nabla\mathbf{u}^{
   \varepsilon, \delta
   }(s)\right)_{L^2
(\mathbb{R}^2) } ds
$$
$$
 +\varepsilon^2
  \sum_{k= 1}^\infty
 \int_{0}^{t}
 e^{s-t}
 (
 \phi_j
\nabla((\mathbf{u}^{
\varepsilon, \delta
}(s)\times \mathbf{f}_k)\times \mathbf{f}_k),   \phi_j \nabla\mathbf{u}^{
\varepsilon, \delta
}(s))_{L^2
(\mathbb{R}^2) }ds$$
$$+\varepsilon^2
\sum_{k= 1}^\infty
\int_{0}^{t}
e^{s-t}
 \|  \phi_j \nabla(\mathbf{u}^{
\varepsilon, \delta
}(s)\times \mathbf{f}_k+\mathbf{f}_k)\|_{L^2
(\mathbb{R}^2)}^2ds$$
$$
\leq {\frac {c_2}j}
  \int_{0}^{t}e^{s-t}
  \left (\|\mathbf{u}^{
  \varepsilon, \delta
  } (s) \|^2
 _{H^2(\mathbb{R}^2)}
 +\int_{\mathbb{R}^2}
 |\mathbf{u}^{
 \varepsilon, \delta
 } (s) | ^2
 |\nabla \mathbf{u}^{
 \varepsilon, \delta
 } (s) |^2 dx \right )ds
  + c_2
\int_{0}^{t}e^{s-t}
\|  \phi_j  \mathbf{u}^{
\varepsilon, \delta
} (s)\|^2_{L^2(\mathbb{R}^2)}ds
$$
\begin{align}\label{tail_4}
-\int_{0}^{t}e^{s-t}
 \|  \phi_j   \nabla\mathbf{u}^{
 \varepsilon, \delta
 } (s) \|_{L^2
 (\mathbb{R}^2) }^2ds +c_2
\sum_{k=1}^\infty
\left (
\| \phi_j   \mathbf{f} _k \|^2
_{L^{ 2}(\mathbb{R}^2)}
+
 \| \phi_j \nabla \mathbf{f} _k \|^2
_{L^{ 2}(\mathbb{R}^2)}
\right ),
\end{align}
where $c_2>0$ depends on
$\{\mathbf{f} _k\}_{k=1}^\infty$ but not on  $j$,
$\varepsilon$ or $ \delta$.

From \eqref{tail_2}-\eqref{tail_4}, we get for all $\varepsilon\in [0,1]$, $\delta\in (0,1]$,
$$
\mathrm{E}\left(
\| \phi_j \nabla\mathbf{u}^{\varepsilon, \delta}(t)\|
_{L^2
(\mathbb{R}^2) }^2
\right )
\le 
e^{-t}
\| \phi_j \nabla\mathbf{u}_0\|
_{L^2
(\mathbb{R}^2) }^2
 +\frac{c_1}{j}\mathrm{E}
 \left (
 \int_{0}^{t}
\delta e^{s-t}\|\nabla \Delta \mathbf{u}^{
\varepsilon, \delta}(s)\|^2_{L^2(\mathbb{R}^2)}
 ds
 \right )
 $$
$$
+ {\frac {c_1+ c_2}j}
  \mathrm{E}\left (
  \int_{0}^{t}e^{s-t}\left (\|\mathbf{u}^{
  \varepsilon, \delta} (s) \|^2
 _{H^2(\mathbb{R}^2)}
 +\int_{\mathbb{R}^2}
 |\mathbf{u}^{\varepsilon,
 \delta} (s) | ^2
 |\nabla \mathbf{u}^{
 \varepsilon, \delta} (s) |^2 dx \right )ds
 \right )
$$
 \be\label{tail_4 a}
  + c_2
\mathrm{E}
\left (
\int_{0}^{t}e^{s-t}
\|  \phi_j  \mathbf{u}^{\varepsilon,
\delta} (s)\|^2_{L^2(\mathbb{R}^2)}ds
\right )
 +c_2
\sum_{k=1}^\infty
\left (
\| \phi_j   \mathbf{f} _k \|^2
_{L^{ 2}(\mathbb{R}^2)}
+
 \| \phi_j \nabla \mathbf{f} _k \|^2
_{L^{ 2}(\mathbb{R}^2)}
\right ).
\ee
By 
\eqref{wanc3},
  \eqref{tail_4 a} and
   Lemma \ref{ues1a},
 we   obtain for all 
 $t\ge 0$,
 $\varepsilon\in [0,1]$, $\delta\in (0,1]$,
$$\mathrm{E}\left(\| \phi_j \nabla\mathbf{u}^{
\varepsilon, \delta}(t)\|
_{L^2
(\mathbb{R}^2) }^2\right)
\leq
\left (\| \phi_j  \mathbf{u}_0
 \|
_{L^2
(\mathbb{R}^2) }^2 
+
 \| \phi_j \nabla\mathbf{u}_0 
  \|
_{L^2
(\mathbb{R}^2) }^2
\right )
 $$
\begin{align}\label{tail 5}
 +\frac{c_3}{j}
+ c_3
 \sum_{k=1}^\infty
\left (
\| \phi_j   \mathbf{f} _k \|^2
_{L^{ 2}(\mathbb{R}^2)}
+
 \| \phi_j \nabla \mathbf{f} _k \|^2
_{L^{ 2}(\mathbb{R}^2)}
\right ),
\end{align}
where $c_3>0$
is a number depending on
  $\{\mathbf{f} _k\}_{k=1}^\infty$
  and $\nabla\mathbf{u}_0 $,
  but not on $j$,
  $\varepsilon$ or $\delta$. 
  
By  \eqref{1.1} and
\eqref{tail 5} we find that
for every  $\epsilon'>0$, there exists $J=J(\epsilon',
    \mathbf{u}_0) \ge 1$
 such that
 for all $j\ge J$, 
 $t\ge 0$,
 $\varepsilon\in [0,1]$ and  $\delta\in (0,1]$,
  $$ \mathrm{E}\left(\| \phi_j \nabla\mathbf{u}^{
  \varepsilon, \delta}(t)\|
_{L^2
(\mathbb{R}^2) }^2\right)<\epsilon',$$
which along with Lemma \ref{tail_l2}
completes  the proof.
\end{proof}

By \eqref{tail_4 a} we  also obtain the
following uniform tail-ends estimates
of solutions
of \eqref{1.3*}
with bounded initial data
for large time.

\begin{corollary}\label{tail_3a}
If \eqref{1.1} holds, 
then for every $\epsilon'>0$
and $R>0$, there exist
$T=T(\epsilon', R)>0$
and
  $J=J(\epsilon') \ge 1$
 such that
 for all
 $t\ge T$,
 $j\ge J$, $\varepsilon\in [0,1]$
 and  $\delta\in (0,1]$,
 the solution 
$ \mathbf{ u}^{\varepsilon,\delta} (t,
\mathbf{u}_0 )$
 of
 \eqref{1.3*} with
 $\| \mathbf{u}_0\| _{H^1(\mathbb{R}^2)}\leq R$
 satisfies:  
$$
 \mathrm{E}
\left (
 \int_{|x|>j}
 \left ( |
 {\mathbf{ u}^{\varepsilon,\delta}} (t,
\mathbf{u}_0 )(x) |^2
+
|\nabla
 {\mathbf{ u}^{\varepsilon,\delta}} (t,
\mathbf{u}_0 )(x) |^2
\right )
dx
\right )  <\epsilon'.
$$
\end{corollary}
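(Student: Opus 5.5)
The plan is to revisit the two inequalities already obtained in the proofs of Lemma \ref{tail_l2} and Lemma \ref{tail_3}, namely \eqref{wanc3} and \eqref{tail_4 a}, and track how the dependence on $\mathbf{u}_0$ enters. The goal is to show that every such term is either multiplied by $e^{-t}$ or is of the form $c/j$ with $c$ depending only on $R$. The proof of Corollary \ref{tail_3a} then consists of choosing $J$ first to make the $j$-dependent terms small, and then $T$ to make the $e^{-t}$ terms small.

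First, the $L^2$ part. In the derivation of \eqref{wanc3}, before the final crude bound, we have for all $t\ge 0$
\[
\mathrm{E}\|\phi_j\mathbf{u}^{\varepsilon,\delta}(t)\|^2_{L^2(\mathbb{R}^2)}
\le e^{-t}\|\phi_j\mathbf{u}_0\|^2_{L^2(\mathbb{R}^2)}
+\frac{c}{j}\int_0^t e^{s-t}\mathrm{E}\Big(\|\mathbf{u}^{\varepsilon,\delta}(s)\|^2_{H^1(\mathbb{R}^2)}+\delta\|\mathbf{u}^{\varepsilon,\delta}(s)\|^2_{H^3(\mathbb{R}^2)}\Big)ds
+\sum_{k=1}^\infty\|\phi_j\mathbf{f}_k\|^2_{L^2(\mathbb{R}^2)}.
\]
By the first estimate of Lemma \ref{ues1a}, the integral is bounded by $M_3+M_3R^2$, uniformly in $t\ge0$, $\varepsilon$, $\delta$, whenever $\|\mathbf{u}_0\|_{H^1(\mathbb{R}^2)}\le R$. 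Hence the right-hand side is at most
\[
e^{-t}R^2+\frac{c(1+R^2)}{j}+\sum_k\|\phi_j\mathbf{f}_k\|^2_{L^2(\mathbb{R}^2)}.
\]

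Next, the gradient part. I take expectations in \eqref{tail_4 a} and bound each term:
\begin{itemize}
\item the term $e^{-t}\|\phi_j\nabla\mathbf{u}_0\|^2$ is at most $e^{-t}R^2$;
\item the terms carrying $\frac{c_1}{j}$ and $\frac{c_1+c_2}{j}$ are controlled by Lemma \ref{ues1a}, which again gives a bound $\frac{c(1+R^2)}{j}$;
\item the $\mathbf{f}_k$-terms do not depend on $\mathbf{u}_0$ at all.
\end{itemize}
The only delicate term is $c_2\int_0^t e^{s-t}\mathrm{E}\|\phi_j\mathbf{u}^{\varepsilon,\delta}(s)\|^2\,ds$. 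For it I insert the pointwise-in-$s$ bound from the previous step:
\[
\int_0^t e^{s-t}e^{-s}R^2\,ds=te^{-t}R^2,
\]
while the remaining parts contribute at most $\frac{c(1+R^2)}{j}+\sum_k\|\phi_j\mathbf{f}_k\|^2$, since $\int_0^te^{s-t}ds\le1$. Alternatively, \eqref{wanc3} already controls $\int_0^te^{s-t}\mathrm{E}\|\phi_j\mathbf{u}\|^2ds$, but with $\|\phi_j\mathbf{u}_0\|^2$ not decaying. So the refined version with $te^{-t}$ is what I will use.

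Combining the two parts gives, for all $t\ge0$, $j$, $\varepsilon$, $\delta$,
\[
\mathrm{E}\int_{|x|>j}\big(|\mathbf{u}^{\varepsilon,\delta}|^2+|\nabla\mathbf{u}^{\varepsilon,\delta}|^2\big)dx
\le C(1+t)e^{-t}R^2+\frac{C(1+R^2)}{j}+C\sum_k\big(\|\phi_j\mathbf{f}_k\|^2_{L^2}+\|\phi_j\nabla\mathbf{f}_k\|^2_{L^2}\big).
\]
Here I use $\phi_j=1$ on $|x|\ge j$ to pass from the cut-off integrals to the tail integrals.

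The main obstacle is that $J$ is required to depend only on $\epsilon'$, not on $R$. The term $C(1+R^2)/j$ violates this if taken literally. I would first try to remove the $R$-dependence by also multiplying by the decay factor. Specifically, I would apply Lemma \ref{ues1a} only on $[t/2,t]$, where $\mathrm{E}\|\mathbf{u}(s)\|^2_{H^1}\le M_3+M_3e^{-s}R^2$. On $[0,t/2]$ the weight $e^{s-t}\le e^{-t/2}$, and this portion is at most $e^{-t/2}\cdot C(1+R^2)(1+t)$. This yields
\[
\frac{C}{j}\big(1+R^2e^{-t/4}\big)+C(1+R^2)(1+t)e^{-t/2}.
\]

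With this bound, I choose $J=J(\epsilon')$ so that $C/j$ and the $\mathbf{f}_k$-tails, which are small by \eqref{1.1}, are each less than $\epsilon'/3$ for $j\ge J$. I then choose $T=T(\epsilon',R)$ so that all terms containing $R$ carry a decaying factor in $t$ and sum to less than $\epsilon'/3$ for $t\ge T$. Together these give the claimed estimate.
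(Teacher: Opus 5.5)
This is correct and is essentially the argument the paper has in mind when it says the corollary follows ``by \eqref{tail_4 a}''. You rightly replace the non-decaying $\|\phi_j\mathbf{u}_0\|^2_{L^2(\mathbb{R}^2)}$ from the last line of \eqref{wanc3} by a bound that decays in $t$ (the next-to-last line of \eqref{wanc3} already carries the factor $e^{-t}$), control the $c/j$ terms by Lemma \ref{ues1a}, and choose $J$ before $T$. Your splitting of $[0,t]$ is unnecessary: the first estimate of Lemma \ref{ues1a} bounds every weighted integral $\int_0^t e^{s-t}\mathrm{E}(\cdots)\,ds$ in the $c/j$ terms by $M_3+M_3e^{-t}R^2$, including the $H^2$, $\delta H^3$ and $|\mathbf{u}|^2|\nabla\mathbf{u}|^2$ parts, for which your pointwise $H^1$ bound on $[t/2,t]$ would not suffice. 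So the $R$-dependence there already decays, and $J$ depends only on $\epsilon'$ and $\{\mathbf{f}_k\}$.
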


We are now in a position to present the
main result of this section on the
uniform tail-ends estimates of solutions
of \eqref{1.3}.
 
\begin{proposition}\label{pro3a}
 If \eqref{1.1} holds,
then for every   $\epsilon'>0$
  and
 $\mathbf{u}_0
 \in H^1
 (\mathbb{R}^2  )$,
 there exists
 $J=J(\epsilon',
    \mathbf{u}_0) \ge 1$
    such that for all 
    $j\ge J$, $t\ge 0$ and
  $\varepsilon\in [0,1]$,
  the solution 
 ${\mathbf{ u}}^{\varepsilon} (t,
\mathbf{u}_0 )$
of \eqref{1.3} satisfies:
$$
 \mathrm{E}
\left (
 \int_{|x|>j}
 \left ( |
 {\mathbf{ u}^{\varepsilon}} (t,
\mathbf{u}_0 )(x) |^2
+
|\nabla
 {\mathbf{ u}^{\varepsilon}} (t,
\mathbf{u}_0 )(x) |^2
\right )
dx
\right )  <\epsilon'.
$$
 \end{proposition}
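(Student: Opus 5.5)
Fix $\epsilon'>0$ and $\mathbf{u}_0\in H^1(\R^2)$, and take $J=J(\epsilon'/2,\mathbf{u}_0)$ from Lemma \ref{tail_3}. This $J$ depends on neither $t$, $\varepsilon$ nor $\delta$. Fix $j\ge J$, $t\ge 0$ and $\varepsilon\in[0,1]$. Work with the cut-off functional
$$
F_j(\mathbf{v})=\|\phi_j\mathbf{v}\|^2_{L^2(\R^2)}+\|\phi_j\nabla\mathbf{v}\|^2_{L^2(\R^2)} ,
$$
whose value bounds $\int_{|x|>j}(|\mathbf{v}|^2+|\nabla\mathbf{v}|^2)dx$ from above, since $\phi_j=1$ for $|x|\ge \frac34 j$. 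Lemma \ref{tail_3} (more precisely \eqref{wanc3} and \eqref{tail 5}) gives $\mathrm{E}\,F_j(\mathbf{u}^{\varepsilon,\delta}(t))<\epsilon'/2$ for all $\delta\in(0,1]$, possibly after enlarging $J$; the same argument can be run with $\phi_j$ in place of the indicator. The goal is to transfer this bound to $\mathbf{u}^\varepsilon(t)$ as $\delta\to0$.

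\textbf{Pass to a subsequence converging almost surely.} Apply Proposition \ref{pro3.1} on $[0,T]$ with any $T\ge t$. This gives $\mathbf{u}^{\varepsilon,\delta}(t)\to\mathbf{u}^\varepsilon(t)$ in probability in $L^2(\R^2)$. Choose $\delta_n\to0$ such that $\mathbf{u}^{\varepsilon,\delta_n}(t)\to\mathbf{u}^\varepsilon(t)$ in $L^2(\R^2)$, P-almost surely.

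\textbf{Upgrade to weak convergence in $H^1$.} Convergence in $L^2$ alone does not control the gradient part of $F_j$, and this is the main obstacle. I would handle it by weak lower semicontinuity. By \eqref{pro2 1}, with $\delta$-independent bounds, $\sup_n\mathrm{E}\|\mathbf{u}^{\varepsilon,\delta_n}(t)\|^2_{H^1}<\infty$; the same bound holds for $\mathbf{u}^\varepsilon(t)$ by \eqref{wang2}. Fix $\omega$ in the full-measure set of convergence. On the event where $\liminf_n\|\mathbf{u}^{\varepsilon,\delta_n}(t)\|_{H^1}<\infty$, pass to a further $\omega$-dependent subsequence that is bounded in $H^1$. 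Along it, $\mathbf{u}^{\varepsilon,\delta_n}(t)\rightharpoonup\mathbf{u}^\varepsilon(t)$ weakly in $H^1$, because the weak limit must coincide with the $L^2$ limit. Then $\phi_j\nabla\mathbf{u}^{\varepsilon,\delta_n}(t)\rightharpoonup\phi_j\nabla\mathbf{u}^\varepsilon(t)$ weakly in $L^2$, since multiplication by the bounded smooth function $\phi_j$ is weakly continuous. Weak lower semicontinuity of the norm therefore gives
$$
F_j(\mathbf{u}^\varepsilon(t))\le\liminf_{n\to\infty}F_j(\mathbf{u}^{\varepsilon,\delta_n}(t))
$$
on that event. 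On the complementary event the right-hand side equals $+\infty$, so the inequality holds trivially. Hence it holds almost surely.

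\textbf{Conclude with Fatou's lemma.} Fatou's lemma gives
$$
\mathrm{E}\,F_j(\mathbf{u}^\varepsilon(t))\le\liminf_{n\to\infty}\mathrm{E}\,F_j(\mathbf{u}^{\varepsilon,\delta_n}(t))\le\epsilon'/2<\epsilon' .
$$
Because $F_j$ dominates the tail integral over $\{|x|>j\}$, the claimed estimate follows for all $j\ge J$, $t\ge0$ and $\varepsilon\in[0,1]$.
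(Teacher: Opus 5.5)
Your strategy is the paper's: extract $\delta_n\to0$ with $\mathbf{u}^{\varepsilon,\delta_n}(t)\to\mathbf{u}^{\varepsilon}(t)$ in $L^2(\R^2)$ almost surely, prove an almost sure lower semicontinuity inequality, and close with Fatou's lemma and the $\delta$-uniform bounds \eqref{wanc3}, \eqref{tail 5}. Extracting at the single fixed $t$ is enough, so you rightly skip the paper's diagonal argument.

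The lower semicontinuity step, however, does not work as written. You extract an $\omega$-dependent subsequence $n_k$ chosen only to be bounded in $H^1(\R^2)$. Weak lower semicontinuity along it gives $F_j(\mathbf{u}^{\varepsilon}(t))\le\liminf_k F_j(\mathbf{u}^{\varepsilon,\delta_{n_k}}(t))$. A subsequential $\liminf$ can be strictly larger than $\liminf_n F_j(\mathbf{u}^{\varepsilon,\delta_n}(t))$, and the latter is what Fatou's lemma needs.

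To reach the full $\liminf$ you must take the subsequence that realizes $\liminf_n F_j$. Nothing guarantees that this subsequence is bounded in $H^1(\R^2)$, because $F_j$ gives no control of $\nabla\mathbf{u}^{\varepsilon,\delta_n}(t)$ on $\{|x|\le j/2\}$, where $\phi_j=0$. For the same reason, your claim that $\liminf_nF_j(\mathbf{u}^{\varepsilon,\delta_n}(t))=+\infty$ on $\{\liminf_n\|\mathbf{u}^{\varepsilon,\delta_n}(t)\|_{H^1}=\infty\}$ is false. That event is in fact null, by Fatou and your bound $\sup_n\mathrm{E}\|\mathbf{u}^{\varepsilon,\delta_n}(t)\|^2_{H^1}<\infty$, but this does not repair the first point.

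The missing idea is to draw compactness from $F_j$ itself rather than from the global $H^1$ norm. Suppose $L=\liminf_nF_j(\mathbf{u}^{\varepsilon,\delta_n}(t))<\infty$, and choose $n_k$ with $F_j(\mathbf{u}^{\varepsilon,\delta_{n_k}}(t))\to L$. Then $\phi_j\nabla\mathbf{u}^{\varepsilon,\delta_{n_k}}(t)$ is bounded in $L^2(\R^2)$, so a further subsequence converges weakly to some $w$. Write $\phi_j\partial_i\mathbf{v}=\partial_i(\phi_j\mathbf{v})-(\partial_i\phi_j)\mathbf{v}$ and test against $C_0^\infty$ functions; the $L^2$ convergence then identifies $w=\phi_j\nabla\mathbf{u}^{\varepsilon}(t)$. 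Weak lower semicontinuity of the norm gives $F_j(\mathbf{u}^\varepsilon(t))\le L$.

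This is exactly what the paper packages as the lower semicontinuity on $L^2(\mathcal{O}_j)$ of the functional $\varphi$ in \eqref{pro3a p3}, namely $\|\cdot\|^2_{H^1(\mathcal{O}_j)}$ extended by $+\infty$. Working on $\mathcal{O}_j$ with the statement of Lemma \ref{tail_3} directly also makes your cut-off $\phi_j$ unnecessary.

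Alternatively, you can keep your global-$H^1$ argument by applying it to $F_j+\lambda\|\cdot\|^2_{H^1(\R^2)}$ with $\lambda>0$, which does control the $H^1$ norm. Fatou then gives $\mathrm{E}\,F_j(\mathbf{u}^\varepsilon(t))\le\epsilon'/2+\lambda\sup_n\mathrm{E}\|\mathbf{u}^{\varepsilon,\delta_n}(t)\|^2_{H^1(\R^2)}$, and you let $\lambda\to0$.
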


\begin{proof}
Let $\varepsilon \in [0,1]$ be fixed. 
By Proposition \ref{pro3.1} we find that
there exists a sequence
 $\delta_n \to 0$ such that,
 $\mathrm{P}$-almost surely,
 $$
 \lim_{n\to \infty}
 {\mathbf{ u}}^{\varepsilon, \delta_n} (\cdot,
\mathbf{u}_0 )
= {\mathbf{ u}}^{\varepsilon } (\cdot,
\mathbf{u}_0 )
\ \text{ in } C([0, 1]; L^2(\R^2)),
$$
 where ${\mathbf{ u}}^{\varepsilon, \delta_n} (\cdot,
\mathbf{u}_0 )$
and
$
{\mathbf{ u}}^{\varepsilon} (\cdot,
\mathbf{u}_0 )$ are the solutions
of \eqref{1.3*}
and \eqref{1.3}, respectively.

By Proposition \ref{pro3.1}  again,
we see that
there exists a subsequence
$\{\delta_{n_k}\}_{k=1}^\infty$
of 
$\{\delta_{n}\}_{n=1}^\infty$
  such that,
 $\mathrm{P}$-almost surely,
$$
 \lim_{k\to \infty}
 {\mathbf{ u}}^{\varepsilon, \delta_{n_k}} (\cdot,
\mathbf{u}_0 )
= {\mathbf{ u}}^{\varepsilon } (\cdot,
\mathbf{u}_0 )
\ \text{ in } C([0, 2]; L^2(\R^2)).
$$
Repeating this process, by a diagonal method,
we find that there exist a
subsequence
of 
$\{\delta_{n}\}_{n=1}^\infty$
(which is not  relabeled), 
and a set $\Omega_0$ of full probability measure
 such that, for every $m\in \mathbb{N}$ and
 $\omega \in \Omega_0$, 
 \be\label{pro3a p0}
 \lim_{n\to \infty}
 {\mathbf{ u}}^{\varepsilon, \delta_{n}} (\cdot,
\mathbf{u}_0, \omega )
= {\mathbf{ u}}^{\varepsilon } (\cdot,
\mathbf{u}_0, \omega )
\ \text{ in } C([0, m]; L^2(\R^2)).
\ee
By \eqref{pro3a p0} we infer that
for every
$t\ge 0$ and 
$\omega \in \Omega_0$, 
 \be\label{pro3a p0a}
 \lim_{n\to \infty}
 {\mathbf{ u}}^{\varepsilon, \delta_{n}} (t,
\mathbf{u}_0, \omega )
= {\mathbf{ u}}^{\varepsilon } (t,
\mathbf{u}_0, \omega )
\ \text{ in }  L^2(\R^2).
\ee

Given $j\in \mathbb{N}$, let $\mathcal{O}_j
=\{ x\in \mathbb{R}^2: |x|>j\}$.
By \eqref{pro3a p0a} we get, 
 for every $j\in \mathbb{N}$, 
$t\ge 0$ and 
$\omega \in \Omega_0$, 
 \be\label{pro3a p1}
 \lim_{n\to \infty}
 {\mathbf{ u}}^{\varepsilon, \delta_n} (t,
\mathbf{u}_0, \omega )
= {\mathbf{ u}}^{\varepsilon } (t,
\mathbf{u}_0, \omega )
\ \text{ in }  L^2(\mathcal{O}_j).
\ee
 Given  $\epsilon'>0$
  and
 $\mathbf{u}_0
 \in H^1
 (\mathbb{R}^2  )$,
 by Lemma \ref{tail_3} we infer that
 there exists
 $J=J(\epsilon',
    \mathbf{u}_0) \ge 1$
    such that for all 
    $j\ge J$, $t\ge 0$,
  $\varepsilon\in [0,1]$
  and $\delta\in (0,1]$,
   the solution 
 ${\mathbf{ u}}^{\varepsilon, \delta} (t,
\mathbf{u}_0 )$
of \eqref{1.3*} satisfies:
\be\label{pro3a p2}
 \mathrm{E}
\left ( \| {\mathbf{ u}^{\varepsilon, \delta}} (t,
\mathbf{u}_0  )\|^2_{H^1(\mathcal{O}_j)}
\right )  <\epsilon'.
\ee

 Define a map $\varphi:
L^2(\mathcal{O}_j) \to \R\bigcup \{+\infty\}$
by:
\be\label{pro3a p3}
\varphi (v) =
\left \{
\begin{array}{ll}
  \|v\|^2_{H^1(\mathcal{O}_j)}, &
\ \text{ if } \ v \in H^1(\mathcal{O}_j);\\
  +\infty, &
\ \text{ if } \ v \in L^2(\mathcal{O}_j)
\setminus H^1(\mathcal{O}_j).
\end{array}
\right.
\ee
Then $\varphi:
L^2(\mathcal{O}_j) \to \R\bigcup \{+\infty\}$
is lower semicontinuous.
It follows from Fatou's lemma and
\eqref{pro3a p1} that
$$
\mathrm{E}
\left (  
\varphi (
  {\mathbf{ u}^{\varepsilon}} (t,
\mathbf{u}_0  ) )
\right )
\le 
 \mathrm{E}
\left ( \liminf_{n\to \infty}
\varphi (
  {\mathbf{ u}^{\varepsilon, \delta_n}} (t,
\mathbf{u}_0  ) )
\right )
$$
$$
\le
 \liminf_{n\to \infty}
 \mathrm{E}
\left (
\varphi (
  {\mathbf{ u}^{\varepsilon, \delta_n}} (t,
\mathbf{u}_0  ) )
\right )
=
\liminf_{n\to \infty}
 \mathrm{E}
\left ( \|
  {\mathbf{ u}^{\varepsilon, \delta_n}} (t,
\mathbf{u}_0  ) \|^2_{H^1(\mathcal{O}_j)}
\right ),
$$
which along with \eqref{pro3a p2}
implies that
 for all 
    $j\ge J$, $t\ge 0$ and
  $\varepsilon\in [0,1]$,
\be\label{pro3a p4}
 \mathrm{E}
\left (  
\varphi (
  {\mathbf{ u}^{\varepsilon}} (t,
\mathbf{u}_0  ) )
\right )
 \le \epsilon'.
\ee
By \eqref{pro3a p3}-\eqref{pro3a p4} we get
for all 
    $j\ge J$, $t\ge 0$ and
  $\varepsilon\in [0,1]$, 
\begin{align*}
 \mathrm{E}
\left (  \| 
  {\mathbf{ u}^{\varepsilon}} (t,
\mathbf{u}_0  )  \|^2_{H^1(\mathcal{O}_j)}
\right )
 \le \epsilon',
\end{align*}
as desired.
\end{proof}

Similarly,  by Corollary \ref{tail_3a}
and the argument of Proposition
\ref{pro3a}, we obtain the following
uniform tail-ends estimates of solutions
of \eqref{1.3}.

\begin{proposition}\label{pro3}
 If \eqref{1.1} holds, 
then for every $\epsilon'>0$
and $R>0$, there exist
$T=T(\epsilon', R)>0$
and
  $J=J(\epsilon') \ge 1$
 such that
 for all
 $t\ge T$, 
 $j\ge J$ and  $\varepsilon\in [0,1]$,
 the solution 
$ \mathbf{ u}^{\varepsilon} (t,
\mathbf{u}_0 )$
 of
 \eqref{1.3} with
 $\| \mathbf{u}_0\| _{H^1(\mathbb{R}^2)}\leq R$
 satisfies:  
$$
 \mathrm{E}
\left (
 \int_{|x|>j}
 \left ( |
 {\mathbf{ u}^{\varepsilon}} (t,
\mathbf{u}_0 )(x) |^2
+
|\nabla
 {\mathbf{ u}^{\varepsilon}} (t,
\mathbf{u}_0 )(x) |^2
\right )
dx
\right )  <\epsilon'.
$$
 \end{proposition}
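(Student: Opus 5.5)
The plan follows the proof of Proposition \ref{pro3a} almost verbatim. The only change is to replace Lemma \ref{tail_3} by Corollary \ref{tail_3a}, and to check that the constants $T$ and $J$ can be chosen uniformly over the ball $\{\|\mathbf{u}_0\|_{H^1(\R^2)}\le R\}$ rather than for a single initial datum.

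Fix $\epsilon'>0$ and $R>0$, and let $T=T(\epsilon',R)$ and $J=J(\epsilon')$ be given by Corollary \ref{tail_3a}. Then fix $\mathbf{u}_0$ with $\|\mathbf{u}_0\|_{H^1(\R^2)}\le R$ and fix $\varepsilon\in[0,1]$.

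First, exactly as in the proof of Proposition \ref{pro3a}, I would apply Proposition \ref{pro3.1} on the intervals $[0,m]$ for $m\in\mathbb{N}$ and use a diagonal argument. This produces a sequence $\delta_n\to0$, depending on $\mathbf{u}_0$ and $\varepsilon$, and a full-measure set $\Omega_0$ such that for every $t\ge0$ and $\omega\in\Omega_0$,
$$\mathbf{u}^{\varepsilon,\delta_n}(t,\mathbf{u}_0,\omega)\to\mathbf{u}^{\varepsilon}(t,\mathbf{u}_0,\omega)\quad\text{in } L^2(\R^2).$$
The dependence of the sequence on $\mathbf{u}_0$ is harmless, because the bound we are about to pass to the limit is uniform in $\delta$. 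Restricting to $\mathcal{O}_j=\{|x|>j\}$, the convergence also holds in $L^2(\mathcal{O}_j)$.

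Next, I would use the lower semicontinuous functional $\varphi$ defined in \eqref{pro3a p3} on $L^2(\mathcal{O}_j)$, which equals $\|v\|^2_{H^1(\mathcal{O}_j)}$ on $H^1(\mathcal{O}_j)$ and $+\infty$ otherwise. Fatou's lemma then gives, for $t\ge T$ and $j\ge J$,
$$\mathrm{E}\big(\varphi(\mathbf{u}^\varepsilon(t,\mathbf{u}_0))\big)\le\liminf_{n\to\infty}\mathrm{E}\,\|\mathbf{u}^{\varepsilon,\delta_n}(t,\mathbf{u}_0)\|^2_{H^1(\mathcal{O}_j)}\le\epsilon'.$$
The last inequality is Corollary \ref{tail_3a}, which holds uniformly in $\delta\in(0,1]$, in $\varepsilon$, and in $\mathbf{u}_0$ in the ball. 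Since $\varphi(\mathbf{u}^\varepsilon(t))$ is almost surely finite and equals the $H^1(\mathcal{O}_j)$-norm squared, this is the claimed bound. Because $T$ and $J$ never depended on the particular $\mathbf{u}_0$ or on $\varepsilon$, the conclusion is uniform.

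The only point needing care is the same as in Proposition \ref{pro3a}: Corollary \ref{tail_3a} must hold for every $t\ge T$ with constants independent of $\delta$. That requires the right-hand side of \eqref{tail_4 a}, combined with \eqref{wanc3} and Lemma \ref{ues1a}, to be bounded by $e^{-t}c(R)+c/j+c\sum_k(\|\phi_j\mathbf{f}_k\|^2_{L^2(\R^2)}+\|\phi_j\nabla\mathbf{f}_k\|^2_{L^2(\R^2)})$. This bound has to be checked with the initial-data contributions decaying like $e^{-t}$, rather than appearing as the $\|\phi_j\mathbf{u}_0\|^2$ terms used in Lemma \ref{tail_3}. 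We are assuming that corollary as stated, so no further obstacle arises.
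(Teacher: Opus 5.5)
Your proposal is correct and matches the paper's proof: the paper also combines Corollary \ref{tail_3a} with the diagonal-subsequence, lower-semicontinuity and Fatou argument from the proof of Proposition \ref{pro3a}, and your remark that the $\mathbf{u}_0$-dependence of $\delta_n$ is harmless (since the corollary's bound is uniform in $\delta$, $\varepsilon$ and the $R$-ball) is the only point to check. One small fix: Fatou only gives $\le\epsilon'$, so apply the corollary with $\epsilon'/2$ to obtain the strict inequality in the statement.
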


\section{Proof of main result}
In this section, 
we prove the main result
of the paper on the stability
of invariant measures of \eqref{1.3}
as the noise intensity approaches zero.
We first prove 
the tightness of the union  $\bigcup_{\varepsilon\in [0,1]}\mathcal{I}^\varepsilon$ where
$\mathcal{I}^\varepsilon$ is the set
of all invariant measures of \eqref{1.3}
in $H^1(\R^2)$ corresponding to $\varepsilon$.

\begin{lemma}\label{tigu1}
If \eqref{1.1} holds, then
  $\bigcup_{\varepsilon\in [0,1]}\mathcal{I}^\varepsilon$ is tight in $H^1(\mathbb{R}^2)$.
\end{lemma}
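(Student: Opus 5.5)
To prove Lemma \ref{tigu1}, I will show that, uniformly in $\mu\in\bigcup_{\varepsilon\in[0,1]}\mathcal{I}^\varepsilon$, every invariant measure gives small mass outside a compact set of $H^1(\mathbb{R}^2)$. The plan has three steps: a uniform moment bound, a uniform tail bound, and a local $H^2$ bound; these are then combined into an explicit compact set.

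\textbf{Step 1 (uniform moment bounds).} Fix $\mu\in\mathcal{I}^\varepsilon$. Passing Lemma \ref{ues1a} to the limit $\delta\to0$ as in Proposition \ref{pro3a} (Fatou plus lower semicontinuity, via Proposition \ref{pro3.1}) gives, for the solution $\mathbf{u}^\varepsilon$ of \eqref{1.3}, the bound
\[
\mathrm{E}\|\mathbf{u}^\varepsilon(t,\mathbf{u}_0)\|^2_{H^1}\le M_3+M_3e^{-t}\|\mathbf{u}_0\|^2_{H^1}.
\]
It also gives the time-averaged bound
\[
\frac1t\int_0^t\mathrm{E}\|\mathbf{u}^\varepsilon(s,\mathbf{u}_0)\|^2_{H^2}ds\le M_3+\frac{M_3}{t}\|\mathbf{u}_0\|^2_{H^1}.
\]
Set $\mathcal{B}_R=\{\|v\|_{H^1}\le R\}$. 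By invariance, for every $t$,
\[
\int \min\{\|v\|_{H^2}^2,N\}\,\mu(dv)=\frac1t\int_0^t\int\mathrm{E}\min\{\|\mathbf{u}^\varepsilon(s,v)\|^2_{H^2},N\}\,\mu(dv)\,ds .
\]
A priori $\mu$ need not have finite second moment, so I first prove a uniform bound on $\mu(H^1\setminus\mathcal{B}_R)$. For this, apply invariance to the bounded function $\min\{\|v\|^2_{H^1},N\}$, split the initial data into $\|\mathbf{u}_0\|_{H^1}\le R_0$ and the rest, let $t\to\infty$, then $N\to\infty$. The outcome is $\int\|v\|^2_{H^1}\,d\mu\le M_3$ and $\int\|v\|^2_{H^2}\,d\mu\le M_3$ (the latter by letting $t\to\infty$ in the averaged bound), uniformly in $\varepsilon$. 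Chebyshev then gives $\mu(\|v\|_{H^2}>R)\le M_3/R^2$.

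\textbf{Step 2 (uniform tail bound).} By Proposition \ref{pro3}, for each $\epsilon'$ and $R$ there exist $T$ and $J$ such that the $H^1(\mathcal{O}_j)$-tail expectation is below $\epsilon'$ for $t\ge T$, $j\ge J$, and $\|\mathbf{u}_0\|_{H^1}\le R$. Use invariance with the bounded functional $\min\{\|v\|^2_{H^1(\mathcal{O}_j)},1\}$: integrate over $\mathcal{B}_R$ using Proposition \ref{pro3}, and bound the complement by $\mu(H^1\setminus\mathcal{B}_R)\le M_3/R^2$. This yields
\[
\int\min\{\|v\|^2_{H^1(|x|>j)},1\}\,\mu(dv)<2\epsilon'
\]
uniformly in $\mu$ and $\varepsilon$. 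Choose $j_k$ with bound $\epsilon 4^{-k}$; Chebyshev then gives $\mu(\|v\|^2_{H^1(|x|>j_k)}>2^{-k})\le \epsilon 2^{-k}$.

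\textbf{Step 3 (compact set).} Define
\[
K=\{v:\|v\|_{H^2}\le R,\ \|v\|^2_{H^1(|x|>j_k)}\le 2^{-k}\ \forall k\}.
\]
This set is compact in $H^1(\mathbb{R}^2)$. It is closed, by lower semicontinuity of norms under weak $H^2$ limits. It is precompact because $H^2(B_{j_k})\hookrightarrow H^1(B_{j_k})$ is compact, and the uniform tails then yield finite $\beta$-nets, exactly as in the proof of Lemma \ref{tight1}. Steps 1 and 2 give $\mu(H^1\setminus K)\le M_3/R^2+\epsilon$ for all $\mu\in\bigcup_\varepsilon\mathcal{I}^\varepsilon$, which proves tightness.

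\textbf{Main obstacle.} The delicate step is Step 1. The moment bound on an arbitrary invariant measure must be justified without assuming finite moments; the truncation plus the $t\to\infty$ argument above is my first attempt. One must also handle the $H^2$ bound: it holds only time-averaged and only in the $\delta\to0$ limit. This needs lower semicontinuity of $\int_0^t\|\cdot\|^2_{H^2}$ along the $L^2(0,t;H^1)$ convergence of Proposition \ref{pro3.1}, after which Fubini converts it into a bound under $\mu$.
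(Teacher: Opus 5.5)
Your proposal is correct and takes essentially the same route as the paper. It uses the time-averaged $H^2$ bound from Lemma \ref{ues1a} at $\delta=0$, which you justify via Proposition \ref{pro3.1} and lower semicontinuity, together with the large-time tail estimates of Proposition \ref{pro3}; invariance (with truncation and $t\to\infty$) transfers both bounds to an arbitrary invariant measure uniformly in $\varepsilon$, and the compact set is an $H^2$-ball intersected with uniform $H^1$-tail constraints, which are exactly the steps the paper delegates to the argument of \cite[Proposition 4.1]{Q1} and that your Steps 1--3 make explicit.
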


\begin{proof} Note that 
 Lemma \ref{ues1a}
 is also valid for $\delta =0$, and hence
  for every
$ \mathbf{u}_0 \in H^1(\mathbb{R}^2)$,
there exists $T_1=T_1(\mathbf{u}_0)>0$
such that for all $t\ge T_1$
and $\varepsilon\in [0,1]$,
the solution 
$\mathbf{u}^\varepsilon
 (t, \mathbf{u}_0)$
 of \eqref{1.3} satisfies:
 \be\label{tigu1 p1}
{\frac 1t}
\int_0^t
\mathrm{E}
\left (\|\mathbf{u}^\varepsilon  (s, \mathbf{u}_0) \|^2_{H^2(\mathbb{R}^2)}
\right ) ds \le c_1  ,
\ee
where $c_1>0$ is a constant  depending only
  on $\{\mathbf{f}_k\}_{k=1}^\infty$
  but not on $\mathbf{u}_0$  or
  $\varepsilon$.
  
  Then using \eqref{tigu1 p1}
  and the uniform tail-ends estimates
  given by 
   Proposition \ref{pro3},
   we can prove 
   the tightness of
   $\bigcup_{\varepsilon\in [0,1]}\mathcal{I}^\varepsilon$  in $H^1(\mathbb{R}^2)$
   by the argument of 
   \cite[Proposition 4.1]{Q1}.
   Since the details are similar and thus not repeated
   here.
\end{proof}

We next prove the
uniform  convergence in probability in $L^2(\mathbb{R}^2)$ of solutions
of \eqref{1.3}. Denote by $\mathbf{u}^{\varepsilon_0}(t, \mathbf{u}_0)$ 
the solution of
 \eqref{1.3} with $\varepsilon=\varepsilon_0$ starting from initial data $\mathbf{u}_0$.
\begin{lemma}\label{ucsp}
If \eqref{1.1}
holds, then for
every $L>0$,
$T>0$,
  $\eta>0$
  and $\varepsilon, \varepsilon_0\in [0,1]$,
  the solutions $\mathbf{u}^\varepsilon(t, \mathbf{u}_0)$ and $\mathbf{u}^{\varepsilon_0}(t, \mathbf{u}_0)$ of \eqref{1.3} satisfy:
\be\label{ucsp 1}\lim_{\varepsilon\rightarrow \varepsilon_0}\
\sup_{\|\mathbf{u}_0
\|_{H^1(\mathbb{R}^2)}
\le L}
\
\mathrm{P}\left(\sup_{t\in [0,T]}\|\mathbf{u}^\varepsilon(t, \mathbf{u}_0)-\mathbf{u}^{\varepsilon_0}(t, \mathbf{u}_0)\|_{L^2(\mathbb{R}^2) }\geq \eta\right)=0.
\ee
\end{lemma}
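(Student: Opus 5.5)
We prove \eqref{ucsp 1} by a stopping-time argument applied to the $L^2$ energy of the difference. This avoids working in $H^1$, where the term $\mathbf{u}\times\Delta\mathbf{u}$ cannot be controlled.

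Fix $L,T,\eta$ and $\mathbf{u}_0$ with $\|\mathbf{u}_0\|_{H^1}\le L$. Write $\mathbf{u}=\mathbf{u}^\varepsilon(\cdot,\mathbf{u}_0)$, $\mathbf{v}=\mathbf{u}^{\varepsilon_0}(\cdot,\mathbf{u}_0)$ and $\mathbf{w}=\mathbf{u}-\mathbf{v}$, so that $\mathbf{w}(0)=0$. For $N>0$ introduce the stopping time
$$\tau_N=\inf\Big\{t\ge0:\ \|\mathbf{u}(t)\|_{H^1}^2+\|\mathbf{v}(t)\|_{H^1}^2+\int_0^t\big(\|\mathbf{u}\|_{H^2}^2+\|\mathbf{v}\|_{H^2}^2\big)ds>N\Big\}\wedge T.$$
By the moment bounds \eqref{wang2}, which hold uniformly in $\varepsilon\in[0,1]$ and uniformly over $\|\mathbf{u}_0\|_{H^1}\le L$ (the constants depend only on $L,T$, as in Proposition~\ref{pro2} with $\delta=0$), Chebyshev's inequality gives $\mathrm{P}(\tau_N<T)\le C(L,T)/N$ uniformly in $\varepsilon$ and $\mathbf{u}_0$.

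Next we apply It\^o's formula to $\|\mathbf{w}\|_{L^2}^2$. This is legitimate because the equation for $\mathbf{w}$ holds in $H^{-1}$, $\mathbf{w}\in L^2(0,T;H^1)$, and the drift lies in $L^2(0,T;H^{-1})$ (indeed $\mathbf{u}\times\Delta\mathbf{u}\in L^1(0,T;L^2)$ and it pairs well against $L^\infty(0,T;L^2)$). We then estimate the terms one by one.
\begin{itemize}
\item The Laplacian contributes $-2\|\nabla\mathbf{w}\|^2$.
\item For the cross-product term we use
$$\mathbf{u}\times\Delta\mathbf{u}-\mathbf{v}\times\Delta\mathbf{v}=\mathbf{w}\times\Delta\mathbf{u}+\mathbf{v}\times\Delta\mathbf{w}.$$
The first part is orthogonal to $\mathbf{w}$. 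For the second, integration by parts gives
$$(\mathbf{v}\times\Delta\mathbf{w},\mathbf{w})=-(\nabla\mathbf{v}\times\nabla\mathbf{w},\mathbf{w})\le\|\nabla\mathbf{v}\|_{L^4}\|\nabla\mathbf{w}\|\|\mathbf{w}\|_{L^4}.$$
Ladyzhenskaya's inequality bounds $\|\mathbf{w}\|_{L^4}\le C\|\mathbf{w}\|^{1/2}\|\nabla\mathbf{w}\|^{1/2}$, and Young's inequality then yields $\tfrac12\|\nabla\mathbf{w}\|^2+C\|\nabla\mathbf{v}\|_{L^4}^4\|\mathbf{w}\|^2$, where $\|\nabla\mathbf{v}\|_{L^4}^4\le C\|\mathbf{v}\|_{H^1}^2\|\mathbf{v}\|_{H^2}^2$.
\item The cubic term is monotone up to a bounded part: $-(|\mathbf{u}|^2\mathbf{u}-|\mathbf{v}|^2\mathbf{v},\mathbf{w})\le0$.
\item The $\varepsilon^2$ correction and the noise are the sources of the difference. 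We split the diffusion coefficients as
$$\varepsilon(\mathbf{u}\times\mathbf{f}_k+\mathbf{f}_k)-\varepsilon_0(\mathbf{v}\times\mathbf{f}_k+\mathbf{f}_k)=\varepsilon\,\mathbf{w}\times\mathbf{f}_k+(\varepsilon-\varepsilon_0)(\mathbf{v}\times\mathbf{f}_k+\mathbf{f}_k),$$
and treat the drift correction the same way.
\end{itemize}
Collecting these, on $[0,\tau_N]$ we obtain
$$d\|\mathbf{w}\|^2\le C\big(1+\|\mathbf{v}\|_{H^1}^2\|\mathbf{v}\|_{H^2}^2\big)\|\mathbf{w}\|^2dt+C|\varepsilon-\varepsilon_0|\big(1+\|\mathbf{v}\|_{H^1}^2\big)dt+dM_t,$$
where $C$ depends on $\sum_k\|\mathbf{f}_k\|_{W^{1,\infty}\cap H^1}$. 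The cross term $(\varepsilon^2-\varepsilon_0^2)$ is bounded by $2|\varepsilon-\varepsilon_0|$, and the martingale $M_t$ has coefficients of the form $\varepsilon_0$-independent terms times $(\mathbf{w},\mathbf{f}_k)$, plus $(\varepsilon-\varepsilon_0)$-terms.

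We then run Gronwall's lemma for the stopped process. Because the Gronwall coefficient $\|\mathbf{v}\|_{H^1}^2\|\mathbf{v}\|_{H^2}^2$ is random, we apply It\^o's formula to $e^{-\int_0^t C(1+\|\mathbf{v}\|_{H^1}^2\|\mathbf{v}\|_{H^2}^2)ds}\|\mathbf{w}(t)\|^2$; on $[0,\tau_N]$ this exponential weight is bounded below by $e^{-C(T+N^2)}$. Taking $\sup_{t\le\tau_N}$ and expectations, and handling the martingale with the Burkholder–Davis–Gundy inequality, gives
$$\mathrm{E}\sup_{t\le\tau_N}\|\mathbf{w}(t)\|^2\le C(N,L,T)\,|\varepsilon-\varepsilon_0|.$$
Finally,
$$\mathrm{P}\Big(\sup_{t\le T}\|\mathbf{w}\|\ge\eta\Big)\le\frac{C(L,T)}{N}+\frac{C(N,L,T)\,|\varepsilon-\varepsilon_0|}{\eta^2},$$
uniformly over $\|\mathbf{u}_0\|_{H^1}\le L$. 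Letting first $\varepsilon\to\varepsilon_0$ and then $N\to\infty$ proves \eqref{ucsp 1}.

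The main obstacle is controlling the term $(\nabla\mathbf{v}\times\nabla\mathbf{w},\mathbf{w})$ with only $L^2(0,T;H^2)$ regularity of $\mathbf{v}$. We handle it with the 2D Ladyzhenskaya inequality together with the localization by $\tau_N$. We also need to justify It\^o's formula for the $L^2$ norm of the difference. This is fine, since the $L^2$ energy, unlike the $H^1$ energy, only requires $\mathbf{u}\times\Delta\mathbf{u}\in L^1(0,T;L^2)$ paired against $\mathbf{w}\in L^\infty(0,T;L^2)$. If needed, it can be obtained by mollification, or by the pathwise uniqueness computation of \cite{Q1}, which uses the same estimate.
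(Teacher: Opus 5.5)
Your proposal is correct and follows essentially the same route as the paper's proof. Both use stopping times built from the uniform $L^\infty(0,T;H^1)\cap L^2(0,T;H^2)$ moment bounds, It\^o's formula for $\|\mathbf{w}\|_{L^2}^2$ with the splitting $\mathbf{w}\times\Delta\mathbf{u}+\mathbf{v}\times\Delta\mathbf{w}$, integration by parts with Ladyzhenskaya and Young, monotonicity of the cubic term, Gronwall, BDG, and a final Chebyshev split on $\{\tau<T\}$. Your exponential integrating factor in place of the paper's pathwise Gronwall, and the resulting rate $|\varepsilon-\varepsilon_0|$ instead of $|\varepsilon-\varepsilon_0|^2$, are only cosmetic differences.

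One correction to your description of $M_t$: since $(\mathbf{w}\times\mathbf{f}_k,\mathbf{w})_{L^2(\mathbb{R}^2)}=0$, the entire martingale is $2(\varepsilon-\varepsilon_0)\sum_k(\mathbf{v}\times\mathbf{f}_k+\mathbf{f}_k,\mathbf{w})_{L^2(\mathbb{R}^2)}\,dW_k$. It has no $O(1)$ part linear in $\mathbf{w}$, and this is exactly what makes your BDG bound $O(|\varepsilon-\varepsilon_0|)$.
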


\begin{proof}
By Lemma 2.5 in \cite{Q1} we know that
there exists $c_1=c_1(L, T)>0$ such that
for all
$\varepsilon \in [0,1]$ and $
 \mathbf{u}_0 \in {H^1(\R^2)}$ with
 $
\|\mathbf{u}_0\|_{H^1(\R^2)}
\le L$, 
$$
  \mathrm{E}\left (
    \sup_{0\le t\le T}
    \|\mathbf{u}^{\varepsilon}
     (t, \mathbf{u}_{0})
    \|_{H^1(\mathbb{R}^2)} ^2
     +\int_0^{T}
    \| \mathbf{u}^{\varepsilon} (s,
  \mathbf{u}_{0})\|_{H^2(\mathbb{R}^2)}^2 ds\right)
 \le c_1,
  $$
  and hence by
   Chebyshev's inequality, we
  find that
  for a given $\epsilon'>0$, there exists
  $R=R(\epsilon', L, T)>0$ such that
  for all
 $\varepsilon  \in [0,1]$
 and
 $\mathbf{u}_0
 \in H^1(\mathbb{R}^2)$
 with
 $\|\mathbf{u}_0\|_{
   H^1(\mathbb{R}^2)} \le L$, 
\be\label{ucsp p0}
  \mathrm{P}
     \left (
    \sup_{0\le t\le T}
    \|\mathbf{u}^\varepsilon
     (t, \mathbf{u}_{0})
    \|_{H^1(\mathbb{R}^2)} ^2
     +\int_0^{T}
    \| \mathbf{u}^\varepsilon (s,
  \mathbf{u}_{0})\|_{H^2(\mathbb{R}^2)}^2 ds
    >R
     \right )
     \le {\frac 12} \epsilon'.
\ee
   Define the stopping
    times by
    $$
    \tau_{1,\varepsilon}
  =\inf\left\{t \ge 0:  \|\mathbf{u}
  ^\varepsilon  (t,
  \mathbf{u}_{0})
   \|^2_{H^1(\mathbb{R}^2) } +
   \int_{0}^{t}\| \mathbf{u}^\varepsilon
    (s,
  \mathbf{u}_{0})\|_{H^2(\mathbb{R}^2)}^2 ds
  >R   \right\},$$
and
 $$
    \tau_{1, \varepsilon_0}
  =\inf\left\{t \ge 0:  \|\mathbf{u}^
  {\varepsilon_0}  (t,
  \mathbf{u}_{0})
   \|^2_{H^1(\mathbb{R}^2) } +
   \int_{0}^{t}\| \mathbf{u}^{\varepsilon_0} (s,
  \mathbf{u}_{0})\|_{H^2(\mathbb{R}^2)}^2 ds
  >R     \right\}.$$
  As usual, $\inf \emptyset =+\infty$.
  Let
  $\tau_\varepsilon =
  \tau_{1, \varepsilon}
  \wedge \tau_{1, \varepsilon_0}$.
  It follows from \eqref{ucsp p0}  that
  \be\label{ucsp p0a}
  \mathrm{P}
  (\tau_\varepsilon  <T)
  \le \epsilon'.
  \ee
  
Note that 
for all $t\in [0, T]$, $\mathrm{P}$-almost surely,
$$
d
(\mathbf{u}^{\varepsilon}(t)
-\mathbf{u}^{\varepsilon_0}(t)
)
=\Delta (\mathbf{u}^{\varepsilon}(t)
-\mathbf{u}^{\varepsilon_0} (t)) dt
+ (\mathbf{u}^{\varepsilon}(t)
-\mathbf{u}^{\varepsilon_0}(t)
) \times \Delta  \mathbf{u}^{\varepsilon}(t)
 dt
 $$
 $$
 +
 \mathbf{u}^{\varepsilon_0}
 \times
 \Delta (\mathbf{u}^{\varepsilon}(t)
-\mathbf{u}^{\varepsilon_0}(t)
) dt
-(\mathbf{u}^{\varepsilon}(t)
-\mathbf{u}^{\varepsilon_0}(t)
) dt
-\left (
|\mathbf{u}^{\varepsilon}(t)|^2
 \mathbf{u}^{\varepsilon}(t)
 -
 |\mathbf{u}^{\varepsilon_0}(t)|^2
 \mathbf{u}^{\varepsilon_0}(t)
\right ) dt
 $$
$$
 {+\frac{
 \varepsilon^2
 -\varepsilon^2_0}{2}
 \sum_{k=1}^\infty
 (\mathbf{u}^{\varepsilon}(t)
 \times
 \mathbf{f}_k)\times
 \mathbf{f}_k
   dt
  +\frac{\varepsilon^2_0}{ 2}
  \sum_{k=1}^\infty
 ((\mathbf{u}^{\varepsilon}(t)
-\mathbf{u}^{\varepsilon_0}(t)
)\times  \mathbf{f}_k)\times \mathbf{f}_k
dt}
$$
   \be\label{ucsp p4}
 +
 (\varepsilon
 -\varepsilon_0)
 \sum_{k=1}^\infty
 (\mathbf{u}^{\varepsilon}(t)
 \times
 \mathbf{f}_k
 +
 \mathbf{f}_k
  )dW_k
  +
  \varepsilon_0
  \sum_{k=1}^\infty \left (
  (\mathbf{u}^{\varepsilon}(t)
-\mathbf{u}^{\varepsilon_0}(t)
)\times  \mathbf{f}_k
\right )  dW_k.
\ee
By \eqref{ucsp p4} and
It\^{o}'s formula we get
for all $t\in [0, T]$, $\mathrm{P}$-almost surely,
  $$
  d
\|\mathbf{u}^{\varepsilon}(t)
-\mathbf{u}^{\varepsilon_0}(t)
\|^2_{L^2(\mathbb{R}^2)}
+2\|\mathbf{u}^{\varepsilon}(t)
-\mathbf{u}^{\varepsilon_0} (t)\|^2_{H^1(\mathbb{R}^2)}
  $$
$$
\le 2 (\mathbf{u}^{\varepsilon_0}(t)
 \times
 \Delta (\mathbf{u}^{\varepsilon}(t)
-\mathbf{u}^{\varepsilon_0}(t))
, \mathbf{u}^{\varepsilon}(t)
-\mathbf{u}^{\varepsilon_0}(t))_{L^2(\mathbb{R}^2)} dt
$$
$$-2\left (
|\mathbf{u}^{\varepsilon}(t)|^2
 \mathbf{u}^{\varepsilon}(t)
 -
 |\mathbf{u}^{\varepsilon_0}(t)|^2
 \mathbf{u}^{\varepsilon_0}(t), \mathbf{u}^{\varepsilon}(t)
-\mathbf{u}^{\varepsilon_0}(t)\right)_{L^2(\mathbb{R}^2)}
 dt
$$
$$
 +
{(
 \varepsilon^2
 -\varepsilon^2_0)
 \sum_{k=1}^\infty
 ((\mathbf{u}^{\varepsilon}(t)
 \times
 \mathbf{f}_k)\times
 \mathbf{f}_k
   , \mathbf{u}^{\varepsilon}(t)
-\mathbf{u}^{\varepsilon_0}(t))_{L^2(\mathbb{R}^2)}  dt}
$$
$$
+ \varepsilon^2_0
  \sum_{k=1}^\infty
 (((\mathbf{u}^{\varepsilon}(t)
-\mathbf{u}^{\varepsilon_0}(t)
)\times  \mathbf{f}_k)\times \mathbf{f}_k, \mathbf{u}^{\varepsilon}(t)
-\mathbf{u}^{\varepsilon_0}(t))_{L^2(\mathbb{R}^2)}  dt
$$
$$
 +
 2(\varepsilon
 -\varepsilon_0)
 \sum_{k=1}^\infty
 ((\mathbf{u}^{\varepsilon}(t)
 \times
 \mathbf{f}_k
 +
 \mathbf{f}_k)dW_k, \mathbf{u}^{\varepsilon}(t)
-\mathbf{u}^{\varepsilon_0}(t))_{L^2(\mathbb{R}^2)}
$$
\be\label{up1}
+ 2(\varepsilon
 -\varepsilon_0)^2
 \sum_{k=1}^\infty
 \|\mathbf{u}^{\varepsilon}(t)
 \times
 \mathbf{f}_k
 +
 \mathbf{f}_k\|^2_{L^2(\mathbb{R}^2)}
 dt+  2\varepsilon^2_0
  \sum_{k=1}^\infty \left \|
  (\mathbf{u}^{\varepsilon}(t)
-\mathbf{u}^{\varepsilon_0}(t)
)\times  \mathbf{f}_k
\right \|^2_{L^2(\mathbb{R}^2)}  dt.
\ee
Using H\"{o}lder's inequality and the interpolation inequality, we see that
$$
2 (\mathbf{u}^{\varepsilon_0}(t)
 \times
 \Delta (\mathbf{u}^{\varepsilon}(t)
-\mathbf{u}^{\varepsilon_0}(t))
, \mathbf{u}^{\varepsilon}(t)
-\mathbf{u}^{\varepsilon_0}(t))_{L^2(\mathbb{R}^2)}
$$
$$
=-2 (\nabla\mathbf{u}^{\varepsilon_0}(t)
 \times
 \nabla (\mathbf{u}^{\varepsilon}(t)
-\mathbf{u}^{\varepsilon_0}(t))
, \mathbf{u}^{\varepsilon}(t)
-\mathbf{u}^{\varepsilon_0}(t))_{L^2(\mathbb{R}^2)}
$$
$$
\leq 2\|\nabla (\mathbf{u}^{\varepsilon}(t)
-\mathbf{u}^{\varepsilon_0}(t))\|_{L^2(\mathbb{R}^2)}\|\mathbf{u}^{\varepsilon}(t)
-\mathbf{u}^{\varepsilon_0}(t)\|_{L^4(\mathbb{R}^2)}\|\nabla\mathbf{u}^{\varepsilon_0}(t)\|_{L^4(\mathbb{R}^2)}
$$
$$
\leq c_2 \|\nabla (\mathbf{u}^{\varepsilon}(t)
-\mathbf{u}^{\varepsilon_0}(t))\|^\frac{3}{2}_{L^2(\mathbb{R}^2)}\|\mathbf{u}^{\varepsilon}(t)
-\mathbf{u}^{\varepsilon_0}(t)\|^\frac{1}{2}_{L^2(\mathbb{R}^2)}\|\nabla\mathbf{u}^{\varepsilon_0}(t)\|^\frac{1}{2}_{L^2(\mathbb{R}^2)}
\|\mathbf{u}^{\varepsilon_0}(t)\|^\frac{1}{2}_{H^2(\mathbb{R}^2)}
$$
\begin{align}\label{5.5}
\leq \|\nabla (\mathbf{u}^{\varepsilon}(t)
-\mathbf{u}^{\varepsilon_0}(t))\|^2_{L^2(\mathbb{R}^2)}+\frac{ 27c_2^4}{ 256}
\|\mathbf{u}^{\varepsilon}(t)
-\mathbf{u}^{\varepsilon_0}(t)\|^2_{L^2(\mathbb{R}^2)}\|\nabla\mathbf{u}^{\varepsilon_0}(t)\|^2_{L^2(\mathbb{R}^2)}
\|\mathbf{u}^{\varepsilon_0}(t)\|^2_{H^2(\mathbb{R}^2)}.
\end{align}
Note that
\begin{align}\label{5.6} -2\left (
|\mathbf{u}^{\varepsilon}(t)|^2
 \mathbf{u}^{\varepsilon}(t)
 -
 |\mathbf{u}^{\varepsilon_0}(t)|^2
 \mathbf{u}^{\varepsilon_0}(t), \mathbf{u}^{\varepsilon}(t)
-\mathbf{u}^{\varepsilon_0}(t)\right)_{L^2(\mathbb{R}^2)}\leq 0.
\end{align}
Moreover, we have
$$
(
 \varepsilon^2
 -\varepsilon^2_0)
 \sum_{k=1}^\infty
 ((\mathbf{u}^{\varepsilon}(t)
 \times
 \mathbf{f}_k)\times
 \mathbf{f}_k
   , \mathbf{u}^{\varepsilon}(t)
-\mathbf{u}^{\varepsilon_0}(t))_{L^2(\mathbb{R}^2)}
$$
$$
\leq  2 |\varepsilon
 -\varepsilon_0|
 \sum_{k=1}^\infty
  \|(\mathbf{u}^{\varepsilon}(t)
 \times
 \mathbf{f}_k)\times
 \mathbf{f}_k\|_{L^2(\mathbb{R}^2)} \|\mathbf{u}^{\varepsilon}(t)
-\mathbf{u}^{\varepsilon_0}(t)\|_{L^2(\mathbb{R}^2)}
$$
$$
\leq  2 | \varepsilon
 -\varepsilon_0|
  \|\mathbf{u}^{\varepsilon}(t)
-\mathbf{u}^{\varepsilon_0}(t)\|_{L^2(\mathbb{R}^2)}\|\mathbf{u}^{\varepsilon}(t)\|_{L^2(\mathbb{R}^2)}
 \sum_{k=1}^\infty \|\mathbf{f}_k\|^2_{L^\infty(\mathbb{R}^2)}
$$
\begin{align}
\leq \|\mathbf{u}^{\varepsilon}(t)
-\mathbf{u}^{\varepsilon_0}(t)\|_{L^2(\mathbb{R}^2)}^2+ {(\varepsilon
 -\varepsilon_0)^2}{}\|\mathbf{u}^{\varepsilon}(t)\|^2_{L^2(\mathbb{R}^2)}
 \left(\sum_{k=1}^\infty \|\mathbf{f}_k\|^2_{L^\infty(\mathbb{R}^2)}\right)^2,
\end{align}
and
$$
\varepsilon^2_0
  \sum_{k=1}^\infty
 (((\mathbf{u}^{\varepsilon}(t)
-\mathbf{u}^{\varepsilon_0}(t)
)\times  \mathbf{f}_k)\times \mathbf{f}_k, \mathbf{u}^{\varepsilon}(t)
-\mathbf{u}^{\varepsilon_0}(t))_{L^2(\mathbb{R}^2)}
$$
\begin{align}
\leq \varepsilon^2_0
\|\mathbf{u}^{\varepsilon}(t)
-\mathbf{u}^{\varepsilon_0}(t)\|^2_{L^2(\mathbb{R}^2)}
 \sum_{k=1}^\infty \|\mathbf{f}_k\|^2_{L^\infty(\mathbb{R}^2)}.
\end{align}
For the corrector terms, we see
$$
2(\varepsilon
 -\varepsilon_0)^2
 \sum_{k=1}^\infty
 \|\mathbf{u}^{\varepsilon}(t)
 \times
 \mathbf{f}_k
 +
 \mathbf{f}_k\|^2_{L^2(\mathbb{R}^2)}
 +2\varepsilon^2_0
  \sum_{k=1}^\infty \left \|
  (\mathbf{u}^{\varepsilon}(t)
-\mathbf{u}^{\varepsilon_0}(t)
)\times  \mathbf{f}_k
\right \|^2_{L^2(\mathbb{R}^2)}
$$
$$
\leq  4 (\varepsilon
 -\varepsilon_0)^2\sum_{k=1}^\infty
 \|\mathbf{u}^{\varepsilon}(t)\|^2_{L^2(\mathbb{R}^2)}
 \|\mathbf{f}_k\|^2_{L^\infty(\mathbb{R}^2)}
 + 4
 (\varepsilon
 -\varepsilon_0)^2\sum_{k=1}^\infty\|\mathbf{f}_k\|^2_{L^2(\mathbb{R}^2)}$$
 \begin{align}\label{up4}+2\varepsilon^2_0 \|
  \mathbf{u}^{\varepsilon}(t)
-\mathbf{u}^{\varepsilon_0}(t)\|^2_{L^2(\mathbb{R}^2)}
  \sum_{k=1}^\infty
\|  \mathbf{f}_k
 \|^2_{L^\infty(\mathbb{R}^2)}.
\end{align}
From \eqref{up1}-\eqref{up4}, we further obtain for all $t\in [0,T]$,
 $\varepsilon, \varepsilon_0 \in [0,1]$, $\mathrm{P}$-almost surely,
$$\sup_{r\in [0,t]}
\left (
\|\mathbf{u}^{\varepsilon}(r\wedge \tau_\varepsilon)
-\mathbf{u}^{\varepsilon_0}(r\wedge \tau_\varepsilon)
\|^2_{L^2(\mathbb{R}^2)}
+\int_{0}^{r\wedge \tau_\varepsilon}\|\mathbf{u}^{\varepsilon}(s)
-\mathbf{u}^{\varepsilon_0} (s)\|^2_{H^1(\mathbb{R}^2)} ds
\right )
$$
$$
\leq \int_{0}^{t\wedge \tau_\varepsilon}\|\mathbf{u}^{\varepsilon}(s)
-\mathbf{u}^{\varepsilon_0}(s)\|^2_{L^2(\mathbb{R}^2)}\left(\frac{ 27c_2^4}{ 256}\|\nabla\mathbf{u}^{\varepsilon_0}(s)\|^2_{L^2(\mathbb{R}^2)}
\|\mathbf{u}^{\varepsilon_0}(s)\|^2_{H^2(\mathbb{R}^2)}+3\varepsilon^2_0
  \sum_{k=1}^\infty
\|  \mathbf{f}_k
 \|^2_{L^\infty(\mathbb{R}^2)}\right) ds
$$
$$
+ |\varepsilon-\varepsilon_0|^2
 \int_{0}^{t\wedge \tau_\varepsilon}
 \|\mathbf{u}^{\varepsilon}(s)\|^2_{L^2(\mathbb{R}^2)}\left(\sum_{k=1}^\infty \|\mathbf{f}_k\|^2_{L^2(\mathbb{R}^2)}\right)^2ds
$$
$$
+ 4|\varepsilon-\varepsilon_0|^2 \int_{0}^{t\wedge \tau_\varepsilon} \left(\|\mathbf{u}^{\varepsilon}(s)\|^2_{L^2(\mathbb{R}^2)}\sum_{k=1}^\infty
 \|\mathbf{f}_k\|^2_{L^\infty(\mathbb{R}^2)}+
\sum_{k=1}^\infty \|\mathbf{f}_k\|^2_{L^2(\mathbb{R}^2)}\right)ds
$$
$$
 +2|\varepsilon
 -\varepsilon_0|
 \sup_{r\in [0,t]}\left|\int_{0}^{r\wedge \tau_\varepsilon}\sum_{k=1}^\infty
 ((\mathbf{u}^{\varepsilon}(s)
 \times
 \mathbf{f}_k
 +
 \mathbf{f}_k)dW_k, \mathbf{u}^{\varepsilon}(s)
-\mathbf{u}^{\varepsilon_0}(s))_{L^2(\mathbb{R}^2)}\right|
$$
\begin{align*}
&\leq \int_{0}^{t}\left(\sup_{r\in [0,s]}\|\mathbf{u}^{\varepsilon}(r\wedge \tau_\varepsilon)
-\mathbf{u}^{\varepsilon_0}(r\wedge \tau_\varepsilon)\|^2_{L^2(\mathbb{R}^2)}\right)1_{(0,\tau_\varepsilon)}(s)
 \nonumber\\
&\qquad\qquad\qquad \times\left(\frac{ 27c_2^4}{ 256}\|\nabla\mathbf{u}^{\varepsilon_0}(s)\|^2_{L^2(\mathbb{R}^2)}
\|\mathbf{u}^{\varepsilon_0}(s)\|^2_{H^2(\mathbb{R}^2)}+3\varepsilon^2_0
  \sum_{k=1}^\infty
\|  \mathbf{f}_k
 \|^2_{L^\infty(\mathbb{R}^2)}\right) ds
\end{align*}
$$
+{|\varepsilon-\varepsilon_0|^2}{}\int_{0}^{t\wedge \tau_\varepsilon}
 \|\mathbf{u}^{\varepsilon}(s)\|^2_{L^2(\mathbb{R}^2)}\left(\sum_{k=1}^\infty \|\mathbf{f}_k\|^2_{L^2(\mathbb{R}^2)}\right)^2ds
$$
$$
+4| \varepsilon-\varepsilon_0|^2\int_{0}^{t\wedge \tau_\varepsilon} \left(\|\mathbf{u}^{\varepsilon}(s)\|^2_{L^2(\mathbb{R}^2)}\sum_{k=1}^\infty
 \|\mathbf{f}_k\|^2_{L^\infty(\mathbb{R}^2)}+
\sum_{k=1}^\infty \|\mathbf{f}_k\|^2_{L^2(\mathbb{R}^2)}\right)ds
$$
\begin{align}\label{up5}
 +2|\varepsilon
 -\varepsilon_0|
 \sup_{r\in [0,t]}\left|\int_{0}^{r\wedge \tau_\varepsilon}\sum_{k=1}^\infty
 ((\mathbf{u}^{\varepsilon}(s)
 \times
 \mathbf{f}_k
 +
 \mathbf{f}_k)dW_k, \mathbf{u}^{\varepsilon}(s)
-\mathbf{u}^{\varepsilon_0}(s))_{L^2(\mathbb{R}^2)}\right|.
\end{align}
Applying
 Gronwall's lemma
 to  \eqref{up5}, we obtain
 for all $t\in [0,T]$,
 $\varepsilon, \varepsilon_0 \in [0,1]$, $\mathrm{P}$-almost surely,
$$\sup_{r\in [0,t]}\|\mathbf{u}^{\varepsilon}(r\wedge \tau_\varepsilon)
-\mathbf{u}^{\varepsilon_0}(r\wedge \tau_\varepsilon)
\|^2_{L^2(\mathbb{R}^2)}
$$
$$
\leq  {|\varepsilon-\varepsilon_0|^2 }{}g(t)\int_{0}^{t\wedge \tau_\varepsilon}
 \|\mathbf{u}^{\varepsilon}(s)\|^2_{L^2(\mathbb{R}^2)}\left(\sum_{k=1}^\infty \|\mathbf{f}_k\|^2_{L^2(\mathbb{R}^2)}\right)^2ds
$$
$$
+ 4 |\varepsilon-\varepsilon_0|^2 g(t)\int_{0}^{t\wedge \tau_\varepsilon} \left(\|\mathbf{u}^{\varepsilon}(s)\|^2_{L^2(\mathbb{R}^2)}\sum_{k=1}^\infty
 \|\mathbf{f}_k\|^2_{L^\infty(\mathbb{R}^2)}+
\sum_{k=1}^\infty \|\mathbf{f}_k\|^2_{L^2(\mathbb{R}^2)}\right)ds
$$
\begin{align}\label{up6}
 +2 |\varepsilon
 -\varepsilon_0| g(t)
 \left(\sup_{r\in [0,t]}\left|\int_{0}^{r\wedge \tau_\varepsilon}\sum_{k=1}^\infty
 ((\mathbf{u}^{\varepsilon}(s)
 \times
 \mathbf{f}_k
 +
 \mathbf{f}_k)dW_k, \mathbf{u}^{\varepsilon}(s)
-\mathbf{u}^{\varepsilon_0}(s))_{L^2(\mathbb{R}^2)}\right|\right),
\end{align}
where $$g(t)=e^{\int_{0}^{t\wedge \tau_\varepsilon}\left(
\frac{ 27c_2^4}{ 256} \|\nabla\mathbf{u}^{\varepsilon_0}(s)\|^2_{L^2(\mathbb{R}^2)}
\|\mathbf{u}^{\varepsilon_0}(s)\|^2_{H^2(\mathbb{R}^2)}+3\varepsilon^2_0
  \sum_{k=1}^\infty
\|  \mathbf{f}_k
 \|^2_{L^\infty(\mathbb{R}^2)}\right) ds}
 $$
 $$
\leq e^{ \frac{ 27c_2^4}{ 256}R^2+3\sum_{k=1}^\infty
\|  \mathbf{f}_k
 \|^2_{L^\infty(\mathbb{R}^2)}T} .
$$
Taking expectation of \eqref{up6}, we have for all $t\in [0,T]$,  $\varepsilon, \varepsilon_0 \in [0,1]$,
$$\mathrm{E}\left(\sup_{r\in [0,t]}\|\mathbf{u}^{\varepsilon}(r\wedge \tau_\varepsilon)
-\mathbf{u}^{\varepsilon_0}(r\wedge \tau_\varepsilon)
\|^2_{L^2(\mathbb{R}^2)}\right)
$$
$$
\leq {|\varepsilon-\varepsilon_0|^2}{}e^{\frac{ 27c_2^4}{ 256} R^2+3\sum_{k=1}^\infty
\|  \mathbf{f}_k
 \|^2_{L^\infty(\mathbb{R}^2)}T}TR\left(\sum_{k=1}^\infty
\|  \mathbf{f}_k
 \|^2_{L^2(\mathbb{R}^2)}\right)^2
$$
$$
+4|\varepsilon-\varepsilon_0|^2
e^{\frac{ 27c_2^4}{ 256} R^2+3\sum_{k=1}^\infty
\|  \mathbf{f}_k
 \|^2_{L^\infty(\mathbb{R}^2)}T}\left(TR\sum_{k=1}^\infty
\|  \mathbf{f}_k
 \|_{L^\infty(\mathbb{R}^2)}^2+T\sum_{k=1}^\infty
\|  \mathbf{f}_k
 \|_{L^2(\mathbb{R}^2)}^2\right)
$$
\begin{align}\label{up7}
 &\qquad\qquad+2|\varepsilon
 -\varepsilon_0|
 e^{\frac{ 27c_2^4}{ 256} R^2+3\sum_{k=1}^\infty
\|  \mathbf{f}_k
 \|^2_{L^\infty(\mathbb{R}^2)}T}\nonumber\\
 &\qquad\qquad\quad\times \mathrm{E}
 \left(\sup_{r\in [0,t]}\left|\int_{0}^{r\wedge \tau_\varepsilon}\sum_{k=1}^\infty
 ((\mathbf{u}^{\varepsilon}(s)
 \times
 \mathbf{f}_k
 +
 \mathbf{f}_k)dW_k, \mathbf{u}^{\varepsilon}(s)
-\mathbf{u}^{\varepsilon_0}(s))_{L^2(\mathbb{R}^2)} \right|\right).
\end{align}
Let $c_3=  e^{\frac{ 27c_2^4}{ 256} R^2+3\sum_{k=1}^\infty
\|  \mathbf{f}_k
 \|^2_{L^\infty(\mathbb{R}^2)}T} $.
Using BDG's inequality, we have
$$
2|\varepsilon
 -\varepsilon_0|c_3
 \mathrm{E}
 \left(\sup_{r\in [0,t]}\left|\int_{0}^{r\wedge \tau_\varepsilon}\sum_{k=1}^\infty
 ((\mathbf{u}^{\varepsilon}(s)
 \times
 \mathbf{f}_k
 +
 \mathbf{f}_k)dW_k, \mathbf{u}^{\varepsilon}(s)
-\mathbf{u}^{\varepsilon_0}(s))_{L^2(\mathbb{R}^2)}\right|\right)
$$
$$
\leq 2c_3c_4 |\varepsilon
 -\varepsilon_0| \mathrm{E}
 \left(\int_{0}^{t\wedge \tau_\varepsilon}\sum_{k=1}^\infty
 ((\mathbf{u}^{\varepsilon}(s)
 \times
 \mathbf{f}_k
 +
 \mathbf{f}_k), \mathbf{u}^{\varepsilon}(s)
-\mathbf{u}^{\varepsilon_0}(s))^2_{L^2(\mathbb{R}^2)} ds\right)^\frac{1}{ 2}
$$
$$
\leq 2c_3c_4 | \varepsilon
 -\varepsilon_0|
 \mathrm{E}
 \left(\int_{0}^{t\wedge \tau_\varepsilon}\sum_{k=1}^\infty
 \|\mathbf{u}^{\varepsilon}(s)
 \times
 \mathbf{f}_k
 +
 \mathbf{f}_k\|^2_{L^2(\mathbb{R}^2)} \|\mathbf{u}^{\varepsilon}(s)
-\mathbf{u}^{\varepsilon_0}(s)\|^2_{L^2(\mathbb{R}^2)} ds\right)^\frac{1}{ 2}
$$
$$
\leq 2\sqrt{2}c_3c_4 | \varepsilon
 -\varepsilon_0| \mathrm{E}
 \Bigg[\left(\sup_{r\in [0,t]}\|\mathbf{u}^{\varepsilon}(r\wedge \tau_\varepsilon)
-\mathbf{u}^{\varepsilon_0}(r\wedge \tau_\varepsilon)
\|_{L^2(\mathbb{R}^2)}\right)$$
$$\times \left(\int_{0}^{t\wedge \tau_\varepsilon}\sum_{k=1}^\infty
 \left(\|\mathbf{u}^{\varepsilon}(s)\|^2_{L^2(\mathbb{R}^2)}
 \|
 \mathbf{f}_k\|^2_{L^\infty(\mathbb{R}^2)}
 +
 \|\mathbf{f}_k\|^2_{L^2(\mathbb{R}^2)}\right) ds\right)^\frac{1}{ 2}\Bigg]
$$
$$
\leq \frac{1}{2}\mathrm{E}
 \left(\sup_{r\in [0,t]}\|\mathbf{u}^{\varepsilon}(r\wedge \tau_\varepsilon)
-\mathbf{u}^{\varepsilon_0}(r\wedge \tau_\varepsilon)
\|^2_{L^2(\mathbb{R}^2)}\right)
$$
$$
+ 4c_3^2c_4^2   |\varepsilon
 -\varepsilon_0|^2\mathrm{E}\left(\int_{0}^{t\wedge \tau_\varepsilon}\sum_{k=1}^\infty
 \left(\|\mathbf{u}^{\varepsilon}(s)\|^2_{L^2(\mathbb{R}^2)}
 \|
 \mathbf{f}_k\|^2_{L^\infty(\mathbb{R}^2)}
 +
 \|\mathbf{f}_k\|^2_{L^2(\mathbb{R}^2)}\right) ds\right)
$$
$$
\leq \frac{1}{2}\mathrm{E}
 \left(\sup_{r\in [0,t]}\|\mathbf{u}^{\varepsilon}(r\wedge \tau_\varepsilon)
-\mathbf{u}^{\varepsilon_0}(r\wedge \tau_\varepsilon)
\|^2_{L^2(\mathbb{R}^2)}\right)
$$
\begin{align}\label{up8}
+ 4c_3 ^2c_4^2
|\varepsilon
 -\varepsilon_0|^2\left(TR\sum_{k=1}^\infty
 \|
 \mathbf{f}_k\|^2_{L^\infty(\mathbb{R}^2)}
 +
 T\sum_{k=1}^\infty\|\mathbf{f}_k\|^2_{L^2(\mathbb{R}^2)}\right),
\end{align}
where $c_4>0$ is a constant.
By \eqref{up7}-\eqref{up8} we have for all $t\in [0,T]$,  $\varepsilon, \varepsilon_0 \in [0,1]$,
$$
\sup_{\|
\mathbf{u}_0\|_{H^1(\R^2)}
\le L}
\mathrm{E}\left(\sup_{r\in [0,t]}\|\mathbf{u}^{\varepsilon}(r\wedge \tau_\varepsilon)
-\mathbf{u}^{\varepsilon_0}(r\wedge \tau_\varepsilon)
\|^2_{L^2(\mathbb{R}^2)}\right)
$$
$$
\leq  2 {|\varepsilon-\varepsilon_0|^2}{}
 c_3 TR\left(\sum_{k=1}^\infty
\|  \mathbf{f}_k
 \|^2_{L^2(\mathbb{R}^2)}\right)^2
$$
$$
+(8c_3+8c_3^2c_4^2)|\varepsilon-\varepsilon_0|^2
\left(TR\sum_{k=1}^\infty
\|  \mathbf{f}_k
 \|_{L^\infty(\mathbb{R}^2)}^2+T\sum_{k=1}^\infty
\|  \mathbf{f}_k
 \|_{L^2(\mathbb{R}^2)}^2\right),
$$
 and hence  
 \be\label{up9}
 \lim_{\varepsilon \to \varepsilon_0}
\sup_{\|
\mathbf{u}_0\|_{H^1(\R^2)}
\le L}
\mathrm{E}\left(\sup_{r\in [0,t]}\|\mathbf{u}^{\varepsilon}(r\wedge \tau_\varepsilon)
-\mathbf{u}^{\varepsilon_0}(r\wedge \tau_\varepsilon)
\|^2_{L^2(\mathbb{R}^2)}\right)
=0.
\ee 

 Note that  
  for all
  $\varepsilon, \varepsilon_0 \in [0,1]$,
  by \eqref{ucsp p0a} we have
 \begin{align*}
&\sup_{
\|\mathbf{u}_0\|_{H^1(\R^2)}
\le L} 
\mathrm{P}\left(\sup_{t\in [0,T]}\|\mathbf{u}^\varepsilon(t, \mathbf{u}_0)-\mathbf{u}^{\varepsilon_0}(t, \mathbf{u}_0)\|_{L^2(\mathbb{R}^2)}\geq \eta\right)\nonumber\\
&\leq
\sup_{
\|\mathbf{u}_0\|_{H^1(\R^2)}
\le L} 
\mathrm{P}\left(\left\{\sup_{t\in [0,T]}\|\mathbf{u}^\varepsilon(t, \mathbf{u}_0)-\mathbf{u}^{\varepsilon_0}(t, \mathbf{u}_0)\|_{L^2(\mathbb{R}^2)}\geq \eta\right\}\bigcap \left\{ \tau_\varepsilon \ge T\right\}\right)\nonumber\\
&\quad+
\sup_{
\|\mathbf{u}_0\|_{H^1(\R^2)}
\le L} 
\mathrm{P}\left(\left\{\sup_{t\in [0,T]}\|\mathbf{u}^\varepsilon(t, \mathbf{u}_0)-\mathbf{u}^{\varepsilon_0}(t, \mathbf{u}_0)\|_{ L^2(\mathbb{R}^2)}\geq \eta\right\}\bigcap \left\{\tau_\varepsilon
< T\right\}\right)\nonumber\\
&\leq 
\sup_{
\|\mathbf{u}_0\|_{H^1(\R^2)}
\le L} 
\mathrm{P}\left(\sup_{t\in [0,  T]}\|\mathbf{u}^\varepsilon(t
\wedge \tau_\varepsilon , \mathbf{u}_0)-\mathbf{u}^{\varepsilon_0}(t
\wedge
\tau_\varepsilon
, \mathbf{u}_0)\|_{
L^2(\mathbb{R}^2)
}\geq \eta\right)+\mathrm{P}\left( \tau_\varepsilon
<  T\right)\nonumber\\
&\leq \frac{1}{\eta^2}
\sup_{
\|\mathbf{u}_0\|_{H^1(\R^2)}
\le L} 
\mathrm{E}\left(\sup_{t\in [0, T]}\|\mathbf{u}^\varepsilon(t
\wedge \tau_\varepsilon
, \mathbf{u}_0)-\mathbf{u}^{\varepsilon_0}(t
\wedge \tau_\varepsilon , \mathbf{u}_0)\|^2_{L^2(\mathbb{R}^2)}\right)
+\epsilon'  .
\end{align*}
First taking the limit as 
  $\varepsilon\to
 \varepsilon_0$  and then
 $\epsilon' \to 0$, by \eqref{up9} we obtain
 \eqref{ucsp 1},
 which completes the proof.
\end{proof}

  In order to show that the weak limit of 
 invariant measures of \eqref{1.3} is invariant,
 we recall   the following convergence 
 of solutions  with respect to initial data
 from \cite[Lemma 2.6]{Q1}.

\begin{lemma}\label{lem5.3}
 If \eqref{1.1}
holds, and   $ \mathbf{u}^n_0\rightarrow  \mathbf{u}_0$ in $H^1(\mathbb{R}^2)$, then for
every  $\varepsilon \in [0,1]$, 
$T>0$ and $p\ge 1$, 
  the solutions $\mathbf{u}^\varepsilon(t, \mathbf{u}^n_0)$ and
   $\mathbf{u}^\varepsilon(t, \mathbf{u}_0)$ of \eqref{1.3}  satisfy:
$$\lim_{n\rightarrow \infty}\
\mathrm{E}\left(\sup_{t\in [0,T]}\|\mathbf{u}^\varepsilon(t, \mathbf{u}^n_0)-\mathbf{u}^{\varepsilon}(t, \mathbf{u}_0)\|^p
_{L^2(\mathbb{R}^2) } \right)=0.
$$
  \end{lemma}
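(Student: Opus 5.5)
We argue by the standard $L^2$-energy estimate for the difference of two solutions with the same $\varepsilon$, localized by a stopping time. Let $\mathbf{w}_n(t)=\mathbf{u}^\varepsilon(t,\mathbf{u}^n_0)-\mathbf{u}^\varepsilon(t,\mathbf{u}_0)$. Since $\mathbf{u}^n_0\to\mathbf{u}_0$ in $H^1(\R^2)$, we may assume $\|\mathbf{u}^n_0\|_{H^1}\le L$ for all $n$. Lemma 2.5 of \cite{Q1} (as used in the proof of Lemma \ref{ucsp}) then gives uniform bounds on $\mathrm{E}\big(\sup_{t\le T}\|\cdot\|^{2q}_{H^1}+(\int_0^T\|\cdot\|^2_{H^2}ds)^q\big)$ for both solutions, for every $q\ge1$.

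\textbf{Step 1: stopping time and the difference equation.} Given $R>0$, we define $\tau_n^R$ as the first time at which either solution has $\|\cdot\|^2_{H^1}+\int_0^t\|\cdot\|^2_{H^2}ds>R$. By Chebyshev's inequality, $\sup_n\mathrm{P}(\tau_n^R<T)\to0$ as $R\to\infty$.

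\textbf{Step 2: Itô's formula for $\|\mathbf{w}_n\|^2_{L^2}$.} The equation for $\mathbf{w}_n$ is \eqref{ucsp p4} with $\varepsilon=\varepsilon_0$, so all the $(\varepsilon-\varepsilon_0)$ terms vanish and the additive part $\mathbf{f}_k$ of the noise cancels. The term $(\mathbf{w}_n\times\Delta\mathbf{u}^\varepsilon,\mathbf{w}_n)$ is zero. The cross term is estimated exactly as in \eqref{5.5}, the cubic term has a sign as in \eqref{5.6}, and the Stratonovich corrector and Itô correction are bounded by $C\sum_k\|\mathbf{f}_k\|^2_{L^\infty}\|\mathbf{w}_n\|^2_{L^2}$. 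The martingale term $\sum_k\int(\mathbf{w}_n\times\mathbf{f}_k,\mathbf{w}_n)dW_k$ is identically zero, because $(\mathbf{w}\times\mathbf{f})\cdot\mathbf{w}=0$. Consequently, pathwise up to $\tau_n^R$,
$$\|\mathbf{w}_n(t)\|^2_{L^2}\le\|\mathbf{u}^n_0-\mathbf{u}_0\|^2_{L^2}+\int_0^t\|\mathbf{w}_n(s)\|^2_{L^2}\,h(s)\,ds,$$
where $h=c\|\nabla\mathbf{u}^\varepsilon(\cdot,\mathbf{u}_0)\|^2_{L^2}\|\mathbf{u}^\varepsilon(\cdot,\mathbf{u}_0)\|^2_{H^2}+C$.

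\textbf{Step 3: Gronwall and removal of the localization.} Gronwall's lemma gives
$$\sup_{t\le T\wedge\tau_n^R}\|\mathbf{w}_n\|^2_{L^2}\le e^{cR^2+CT}\,\|\mathbf{u}^n_0-\mathbf{u}_0\|^2_{L^2}\to0$$
deterministically. Hence
$$\mathrm{E}\sup_{t\le T}\|\mathbf{w}_n\|^p_{L^2}\le e^{p(cR^2+CT)/2}\|\mathbf{u}^n_0-\mathbf{u}_0\|^p_{L^2}+\mathrm{E}\big(\sup_{t\le T}\|\mathbf{w}_n\|^p_{L^2}\,1_{\{\tau_n^R<T\}}\big).$$
By Cauchy--Schwarz and the uniform $2p$-moment bound, the last term is at most $C\,\mathrm{P}(\tau_n^R<T)^{1/2}$, uniformly in $n$. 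We therefore let $n\to\infty$ first and then $R\to\infty$.

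The main obstacle is justifying Itô's formula for $\|\mathbf{w}_n\|^2_{L^2}$ given the low regularity in \eqref{wang3}. This is not a real difficulty: $\mathbf{w}_n\in L^2(0,T;H^2)$ and the drift lies in $L^r(0,T;L^2)\subset L^1(0,T;H^{-1})$ paired with $\mathbf{w}_n\in L^2(0,T;H^1)$. The Krylov framework \cite{kry1981} in the Gelfand triple $H^1\subset L^2\subset H^{-1}$ therefore applies, just as it did for Lemma \ref{ucsp}.
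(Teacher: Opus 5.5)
Your proof is correct and follows the standard argument. The paper itself only quotes this result from \cite[Lemma 2.6]{Q1}, but its proof of Lemma \ref{ucsp} runs your stopping-time/It\^{o}/Gronwall scheme, and in your equal-$\varepsilon$ case it simplifies as you note: the additive noise cancels and $(\mathbf{w}_n\times\mathbf{f}_k,\mathbf{w}_n)=0$ removes the martingale, so no BDG step is needed and Gronwall holds pathwise.

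One minor fix to your last paragraph: a drift in $L^1(0,T;H^{-1})$ tested against $\mathbf{w}_n\in L^2(0,T;H^1)$ is not Krylov's hypothesis. The It\^{o} formula for $\|\mathbf{w}_n\|^2_{L^2(\R^2)}$ is still justified, in either of two ways: the drift lies in $L^1(0,T;L^2(\R^2))$, so the ordinary Hilbert-space It\^{o} formula in $L^2(\R^2)$ applies; or $\mathbf{u}\times\Delta\mathbf{u}=\mathrm{div}(\mathbf{u}\times\nabla\mathbf{u})\in L^4(0,T;H^{-1}(\R^2))$, so the drift is in $L^2(0,T;H^{-1}(\R^2))$ as Krylov's framework requires.
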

      
We are now in a position to prove the 
main result of this paper;
 that is, Theorem \ref{th1}.

{\bf \textit{Proof of Theorem \ref{th1}}}. 
(i). 
 By Lemma \ref{tigu1} we
know    
the set $\bigcup_{\varepsilon\in [0,1]}\mathcal{I}^\varepsilon$ is tight
in $H^1(\mathbb{R}^2)$,  and hence (i) is valid.

(ii).  Suppose that $\mu$ is a probability measure
on  $H^1(\mathbb{R}^2)$,
$\varepsilon_n \to \varepsilon_0\in [0,1]$
and $\mu^{\varepsilon_n}
\in \mathcal{I}^{\varepsilon_n}$
 such that
$\mu^{\varepsilon_n}
\to \mu$ weakly.
We need to prove $\mu$ is invariant;
that is, $\mu \in \mathcal{I}^{\varepsilon_0}$.

 Let  $UC_b(L^2(\mathbb{R}^2))$
 be
  the Banach space of all bounded
uniformly continuous functions defined on $ L^2(\mathbb{R}^2)$ with the uniform norm
$\| g \|_{UC_b(L^2(\mathbb{R}^2)) }=\sup_{
v\in  L^2(\mathbb{R}^2) }
|g(v)|$.
The space $UC_b(H^1 (\mathbb{R}^2))$
is defined similarly.
Note that $UC_b(L^2(\mathbb{R}^2))$
is a subset of $UC_b(H^1 (\mathbb{R}^2))$.

Let    $g\in UC_b(L^2(\mathbb{R}^2))$ be fixed.
 Since $\{\mu^{\varepsilon_n}\} _{n=1}^\infty$ is tight in $H^1(\mathbb{R}^2)$, for every $\epsilon'>0$, 
 there exists a compact subset
 $\mathcal{K}
 =\mathcal{K}(\epsilon')$ in $ H^1(\mathbb{R}^2)$ 
   such that
\be\label{wand 1}
\mu^{\varepsilon_n}
( H^1(\mathbb{R}^2)\setminus \mathcal{K})\leq \frac{\epsilon'}{2 \| g \|_{UC_b(L^2(\mathbb{R}^2)) }  +1}.
\ee

Since 
   $g\in UC_b(L^2(\mathbb{R}^2))$, we have 
    $g\in UC_b(H^1(\mathbb{R}^2))$,
    and hence by   the invariance of 
    $\mu^{\varepsilon_n} $ in $H^1(\mathbb{R}^2)$ we obtain 
$$
 {\int_{H^1(\mathbb{R}^2)}\mathrm{E}(g(\mathbf{u}^{\varepsilon_0}(t, \mathbf{u}_0))) 
 \mu^{\varepsilon_n}
 (d\mathbf{u}_0)-\int_{H^1(\mathbb{R}^2)}g(\mathbf{u}_0)
 \mu^{\varepsilon_n}
 (d\mathbf{u}_0)}
$$
$$
=\int_{H^1(\mathbb{R}^2)}\mathrm{E}(g(\mathbf{u}^{\varepsilon_0}(t, \mathbf{u}_0)))
\mu^{\varepsilon_n}
(d\mathbf{u}_0)-\int_{H^1(\mathbb{R}^2)}\mathrm{E}(g(\mathbf{u}^{\varepsilon_n}
(t, \mathbf{u}_0)))
\mu^{\varepsilon_n}
(d\mathbf{u}_0)
$$
$$
=\int_{H^1(\mathbb{R}^2)}\mathrm{E}\big(g(\mathbf{u}^{\varepsilon_0}(t, \mathbf{u}_0))-g(\mathbf{u}^{\varepsilon_n}
(t, \mathbf{u}_0))\big)
\mu^{\varepsilon_n}
(d\mathbf{u}_0)
$$
$$
=\int_{H^1(\mathbb{R}^2)\setminus \mathcal{K}}\mathrm{E}\big(g(\mathbf{u}^{\varepsilon_0}(t, \mathbf{u}_0))-g(\mathbf{u}^
{\varepsilon_n}
(t, \mathbf{u}_0))\big)
\mu^{\varepsilon_n}
(d\mathbf{u}_0)
$$
$$
+\int_{\mathcal{K}}\mathrm{E}\big(g(\mathbf{u}^{\varepsilon_0}(t, \mathbf{u}_0))-g(\mathbf{u}^{\varepsilon_n}
(t, \mathbf{u}_0))\big)
\mu^{\varepsilon_n}
(d\mathbf{u}_0)
$$
$$
=\int_{ \mathcal{K}}\mathrm{E}\left(1_{(dist_{L^2(\mathbb{R}^2)}(\mathbf{u}^{\varepsilon_0}(t, \mathbf{u}_0), \mathbf{u}^{\varepsilon_n}
(t, \mathbf{u}_0))\geq \eta)}\big(g(\mathbf{u}^{\varepsilon_0}(t, \mathbf{u}_0))-g(\mathbf{u}^{\varepsilon_n}
(t, \mathbf{u}_0))\big)\right) \mu^{\varepsilon_n} (d\mathbf{u}_0)
$$
$$
+\int_{\mathcal{K}}\mathrm{E}\left(1_{(dist_{L^2(\mathbb{R}^2)}(\mathbf{u}^{\varepsilon_0}(t, \mathbf{u}_0), \mathbf{u}^{\varepsilon_n}
(t, \mathbf{u}_0))< \eta)}\big(g(\mathbf{u}^{\varepsilon_0}(t, \mathbf{u}_0))-g(\mathbf{u}^{\varepsilon_n}
(t, \mathbf{u}_0))\big)\right) 
\mu^{\varepsilon_n}
(d\mathbf{u}_0)
$$
\begin{align}\label{in}
+\int_{H^1(\mathbb{R}^2)\setminus \mathcal{K}}\mathrm{E}\big(g(\mathbf{u}^{\varepsilon_0}(t, \mathbf{u}_0))-g(\mathbf{u}^{
\varepsilon_n}
(t, \mathbf{u}_0))\big)
\mu^{\varepsilon_n}
(d\mathbf{u}_0).
\end{align}
 
For the third term on the right-hand side of \eqref{in}
by \eqref{wand 1}
 we have
\begin{align}\label{in0}\int_{H^1(\mathbb{R}^2)\setminus \mathcal{K}}\mathrm{E}\big(g(\mathbf{u}^{\varepsilon_0}(t, \mathbf{u}_0))
-g(\mathbf{u}^
{\varepsilon_n} (t, \mathbf{u}_0))\big)\mu^{\varepsilon_n}
(d\mathbf{u}_0)\leq \epsilon'.
\end{align}
 By Lemma \ref{ucsp} we 
 infer that  for every $\epsilon'>0$
 and  $\eta>0$, there exists $N
 =N(\epsilon', \eta) \in \mathbb{N}$ such that  
 for all $n \ge N$,
$$
\sup_{\mathbf{u}_0\in \mathcal{K}}\mathrm{P}\left(dist_{L^2(\mathbb{R}^2)}(\mathbf{u}^{\varepsilon_n}(t, \mathbf{u}_0), 
\mathbf{u}^{\varepsilon_0}
(t, \mathbf{u}_0))\geq \eta\right)\leq \frac{\epsilon'}{
2\|g\|_{UC_b(L^2(
\R^2))} +1}.
$$
Then for the first term on the right-hand side of \eqref{in} we have
$$\int_{ \mathcal{K}}\mathrm{E}\left(1_{(dist_{L^2(\mathbb{R}^2)}(\mathbf{u}^{\varepsilon_0}(t, \mathbf{u}_0), \mathbf{u}^{\varepsilon_n}
(t, \mathbf{u}_0))\geq \eta)}\big(g(\mathbf{u}^{\varepsilon_0}(t, \mathbf{u}_0))-g(\mathbf{u}^{\varepsilon_n}
(t, \mathbf{u}_0))\big)\right)\mu^{\varepsilon_n}(d\mathbf{u}_0)
$$
\begin{align}\label{in1}
\leq 2 \|g\|_{UC_b(L^2(
\R^2))} \sup_{\mathbf{u}_0\in \mathcal{K}}\mathrm{P}\left(dist_{L^2(\mathbb{R}^2)}(\mathbf{u}^{\varepsilon_0}(t, \mathbf{u}_0), \mathbf{u}^{\varepsilon_n}
(t, \mathbf{u}_0))\geq \eta\right)\leq \epsilon'.
\end{align}
Since $g\in UC_b(L^2(\mathbb{R}^2))$, for every $\epsilon'>0$ there exists $\eta>0$ such that 
$|g(v_1)-g(v_2)|\leq \epsilon'$ for all $\
v_1, v_2 \in L^2(\mathbb{R}^2)$ with
$dist_{L^2(\mathbb{R}^2)}(v_1,v_2)\leq \eta$, 
and thus   for the second term on the right-hand side of \eqref{in}  we have
\begin{align}\label{in2}
\int_{\mathcal{K}}\mathrm{E}\left(1_{(dist_{L^2(\mathbb{R}^2)}(\mathbf{u}^{\varepsilon_0}(t, \mathbf{u}_0), \mathbf{u}^{\varepsilon_n}
(t, \mathbf{u}_0))< \eta)}\big(g(\mathbf{u}^{\varepsilon_0}(t, \mathbf{u}_0))-g(\mathbf{u}^{\varepsilon_n}
(t, \mathbf{u}_0))\big)\right)\mu^{\varepsilon_n}
(d\mathbf{u}_0)\leq \epsilon'.
\end{align}
Then by  \eqref{in0}-\eqref{in2} we
find that for all
$n\ge N$,   
  \begin{align*}
\left|\int_{H^1(\mathbb{R}^2)}\mathrm{E}(g(\mathbf{u}^{\varepsilon_0}(t, \mathbf{u}_0)))\mu
^{\varepsilon_n}
(d\mathbf{u}_0)-\int_{H^1(\mathbb{R}^2)}g(\mathbf{u}_0)\mu^
{\varepsilon_n}
(d\mathbf{u}_0)\right|\leq 3\epsilon',
\end{align*} 
and hence 
  \be \label{e}
 \lim_{n\to \infty}
 \left ( \int_{H^1(\mathbb{R}^2)}
 \mathrm{E}(g(\mathbf{u}^{\varepsilon_0}
 (t, \mathbf{u}_0)))\mu
^{\varepsilon_n}
(d\mathbf{u}_0)-
\int_{H^1(\mathbb{R}^2)}g(\mathbf{u}_0)
\mu^
{\varepsilon_n}
(d\mathbf{u}_0)\right)
=0. 
\ee

If $\mathbf{u}^n_0
\to \mathbf{u}_0$ in $H^1(\R^2)$, then
{by  Lemma \ref{lem5.3} and the fact  $g\in UC_b(L^2(\mathbb{R}^2))$ we infer 
that  for every $t\geq 0$,
$$
g(\mathbf{u}^{\varepsilon_0}(t, \mathbf{u}^n_0))\rightarrow g(\mathbf{u}^{\varepsilon_0}(t, \mathbf{u}_0))  ~{\rm in ~probability},
$$
which along with the dominated convergence
theorem  implies that
$  \mathrm{E}(g(\mathbf{u}^{\varepsilon_0}(t, \mathbf{u}_0)))$ 
is a  bounded
continuous function
 with respect to
$ \mathbf{u}_0$ in $H^1(\R^2)$.
Since $\mu^{\varepsilon_n}
\to  \mu^{\varepsilon_0}$
weakly  in $H^1(\mathbb{R}^2)$, we get 
\begin{align}\label{e1}
\int_{H^1(\mathbb{R}^2)}\mathrm{E}(g(\mathbf{u}^{\varepsilon_0}(t, \mathbf{u}_0)))\mu^{\varepsilon_n}
(d\mathbf{u}_0)\rightarrow \int_{H^1(\mathbb{R}^2)}\mathrm{E}(g(\mathbf{u}^{\varepsilon_0}(t, \mathbf{u}_0)))\mu^{\varepsilon_0}(d\mathbf{u}_0),
\end{align}}
and
\begin{align}\label{e2}\int_{H^1(\mathbb{R}^2)}g(\mathbf{u}_0)\mu^
{\varepsilon_n}
(d\mathbf{u}_0)\rightarrow \int_{H^1(\mathbb{R}^2)}g(\mathbf{u}_0)\mu^{\varepsilon_0}(d\mathbf{u}_0).
\end{align}

It follows from  \eqref{e}-\eqref{e2} that
for all  $g\in UC_b(L^2(\mathbb{R}^2))$,
\be\label{e3*}
\int_{H^1(\mathbb{R}^2)}\mathrm{E}(g(\mathbf{u}^{\varepsilon_0}(t, \mathbf{u}_0)))\mu^{\varepsilon_0}(d\mathbf{u}_0)=\int_{H^1(\mathbb{R}^2)}g(\mathbf{u}_0)\mu^{\varepsilon_0}(d\mathbf{u}_0).
\ee
  
If we can prove \eqref{e3*} is valid for all
 $ {g}\in UC_b(H^1(\mathbb{R}^2))$,
 then we know
 $\mu^{\varepsilon_0}$ is an  invariant measure of \eqref{1.3} with $\varepsilon=\varepsilon_0$ in $H^1(\mathbb{R}^2)$, which will
 complete the proof.

  Given $g\in UC_b(H^1(\mathbb{R}^2))$
  and $\delta>0$,  denote by
  $$
   g_\delta
   (v) = g ((I-\delta \Delta)^{-\frac 12} v),
   \quad \forall \ v\in L^2 (\R^2).
   $$
   Then for every $\delta>0$,
   $ g_\delta \in 
  UC_b(L^2(\mathbb{R}^2))$, and hence 
  by \eqref{e3*} we get
\be\label{e4}
\int_{H^1(\mathbb{R}^2)}\mathrm{E}(g_\delta (\mathbf{u}^{\varepsilon_0}(t, \mathbf{u}_0)))\mu^{\varepsilon_0}(d\mathbf{u}_0)=\int_{H^1(\mathbb{R}^2)}g_\delta (\mathbf{u}_0)\mu^{\varepsilon_0}(d\mathbf{u}_0).
\ee
Note that if $v\in H^1(\R^2)$, then
$(I-\delta \Delta)^{-\frac 12} v
\to v$ in $H^1(\R^2)$ as $\delta \to 0$, and hence
$g_\delta v \to g(v)$,
which along with \eqref{e4}
and the dominated convergence theorem
implies that  for all  $g\in UC_b(H^1(\mathbb{R}^2))$,
$$
\int_{H^1(\mathbb{R}^2)}\mathrm{E}(g(\mathbf{u}^{\varepsilon_0}(t, \mathbf{u}_0)))\mu^{\varepsilon_0}(d\mathbf{u}_0)=\int_{H^1(\mathbb{R}^2)}g(\mathbf{u}_0)\mu^{\varepsilon_0}(d\mathbf{u}_0).
$$
This  shows 
  $\mu^{\varepsilon_0}$ is an invariant measure of  \eqref{1.3} with $\varepsilon=\varepsilon_0$ in $H^1(\mathbb{R}^2)$, and thus 
  completes the proof of Theorem \ref{th1}.

\section*{Data availability}
Data sharing is not applicable to this article as no datasets were generated or analyzed during the current
study.

\section*{Acknowledgments}
D. Huang is supported by the National Natural Science Foundation of China (Grant No. 12471228). Z. Qiu is supported by the National Natural Science Foundation of China
(Grant No. 12401305), the National Science Foundation for Colleges and Universities in
Jiangsu Province (Grant No. 24KJB110011) and the National Science Foundation of Jiangsu
Province (Grant No. BK20240721).


\section*{Statements and Declarations}
 On behalf of all authors, the corresponding author states that
there is no conflict of interest.

\end{document}